\newtheorem{theorem}{Theorem}[section]
\newtheorem{lemma}[theorem]{Lemma}
\newtheorem{definition}[theorem]{Definition}
\newtheorem{assumption}[theorem]{Assumption}
\newtheorem{proposition}[theorem]{Proposition}
\theoremstyle{remark}
\newtheorem{remark}[theorem]{Remark}
\renewenvironment{proof}[1][Proof]{ {\itshape \noindent {#1.}} }{$\Box$
\medskip}
\numberwithin{equation}{section}
\newcommand{\R}{\mathbb{R}}
\newcommand{\Pb}{\mathbb{P}}
\newcommand{\E}{\mathbb{E}}
\newcommand{\C}{\mathcal{C}}
\newcommand{\X}{\mathcal{X}}
\newcommand{\V}{\mathbb{V}}
\newcommand{\RR}{\mathcal{R}}
\newcommand{\eps}{\varepsilon}
\def\les{\lesssim}
\begin{document}

\title{Weak Convergence Approach for Parabolic Equations\\ with Large, Highly Oscillatory, Random Potential}
\author{Yu Gu\thanks{Department of Applied Physics \& Applied
Mathematics, Columbia University, New York, NY 10027 (yg2254@columbia.edu; gb2030@columbia.edu)}  \and Guillaume Bal\footnotemark[1]}

\maketitle

\begin{abstract}
This paper concerns the macroscopic behavior of solutions to parabolic equations with large, highly oscillatory, random potential. When the correlation function of the random potential satisfies a specific integrability condition, we show that the random solution converges, as the correlation length of the medium tends to zero, to the deterministic solution of a homogenized equation in dimension $d\geq3$. Our derivation is based on a Feynman-Kac probabilistic representation and the Kipnis-Varadhan method applied to weak convergence of Brownian motions in random sceneries. For sufficiently mixing coefficients, we also provide an optimal rate of convergence to the homogenized limit using  a quantitative martingale central limit theorem. As soon as the above integrability condition fails, the solution is expected to remain stochastic in the limit of a vanishing correlation length. For a large class of potentials given as functionals of Gaussian fields, we show the convergence of solutions to stochastic partial differential equations (SPDE) with multiplicative noise. The Feynman-Kac representation and the corresponding weak convergence of Brownian motions in random sceneries allows us to explain the transition from deterministic to stochastic limits as a function of the correlation function of the random potential.
\end{abstract}

\section{Introduction}

Solutions of partial differential equations with small scale structures arise in many aspects of physical and applied sciences. Homogenization theory has proved to be useful, both from the theoretical and numerical points of view, to provide macroscopic descriptions for such solutions. We consider here the setting of a parabolic equation with a large and highly oscillatory random potential. One of the salient features of such models is that the properties of the limiting macroscopic model strongly depend on the correlation properties of the random medium. When an integrability condition on the correlation function is met, then the stochastic solution converges in the limit of vanishing correlation length to a deterministic, homogenized solution. However, when that condition is not satisfied, the random solution remains stochastic in that limit and converges to the solution of a stochastic partial differential equation (SPDE) with multiplicative noise. The main objective of this paper is to provide a derivation of such results and an understanding of the transition from deterministic to stochastic limits from a probabilistic point of view. When the solution converges to a deterministic limit, we also derive optimal rates of convergence provided the potential satisfies certain mixing conditions.

Similar such equations have been analyzed recently. When the random potential is Gaussian, a Duhamel infinite series expansions and combinatorial techniques allows us to understand such a convergence for a relatively large class of parabolic equations including parabolic Anderson and Schr\"odinger models; see \cite{bal2009convergence,bal2010homogenization,B-AMRX-11,ZB-CMS-13,ZB-SD-13}. These explicit methods do not seem to extend to non-Gaussian potentials. In the one-dimensional setting of the heat equation, the convergence to a stochastic limit in the mixing case was addressed in \cite{pardoux2006homogenization} using the same probabilistic (Feynman-Kac formula) representation we consider in this paper. The convergence to deterministic limits for time-dependent potentials (not considered in this paper) has been considered in \cite{pardoux2012homogenization,hairer2013random}.

In this paper, we adapt the Feynman-Kac approach to analyze a parabolic equation in dimension $d\geq 3$ of the form 
\begin{equation}
\partial_t u_\eps=\frac12\Delta u_\eps+iV_\eps u_\eps,
\label{eq:mainEq}
\end{equation} where $V_\eps(x)=\eps^{-\gamma}V(x/\eps)$ is a large, time-independent, highly oscillatory, random potential. An imaginary potential is introduced to obtain a uniform bound on the energy of the solution, which considerably simplifies the analysis of exponential functionals of Brownian motion and the passage to the limit as $\eps\to0$. The corresponding heat equation (with $iV_\eps$ replaced by $V_\eps$) might be analyzed using techniques developed in \cite{hairer2013random} but this problem is not considered further here. 
Note that the scalar equation of the form $\partial_t u=\frac12\Delta u+i Vu$ may be recast as the system
\begin{equation}
\partial_t\left(\begin{array}{c}
u_1\\
u_2
\end{array}\right)=\left(\begin{array}{cc}
\frac12\Delta & 0 \\
0 & \frac12\Delta
\end{array}\right)\left(\begin{array}{c}
u_1\\
u_2
\end{array}\right)+\left(\begin{array}{cc}
0 & -V \\
V & 0
\end{array}\right)\left(\begin{array}{c}
u_1\\
u_2
\end{array}\right),
\end{equation}
with $u=u_1+i u_2$. Here, $V$ may model a conservative process of  interaction between two components otherwise satisfying independent diffusions. We obtain \eqref{eq:mainEq} by looking at the long time, large distance asymptotic limit $u_\eps(t,x)=u(t/\eps^2,x/\eps)$ for $\eps\ll 1$. To obtain nontrivial effects from the potential, it suffices to impose on $V$ a weak amplitude $\eps^\zeta$ with $\zeta>0$ to be determined. Deriving the equation of $u_\eps(t,x)$ leads to \eqref{eq:mainEq} with $\gamma=2-\zeta$. We analyze the asymptotic behavior of $u_\eps$  as $\eps\to 0$, and prove homogenization and convergence to SPDE under different assumptions on $V(x)$.

%%%%%%%%%%%%%%%%%%%%%%%%%%%
%%%%%%%%%%%%%%%%%%%%%%%%%%%

There is a large body of literature on stochastic homogenization, starting from the work of Kozlov \cite{kozlov1979averaging} and Papanicolaou-Varadhan \cite{papanicolaou1979boundary}, where elliptic operators of the form $\nabla\cdot a(\frac{.}{\eps})\nabla$ are considered for stationary and ergodic coefficients.  Homogenization results show that as $\eps\to 0$ the elliptic operator converges in an appropriate sense to a homogenized operator with constant coefficients. The rates of convergence are less well understood. Yurinskii \cite{yurinskii1986averaging} provided the first quantitative estimates for the statistical error. Discrete cases have been analyzed in \cite{conlon2000homogenization,gloria2011optimal,marahrens2013annealed, mourrat2012kantorovich}, using analytic and probabilistic approaches respectively. For the fully-nonlinear case, see \cite{caffarelli2010rates,armstrong2013quantitative}.
%Recently, Caffarelli-Souganidis  \cite{caffarelli2010rates} analyzed the fully nonlinear equations and derived error bounds. Gloria-Otto analyzed the discrete case and obtained optimal error estimate in that setting \cite{gloria2011optimal, gloria2012optimal, gloria2013optimal}. Mourrat used a probabilistic approach and a quantitative martingale central limit theorem to analyze the discrete case and obtained error estimate \cite{mourrat2012kantorovich}. 
When $d=1$, an explicit solution is available, which simplifies the analysis of the statistical fluctuations and allows to derive central limits for the random fluctuations, see \cite{bourgeat1999estimates,bal2008random, gu2012random}. In the setting of bounded random potentials, \cite{figari1982mean, bal2008central, bal2011corrector, bal2012corrector} analyzed elliptic equations and derived central limit results. 

From a probabilistic point of view, different realizations of the random differential operator $\nabla\cdot a(\frac{.}{\eps})\nabla$ correspond to families of diffusion processes, so that homogenization may be recast as a problem of  weak convergence of random motions in random environments; see \cite{komorowski2012fluctuations} and the references therein. For the heat equation considered here, our setting is that of a Brownian motion propagating in random sceneries. It is the continuous counterpart of Kesten's model of random walk in random scenery, for which the invariance principle has been proved in \cite{kesten1979limit, bolthausen1989central}.  The weak convergence of Brownian motion  in random scenery is based on the Kipnis-Varadhan approach  \cite{kipnis1986central}. We apply to the homogenization setting the point of view of the medium seen from an observer and their methods of corrector equation and martingale decomposition.  The same probabilistic approach was used in \cite{lejay2001homogenization} to handle equations in more general forms with random potentials written as derivatives of bounded processes.

Theorem \ref{thm:THhomo} below provides a convergence result for the most general class of potentials for which such a convergence is expected; see Assumption \ref{ass:finiteVar} below. Using the probabilistic representation, the difference between the heterogeneous and homogenized solutions is approximately reduced to the Wasserstein distance between martingales and Brownian motions. We use the quantitative martingale central limit theorem developed in \cite{mourrat2012kantorovich} to estimate the Wasserstein distance and obtain the optimal convergence rates when the random potential satisfies additional mixing conditions in Theorem \ref{thm:THcorrector}. The mixing property is only used in moment estimation. While this imposes the constraint that the random potentials be sufficiently short-range-correlated,  we apply the same quantitative martingale central limit theorem and extend the result to long-range-correlated Gaussian potentials; see Theorem \ref{thm:THgaussian} below.

When Assumption \ref{ass:finiteVar} below is not satisfied, we do not expect convergence to a deterministic homogenized solution. Exhibiting all possible macroscopic limits in this case seems to be out of reach. From the analysis of the simpler setting of random fluctuations beyond homogenization \cite{bal2008random,bal2012corrector,gu2012random}, we expect the class of possible limits to be rather large. We consider here a large class of random potentials with covariance function decaying sufficiently slowly so that Assumption \ref{ass:finiteVar} is violated and prove a result of convergence to SPDE in Theorem \ref{thm:THspde}. A sharp transition to stochasticity is thus observed beyond Assumption \ref{ass:finiteVar}. 
%It is different from the $1-$d case in \cite{pardoux2006homogenization}, where SPDE appears in the limit as well, since they assumed the mixing condition which falls into the short-range-correlation setting. 
In the long-range-correlation setting, these results relate to limit theorems of sum of strongly correlated random variables, where non-Gaussian limit might appear in certain circumstances \cite{taqqu1975weak}. Our random coefficients are chosen as functionals of Gaussian processes and we obtain a SPDE driven by multiplicative Gaussian noise in the limit. Similar type of limiting equation is analyzed in \cite{hu2011feynman} by Feynman-Kac formula. In \cite{komorowski2010asymptotic}, the heat equation with long-range correlated Gaussian potential is studied with a similar type of limiting equation as in \cite{hu2011feynman}.

 The rest of paper is organized as follows. We state our main results in Section \ref{sec:result} and discuss possible extensions in Section \ref{sec:conclu}. We then prove convergence to homogenized limit and error estimate under different assumptions in Section \ref{sec:homo}. The result of convergence to SPDE is proved in Section \ref{sec:spde}. We present some technical lemmas in the Appendix. 
 
 Here are notations used throughout the paper. In the product probability space, we use $\E$ to denote the expectation only with respect to random coefficients and $\E_B$ the expectation only with respect to the Brownian motion starting from the origin. Joint expectation is denoted by $\E\E_B$.  $a\les b$ stands for $a\leq Cb$ for some $\eps-$independent constant $C>0$. We use $a\wedge b=\min(a,b)$. $N(\mu,\sigma^2)$ is the Gaussian distribution with mean $\mu$ and variance $\sigma^2$, and $q_t(x)$ denotes the density function of $N(0,t)$. When we write $\Psi(r)\les 1\wedge r^{-\beta}$ for any $\beta>0$, the constant of proportionality might depend on $\beta$.

\section{Main results}
\label{sec:result}

We rely on the Feynman-Kac representation for the solution to \eqref{eq:mainEq} in dimension $d\geq 3$. Assuming the initial condition $u_\eps(0,x)=f(x)$ for $f\in \C_b(\R^d)$, the Feynman-Kac solution is given by
\begin{equation}
u_\eps(t,x)=\E_B\{ f(x+B_t)\exp(i\int_0^t V_\eps(x+B_s)ds)\}.
\label{eq:FKrep}
\end{equation}
Without any regularity assumption on $V_\eps$, \eqref{eq:mainEq} is not always solvable in the classical sense, and the solution given by \eqref{eq:FKrep} is not necessarily a classical solution. In Propositon \ref{prop:wkSolu}, we show it is indeed a weak solution almost surely provided that $V_\eps(x)=\eps^{-\gamma}V(x/\eps)$ for some random potential $V(x)$ that has locally bounded sample path.

Since $V(x)$ may be unbounded, proving uniqueness of the solution to \eqref{eq:mainEq} is a difficult task. Such a task becomes easy when the equation is posed on a bounded domain since $V$ is then bounded almost surely. But calculations with the corresponding Brownian motion on bounded domains involve standard complications which we wish to avoid here. When we refer to "the" solution to \eqref{eq:mainEq}, we therefore mean the weak solution given by the Feynman-Kac probabilistic representation in the rest of the paper.

In the following, we state the main results of homogenization and convergence to SPDE respectively.

\subsection{Convergence to homogenized limit and error estimate}

Let $(\Omega,\mathcal{F},\Pb)$ be a \emph{random medium} associated with a group of measure-preserving, ergodic transformation $\{\tau_x,x\in\R^d\}$. Let $\V\in L^2(\Omega)$ with $\int_\Omega \V(\omega)\Pb(d\omega)=0$. Define $V(x,\omega)=\V(\tau_x\omega)$ and we consider the equation when $d\geq 3$:
\begin{equation}
\partial_t u_\eps(t,x,\omega)=\frac12\Delta u_\eps(t,x,\omega)+i\frac{1}{\eps}V(\frac{x}{\eps},\omega)u_\eps(t,x,\omega),
\label{eq:mainEqhomo}
\end{equation}
with initial condition $u_\eps(0,x,\omega)=f(x)$ for $f\in \C_b(\R^d)$, i.e., in \eqref{eq:mainEq} we choose $\gamma=1$. For detailed setup of random medium, we refer to e.g. \cite{papanicolaou1979boundary, komorowski2012fluctuations}. We will write $u_\eps(t,x)$ and $V(x)$ from now on.

Let $\{D_k,k=1,\ldots,d\}$ be the $L^2(\Omega)$ generator of $T_x$ defined as $T_xf(\omega)=f(\tau_x\omega)$, and Laplacian operator $L=\frac12\sum_{k=1}^d D_k^2$. We use $\langle.,.\rangle$ to denote the inner product in $L^2(\Omega)$ and $\|.\|$ the $L^2(\Omega)$ norm, and assume that 
\begin{assumption}
\begin{equation}
\langle \V, -L^{-1}\V\rangle<\infty.
\end{equation}
\label{ass:finiteVar}
\end{assumption}

By assuming $T_x$ is strongly continuous in $L^2(\Omega)$, we obtain the spectral resolution
\begin{equation}
T_x=\int_{\R^d}e^{i\xi x}U(d\xi),
\end{equation}
where $U(d\xi)$ is the associated projection valued measure. We assume there is a non-negative power spectrum $\hat{R}(\xi)$ associated with $\V$, i.e., $\hat{R}(\xi)d\xi=(2\pi)^d\langle U(d\xi)\V,\V\rangle$. Then Assumption \ref{ass:finiteVar} is equivalent to
\begin{equation}
\int_{\R^d}\frac{\hat{R}(\xi)}{|\xi|^2}d\xi<\infty.
\end{equation}
We also have that
\begin{equation}
R(x):=\langle T_x\V,\V\rangle=\frac{1}{(2\pi)^d}\int_{\R^d}e^{i\xi\cdot x}\hat{R}(\xi)d\xi.
\end{equation}

Defining $$\sigma^2=2\langle \V, -L^{-1}\V\rangle=\frac{4}{(2\pi)^d}\int_{\R^d}\frac{\hat{R}(\xi)}{|\xi|^2}d\xi,$$ and $u_{hom}(t,x)$ such that
\begin{equation}
\partial_t u_{hom}(t,x)=\frac12\Delta u_{hom}(t,x)-\frac12\sigma^2u_{hom}(t,x)
\label{eq:limitHOMO}
\end{equation}
with same initial condition $u_{hom}(0,x)=f(x)$, we have the following theorem:
%%%%%%%%%
%%%%%%%%%
\begin{theorem}[\emph{Homogenization}]
Under Assumption \ref{ass:finiteVar}, $u_\eps(t,x)\to u_{hom}(t,x)$ in probability as $\eps\to 0$.
\label{thm:THhomo}
\end{theorem}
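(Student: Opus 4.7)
The plan is to establish $L^2(\Pb)$-convergence of $u_\eps$ to the deterministic limit $u_{hom}$; since $|u_\eps|\leq\|f\|_\infty$ uniformly, this implies convergence in probability. Writing
$$\E|u_\eps-u_{hom}|^2=\E|u_\eps|^2-2\Re\bigl(\overline{u_{hom}}\,\E u_\eps\bigr)+|u_{hom}|^2,$$
the task reduces to proving $\E u_\eps\to u_{hom}$ and $\E|u_\eps|^2\to|u_{hom}|^2$. Starting from the Feynman--Kac representation \eqref{eq:FKrep} and applying the Brownian scaling $B_s\mapsto\eps B_{s/\eps^2}$, the exponent becomes
$$Z_\eps(t,B,\omega):=\eps\int_0^{t/\eps^2}\V(\tau_{x/\eps+B_r}\omega)\,dr,$$
and after absorbing $x/\eps$ into $\omega$ by stationarity, $Z_\eps$ is an additive functional of the reversible Markov process $\eta_r:=\tau_{B_r}\omega$ on $\Omega$, whose $L^2(\Omega)$ generator is $L$.

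I would then carry out the Kipnis--Varadhan analysis: for $\delta>0$, introduce the resolvent $\chi_\delta:=(\delta-L)^{-1}\V\in L^2(\Omega)$ and apply Dynkin's formula to $\chi_\delta(\eta_r)$, obtaining
$$\int_0^{t/\eps^2}\V(\eta_r)\,dr=\chi_\delta(\eta_0)-\chi_\delta(\eta_{t/\eps^2})+\delta\int_0^{t/\eps^2}\chi_\delta(\eta_r)\,dr+M^{(\delta)}_{t/\eps^2},$$
where the martingale part is the It\^o integral $M^{(\delta)}_r=\sum_k\int_0^r D_k\chi_\delta(\eta_s)\,dB^k_s$. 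After multiplication by $\eps$, and sending $\eps\to 0$ then $\delta\to 0$: the boundary terms are $O(\eps\|\chi_\delta\|)$ in $L^2(\Pb)$; the drift term has $L^2$-norm $\lesssim\delta\sqrt{t\langle\chi_\delta,-L^{-1}\chi_\delta\rangle}$ (from the variance formula for reversible additive functionals), which vanishes as $\delta\to 0$ by Assumption~\ref{ass:finiteVar} and dominated convergence on the spectral side; and $\eps M^{(\delta)}_{t/\eps^2}\Rightarrow\sigma_\delta W_t$ by the martingale invariance principle, with $\sigma_\delta^2=\sum_k\|D_k\chi_\delta\|^2=2\langle\chi_\delta,-L\chi_\delta\rangle\to\sigma^2$. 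Hence $Z_\eps(t,B,\cdot)\Rightarrow N(0,\sigma^2t)$ under $\Pb$ for a.e.\ path $B$, so $\E[e^{iZ_\eps}]\to e^{-\sigma^2t/2}$ pointwise, and bounded convergence under $\E_B$ yields $\E u_\eps(t,x)\to u_{hom}(t,x)$.

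For the second moment, introduce an independent Brownian copy $B'$ and write
$$\E|u_\eps|^2=\E_B\E_{B'}\E\Bigl\{f(x+B_t)\overline{f(x+B'_t)}\,e^{i(Z_\eps(B)-Z_\eps(B'))}\Bigr\}.$$
The It\^o martingales attached to $B$ and $B'$ are pathwise orthogonal, since $dB^k\cdot dB'^k=0$, so the difference has quadratic variation equal to the sum of the individual ones, which (after multiplication by $\eps^2$) converges in probability to $2\sigma_\delta^2 t$ by the ergodic theorem for $\eta$. The martingale CLT then delivers joint convergence of $(B_t,B'_t,Z_\eps(B)-Z_\eps(B'))$ to $(B_t,B'_t,\sqrt 2\,\sigma W_t)$ with $W$ independent of $(B,B')$, so the inner expectation tends to $e^{-\sigma^2 t}$, and since $\E_B\E_{B'}[f(x+B_t)\overline{f(x+B'_t)}]=|\E_B f(x+B_t)|^2$, one concludes $\E|u_\eps|^2\to|u_{hom}|^2$.

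The main obstacle will be the careful management of the double limit $\eps\to 0$ followed by $\delta\to 0$: the resolvent $\chi_\delta$ is \emph{not} uniformly bounded in $L^2(\Omega)$ as $\delta\to 0$ (Assumption~\ref{ass:finiteVar} only controls $\|\sqrt\delta\chi_\delta\|$ and $\langle\chi_\delta,-L\chi_\delta\rangle$), so making the boundary and drift contributions vanish in the correct order is the delicate step, and is precisely where the spectral integrability $\int\hat R(\xi)/|\xi|^2\,d\xi<\infty$ is essential. The second-moment analysis is then essentially free, thanks to the pathwise orthogonality of It\^o integrals along independent Brownian paths.
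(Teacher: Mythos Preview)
Your strategy---Kipnis--Varadhan decomposition plus martingale CLT---is the right one and is what the paper uses, but there is a genuine error in the direction of quenching, and the paper organizes the argument so as to avoid both this issue and the double limit you flag as the main obstacle.

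The claim ``$Z_\eps(t,B,\cdot)\Rightarrow N(0,\sigma^2t)$ under $\Pb$ for a.e.\ path $B$'' is backwards. The It\^o integral $M^{(\delta)}_r=\sum_k\int_0^r D_k\chi_\delta(\eta_s)\,dB_s^k$ is a martingale in the \emph{Brownian} filtration for fixed $\omega$; the ergodic theorem for the environment process $\eta_s=\tau_{B_s}\omega$ delivers convergence of its quadratic variation a.s.\ in $B$ for $\Pb$-a.e.\ $\omega$, so the martingale CLT yields $\eps M^{(\delta)}_{t/\eps^2}\Rightarrow\sigma_\delta W_t$ weakly in the Brownian variable, for a.e.\ environment $\omega$---not under $\Pb$ for a.e.\ Brownian path. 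A quenched-in-$B$ CLT for the scenery is a different and substantially harder statement, not delivered by this machinery. Your moment approach can be repaired by dropping this claim and using instead the \emph{annealed} joint convergence $(B_t,Z_\eps)\Rightarrow(B_t,\sigma W_t)$ under $\E\E_B$, which does follow from the joint martingale CLT once one checks that the rescaled cross-variation $\eps^2\int_0^{t/\eps^2}D_k\chi_\delta(\eta_s)\,ds\to 0$.

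The paper takes a shorter path: it estimates $\E|u_\eps-u_{hom}|$ directly (no second moment needed), and couples $\lambda=\eps^2$ in the corrector $\Phi_\lambda=(\lambda-L)^{-1}\V$, so that the entire remainder $R_t^\eps$ is controlled by $\sqrt{\lambda\langle\Phi_\lambda,\Phi_\lambda\rangle}\to0$ (Proposition~\ref{prop:remainder1}); this dissolves your double-limit obstacle in a single stroke. Because the martingale integrand $D_k\Phi_\lambda$ then depends on $\eps$, the paper passes to its $L^2(\Omega)$-limit $\eta_k$ (Proposition~\ref{prop:remainder2}) and applies the martingale CLT, quenched in $\omega$, to the \emph{joint} process $\bigl(\eps B_{t/\eps^2},\ \eps\int_0^{t/\eps^2}\sum_k\eta_k(\tau_{B_s}\omega)\,dB_s^k\bigr)$, obtaining for a.e.\ $\omega$ the convergence of $\E_B\{f(x+\eps B_{t/\eps^2})e^{i\mathcal{E}_4}\}$ to $u_{hom}(t,x)$; dominated convergence under $\E$ then finishes.
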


\begin{remark}
Clearly, Assumption \ref{ass:finiteVar} merely ensures $\sigma^2$, i.e., the homogenized constant, to be well-defined. Since $u_\eps$ and $u_{hom}$ are both bounded, moment convergence holds as well. Furthermore, if $f\in L^1(\R^d)$, $|u_\eps(t,.)|, |u_{hom}(t,.)|$ are both bounded by $\mathcal{U}(t,.)\in L^2(\R^d)$, which solves $\partial_t \mathcal{U}=\frac12\Delta \mathcal{U}$ with initial condition $\mathcal{U}(0,x)=|f(x)|$, so $\int_{\R^d}\E\{|u_\eps(t,x)-u_{hom}(t,x)|^2\}dx\to 0$ as $\eps \to 0$.
\end{remark}

We are also interested in the convergence rate of $u_\eps\to u_{hom}$. To give error estimate, one possible assumption we need is the following strongly mixing property of the random potential $V(x)$:

\begin{assumption}
$\E\{V^6(x)\}<\infty$ and there exists a mixing coefficient $\rho(r)$ decreasing in $r\in [0,\infty)$ such that for any $\beta>0$, $\rho(r)\leq C_\beta (1\wedge r^{-\beta})$ for some $C_\beta>0$ and the following bound holds
\begin{equation}
\E\{\phi_1(V)\phi_2(V)\}\leq \rho(r)\sqrt{\E\{\phi_1^2(V)\}\E\{\phi_2^2(V)\}}
\end{equation}
for any two compact sets $K_1,K_2$ with $d(K_1,K_2)=\inf_{x_1\in K_1,x_2\in K_2}\{|x_1-x_2|\}\geq r$ and any random variables $\phi_1(V),\phi_2(V)$ with $\phi_i(V)$ being $\mathcal{F}_{K_i}-$measurable and $\E\{\phi_i(V)\}=0$.
\label{ass:mixing}
\end{assumption}

\begin{remark}
Under Assumption \ref{ass:mixing}, we have $|R(x)|=|\E\{V(0)V(x)\}|\les  1\wedge |x|^{-\beta}$ for any $\beta>0$. Note that \begin{equation}
\sigma^2=\frac{4}{(2\pi)^d}\int_{\R^d}\frac{\hat{R}(\xi)}{|\xi|^2}d\xi=\frac{1}{\pi^{\frac{d}{2}}}\Gamma(\frac{d}{2}-1)\int_{\R^d}\frac{R(x)}{|x|^{d-2}}dx,
\end{equation}
so the strongly mixing assumption implies finiteness of the homogenization constant.
\label{re:decayRx}
\end{remark}

The following is the result of convergence rate for strongly mixing potentials:
%%%%%%%%%%
%%%%%%%%%%
\begin{theorem}[\emph{Error estimate for strongly mixing potentials}]
Under Assumption \ref{ass:mixing}, if $f\in \C_c^\infty(\R^d)$, the following error estimates hold:
\begin{equation}
\E\{|u_\eps(t,x)-u_{hom}(t,x)|\}\leq (1+t)C_{d,f,\rho}\left\{
\begin{array}{ll}
\sqrt{\eps} & d=3,\\
\eps \sqrt{|\log \eps|} & d=4,\\
\eps & d>4.
\end{array} \right.
\end{equation}
\label{thm:THcorrector}
\end{theorem}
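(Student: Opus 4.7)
My strategy combines the Feynman--Kac representation with the Kipnis--Varadhan corrector method and the quantitative martingale central limit theorem of \cite{mourrat2012kantorovich}. Since $u_{hom}(t,x)=e^{-\sigma^2 t/2}\E_B f(x+B_t)$ is itself the Feynman--Kac solution of the homogenized equation, the error takes the form
$$u_\eps-u_{hom}=\E_B\bigl\{f(x+B_t)\bigl(e^{iY_\eps(B,\omega)}-e^{-\sigma^2 t/2}\bigr)\bigr\},\qquad Y_\eps=\int_0^t V_\eps(x+B_s)\,ds.$$
The averaging over Brownian paths inside the expectation is essential, since $|e^{iY_\eps}-e^{-\sigma^2 t/2}|$ does not itself vanish in the limit. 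Passing to $L^2$ via Jensen and expanding $\E|u_\eps-u_{hom}|^2$ using an independent Brownian copy $B'$ as $|\text{bias}|^2+\Var(u_\eps)$ reduces the matter to controlling characteristic functions of the form $\E e^{iY_\eps(B)}$ and $\E e^{i(Y_\eps(B)\pm Y_\eps(B'))}$ against their Gaussian limits, for typical $B$ and $(B,B')$.

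After the diffusive rescaling $B_s=\eps\tilde B_{s/\eps^2}$ and introducing the environment process $\omega_s=\tau_{\tilde B_s}\omega$, I would apply It\^o's formula to the regularized corrector $\phi_\lambda=(\lambda-L)^{-1}\V$ (the unregularized $(-L)^{-1}\V$ belongs to $L^2(\Omega)$ only for $d\geq 5$), obtaining the Kipnis--Varadhan decomposition
$$Y_\eps=\eps\bigl[\phi_\lambda(\omega_0)-\phi_\lambda(\omega_{t/\eps^2})\bigr]+\eps\lambda\int_0^{t/\eps^2}\phi_\lambda(\omega_s)\,ds+\Phi^\eps_t,$$
where $\Phi^\eps_t=\eps\tilde M_{t/\eps^2}$ is a continuous martingale with quadratic variation $\langle\Phi^\eps\rangle_t=\eps^2\int_0^{t/\eps^2}|\nabla\phi_\lambda|^2(\omega_s)\,ds$. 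The two non-martingale pieces contribute $L^2$-errors $\les\eps\|\phi_\lambda\|$ and $\les\lambda\eps^{-1}t\|\phi_\lambda\|$, balanced by the choice $\lambda=\eps^2/t$ in $d=3,4$ while no regularization is needed in $d\geq 5$. The spectral estimates $\|\phi_\lambda\|^2\sim\lambda^{-1/2},\,|\log\lambda|,\,O(1)$ in $d=3,4,\geq 5$ give boundary orders $\sqrt\eps$, $\eps\sqrt{|\log\eps|}$ and $\eps$ respectively. Mourrat's quantitative CLT then controls the Gaussian approximation of the martingale exponent in terms of moments of $\langle\Phi^\eps\rangle_t-\sigma^2 t$, and the $1$-Lipschitz property of $u\mapsto e^{iu}$ turns the Wasserstein bound into a pointwise comparison of characteristic functions.

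The main technical obstacle is the moment estimate $\E|\langle\Phi^\eps\rangle_t-\sigma_\lambda^2 t|$, where the dimensional rates emerge. With $g_\lambda=|\nabla\phi_\lambda|^2-\|\nabla\phi_\lambda\|^2$, stationarity reduces this variance to a double time integral of the covariance
$$C_\lambda(r)=\int_{\R^d}\E[g_\lambda(\omega)g_\lambda(\tau_x\omega)]q_r(x)\,dx.$$
Under Assumption \ref{ass:mixing} and $\E V^6<\infty$, strong mixing of $V$ combined with the representation of $\nabla\phi_\lambda$ as a spatial convolution of $\V$ propagates through the quartic form $g_\lambda$ to give polynomial spatial decay of $\E[g_\lambda(\omega)g_\lambda(\tau_x\omega)]$; convolving with the Gaussian kernel $q_r\sim r^{-d/2}$ yields $C_\lambda(r)\les 1\wedge r^{-d/2}$. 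The dimension-dependent integrals $\int_0^{t/\eps^2}(1\wedge r^{-d/2})\,dr$ evaluate to $(t/\eps^2)^{1/2}$, $\log(t/\eps^2)$, $O(1)$ in $d=3,4,\geq 5$, and plugging back into the characteristic-function bound reproduces exactly the stated rates. The delicate point is this propagation of mixing through the quartic nonlinearity $g_\lambda$, which is precisely where the sixth-moment hypothesis on $V$ is consumed, together with tracking the $\lambda$-dependent prefactors so that boundary, regularization, and martingale contributions simultaneously saturate the same dimensional rate.
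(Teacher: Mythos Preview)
Your overall architecture---Feynman--Kac, the Kipnis--Varadhan corrector decomposition with regularized $\phi_\lambda$, and Mourrat's quantitative martingale CLT---is exactly the paper's, and the boundary/remainder estimates you quote for $\eps\|\phi_\lambda\|$ are correct. Two points, however, are genuine gaps.

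\textbf{Coupling with $f(x+B_t)$.} Mourrat's bound (Proposition~\ref{prop:qmCLT}) is applied, for fixed $\omega$, in the Brownian filtration and controls $|\E_B g(\Phi^\eps_t)-\E g(\sigma W_t)|$ for scalar test functions $g$. What you need, though, is
\[
\E_B\bigl\{f(x+B_t)\,e^{i\Phi^\eps_t}\bigr\}-\E_B\{f(x+B_t)\}\,e^{-\sigma^2 t/2},
\]
which requires quantitative control of the \emph{joint} law of $(B_t,\Phi^\eps_t)$, in particular their asymptotic independence. Your $L^2$/two-copy expansion does not fix this: it produces the quantities $\E_\omega e^{iY_\eps(B)}$ for \emph{fixed} Brownian path $B$, whereas the martingale structure and Mourrat's bound live in the Brownian filtration for fixed $\omega$; these are different objects. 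The paper's device is to Fourier-transform $f$: writing $f(x+B_t)=(2\pi)^{-d}\int\hat f(\xi)e^{i\xi\cdot(x+B_t)}d\xi$ turns the product into $\E_B\exp\bigl(i(\eps\xi\cdot B_{t/\eps^2}+M^\eps_t)\bigr)$, the characteristic function of a \emph{single} scalar continuous martingale, to which Proposition~\ref{prop:qmCLT} applies frequency by frequency. The cross term $2\sum_k\xi_k D_k\Phi_\lambda$ then enters the quadratic variation (the piece $Z_{2,\lambda,\xi}$) and is estimated alongside $g_\lambda$.

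\textbf{Decay of the quartic covariance.} Your claim $C_\lambda(r)\les 1\wedge r^{-d/2}$ is internally inconsistent with your own integral evaluations: $r^{-d/2}$ is integrable on $[1,\infty)$ for every $d\ge3$, so $\int_0^{t/\eps^2}(1\wedge r^{-d/2})\,dr=O(1)$ in all dimensions, not $(t/\eps^2)^{1/2}$ or $\log(t/\eps^2)$. The source of the error is that strong mixing of $V$ does \emph{not} propagate to arbitrary polynomial decay of $\E[g_\lambda(\omega)g_\lambda(\tau_x\omega)]$, because $\nabla\phi_\lambda$ is a nonlocal functional of $V$ through $\nabla G_\lambda$, which itself decays only like $|x|^{1-d}$. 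The paper's Proposition~\ref{prop:boundCOV} and Lemma~\ref{lem:2ndMoment} carry the mixing bound through this convolution and show that the spatial covariance of $g_\lambda$ behaves like $|x|^{-(d-2)}$ with an exponential cutoff at scale $\lambda^{-1/2}$; the corresponding time correlation is $C_\lambda(r)\sim r^{-(d-2)/2}$, and it is \emph{this} exponent that produces the integrals $(t/\eps^2)^{1/2}$, $\log(t/\eps^2)$, $O(1)$ you want. In the paper's spatial formulation the final step is the estimate of $\int_{\R^d}F_{\lambda,c,\beta}(x)|x|^{2-d}\,dx$ in Lemma~\ref{lem:fluctuationError}.
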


\begin{remark}
As suggested by the notation, $C_{d,f,\rho}$ only depends on the dimension, initial condition and mixing coefficient. If we follow the proof, it is easy to check that we only need to assume $\rho(r)\les 1\wedge r^{-\beta}$ for sufficiently large $\beta$, and the regularity assumption on $f$ could be improved as well.
\end{remark}

The error estimate given in Theorem \ref{thm:THcorrector} is universal in the sense that it is independent of the potential as long as Assumption \ref{ass:mixing} holds. The strongly mixing property is only used when estimating moments of $V(x)$ and controlling relevant integrals. For Gaussian potentials, the calculation of moments is straightforward, and this  enables us to extend the error estimate to long-range-correlation setting.

\begin{assumption}
$V(x)$ is a zero-mean Gaussian random field with auto-covariance function $R(x)\sim |x|^{-\beta}$ as $x\to \infty$ for $\beta\in(2,d)$.
\label{ass:gauAss}
\end{assumption}

The condition $\beta>2$ ensures that $R(x)|x|^{2-d}$ is integrable so Assumption \ref{ass:finiteVar} holds. On the other hand, $\beta<d$ so $R(x)$ is not integrable and it is the long-range-correlated case. The following theorem is a precise description of how the homogenization error depends on the interaction between the dimension $d$ and the decay rate $\beta$ of auto-covariance function.

%%%%%%%%%%
%%%%%%%%%%
\begin{theorem}[\emph{Error estimate for long-range-correlated Gaussian potentials}]
Under Assumption \ref{ass:gauAss}, if $f\in \C_c^\infty(\R^d)$, the following error estimates hold:
 \begin{itemize}
 
 \item when $d=3,4$, \begin{equation}
 \E\{|u_\eps(t,x)-u_{hom}(t,x)|\}\leq (1+t)C_{d,f,\beta}\eps^{\frac{\beta}{2}-1},
 \end{equation}

\item when $d>4$, 
 \begin{equation}
\E\{|u_\eps(t,x)-u_{hom}(t,x)|\}\leq(1+t)C_{d,f,\beta} \left\{
\begin{array}{ll}
\eps^{\frac{\beta}{2}-1} & \beta\in (2,4),\\
\eps\sqrt{|\log \eps|} & \beta=4,\\
\eps & \beta\in (4,d).
\end{array} \right.
\end{equation}

%  \item $d=3$,
%  \begin{equation}
%\E\{|u_\eps(t,x)-u_{hom}(t,x)|\}\leq (1+t)C_{d,f,\beta}\left\{
%\begin{array}{ll}
%\eps^{\frac{\beta}{2}-1} & \beta\in (2,3),\\
%\sqrt{\eps|\log \eps|} & \beta=3,\\
%\sqrt{\eps} & \beta>3.
%\end{array} \right.
%\end{equation}
%
%  \item $d=4$,
%    \begin{equation}\
%\E\{|u_\eps(t,x)-u_{hom}(t,x)|\}\leq (1+t)C_{d,f,\beta}\left\{
%\begin{array}{ll}
%\eps^{\frac{\beta}{2}-1} & \beta\in (2,4),\\
%\eps|\log \eps| & \beta=4,\\
%\eps\sqrt{|\log \eps|} & \beta>4.
%\end{array} \right.
%\end{equation}
%
%    \item $d>4$,
%    \begin{equation}
%\E\{|u_\eps(t,x)-u_{hom}(t,x)|\}\leq(1+t)C_{d,f,\beta} \left\{
%\begin{array}{ll}
%\eps^{\frac{\beta}{2}-1} & \beta\in (2,4),\\
%\eps\sqrt{|\log \eps|} & \beta=4,\\
%\eps & \beta>4.
%\end{array} \right.
%\end{equation}

  \end{itemize}
  \label{thm:THgaussian}
\end{theorem}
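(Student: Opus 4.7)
The plan is to follow the proof of Theorem \ref{thm:THcorrector} verbatim and replace the mixing-based moment input by direct Gaussian computations. The Feynman-Kac representation reduces $\E\{|u_\eps(t,x)-u_{hom}(t,x)|\}$ to a quantity bounded by the Wasserstein-$1$ distance between the rescaled random-scenery phase $\eps\int_0^{t/\eps^2} V(y+\tilde B_s)\,d\tilde s$ and a Gaussian with variance $\sigma^2 t$, and this distance is controlled by the quantitative martingale central limit theorem of \cite{mourrat2012kantorovich} applied to the Kipnis-Varadhan martingale $M^\eps$ associated with the corrector $\chi=(-L)^{-1}\V$. Since $\beta\in(2,d)$ implies $\int R(x)|x|^{2-d}dx<\infty$, Assumption \ref{ass:finiteVar} holds in the Gaussian setting of Assumption \ref{ass:gauAss}, so the entire framework of Theorems \ref{thm:THhomo}--\ref{thm:THcorrector} is available and only the moment estimates require new input.

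For Gaussian $V$, Isserlis' theorem reduces every polynomial moment to a sum of products of $R$ at paired arguments. The relevant inputs to Mourrat's CLT, namely moments of the quadratic-variation fluctuation $\langle M^\eps\rangle_t-\sigma^2 t$ and of the boundary term $\eps[\chi(\tau_{y+B_{t/\eps^2}}\omega)-\chi(\tau_y\omega)]$, thus become explicit multiple integrals of products of $R(x/\eps)$ weighted by Brownian transition densities $q_s(x)$ and by the Green's function $|x|^{2-d}$ of $-\tfrac12\Delta$ (which encodes $\chi$). Substituting $R(x)\sim|x|^{-\beta}$ and using Brownian scaling, each such integral reduces to a homogeneity calculation whose $\eps$-dependence can be read off directly.

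The three sub-cases in the statement arise from the competition between the tail exponent $\beta$ and the dimensional singularity of the kernels. For $d=3,4$ and any $\beta\in(2,d)$, the heavy tail of $R$ dominates and yields the universal rate $\eps^{\beta/2-1}$. For $d>4$, a fourth-moment integral becomes the bottleneck and introduces a threshold at $\beta=4$: below it the tail of $R$ still controls the rate, giving $\eps^{\beta/2-1}$; above it the tail is integrable against the short-range kernels and the rate saturates at the short-range value $\eps$ of Theorem \ref{thm:THcorrector}; and at the critical value $\beta=4$ the two contributions balance, producing the logarithmic correction $\eps\sqrt{|\log \eps|}$.

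The main technical obstacle is the bookkeeping of the Wick-expanded integrals together with the boundary terms from the martingale decomposition. Because in $d\geq 3$ the corrector $\chi$ is defined only modulo constants (only $\nabla\chi$ has finite $L^2(\Omega)$ norm), one should work with a regularization $\chi_\lambda=(\lambda-L)^{-1}\V$, bound the increment $\chi_\lambda(\tau_{B_{t/\eps^2}}\omega)-\chi_\lambda(\omega)$ using Gaussian moments uniformly in $\lambda$, and pass to the limit $\lambda\to 0$. A secondary delicate point is the critical case $\beta=4$, where both the inner Wick pairings and the outer Brownian kernel integrate to logarithms, and one must verify that exactly one factor of $\sqrt{|\log \eps|}$ survives in the final bound rather than a higher power.
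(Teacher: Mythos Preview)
Your plan is essentially the paper's own proof: the same Feynman--Kac/Kipnis--Varadhan/Mourrat framework is reused, the mixing-based fourth-moment lemma is replaced by the exact Wick (Isserlis) expansion for Gaussian $V$, and the resulting convolution integrals of $|R|$ against $|\nabla G_\lambda|$ are estimated directly to produce the $\beta$-dependent rates via the competition you describe. One technical point to adjust: the paper does \emph{not} pass to $\lambda\to 0$ with uniform bounds as you suggest, but fixes $\lambda=\eps^2$ throughout; the remainder $R^\eps_t$ then has size $\sqrt{\lambda\langle\Phi_\lambda,\Phi_\lambda\rangle}$, and it is precisely the $\lambda$-dependence of this quantity (Proposition~\ref{prop:remainder1} under Assumption~\ref{ass:gauAss}) that supplies the rates $\eps^{\beta/2-1}$, $\eps\sqrt{|\log\eps|}$, $\eps$---in $d=3,4$ the corrector $\Phi_\lambda$ is not uniformly bounded in $L^2(\Omega)$ as $\lambda\to0$, so the limit you propose would not close there.
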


The result shows that for sufficiently long-range-correlated random potentials, the convergence rate in homogenization could be potential-dependent, e.g., when $\beta\to 2$, the error is of order $\eps^{\frac{\beta}{2}-1}$ and could be arbitrarily close to $O(1)$. On the other hand, it can be shown that when the covariance function is integrable, i.e., $\beta>d$, we recover the result for strongly mixing potentials.

%%%%%%%%%%%%%%%%%%%%%%%%%%%%%%%%%%%%%%
%%%%%%%%%%%%%%%%%%%%%%%%%%%%%%%%%%%%%%
\subsection{Convergence to SPDE}

Let $(\Omega,\mathcal{F},\Pb)$ be a probability space. The following is our assumption on random coefficient $V(x)=V(x,\omega)$ with $\omega\in\Omega$ labeling the particular realization.
\begin{assumption}
$V(x)=\Phi(g(x))$, where
\begin{itemize}
\item $g(x)$ is a stationary Gaussian field with zero mean and unit variance. The auto-covariance function $R_g(x)=\E\{g(0)g(x)\}$ satisfies that $|R_g(x)|\les \prod_{i=1}^d \min(1,|x_i|^{-\alpha_i})$ with $\alpha_i\in (0,1)$ and $R_g(x)\sim c_d\prod_{i=1}^d |x_i|^{-\alpha_i}$ as $\min_{i=1,\ldots,d}|x_i|\to \infty$. $\alpha:=\sum_{i=1}^d\alpha_i\in (0,2)$.

\item $\Phi$ is a deterministic function with Hermite rank $1$, i.e., $\int_{\R}\Phi^2(x)\frac{1}{\sqrt{2\pi}}\exp(-x^2/2)dx<\infty$ and if we define $V_k=\E\{\Phi(g)H_k(g)\}$ with $H_k(x)=(-1)^k\exp(x^2/2)\frac{d^k}{dx^k}\exp(-x^2/2)$ the $k-$th Hermite polynomial, then $V_0=0,V_1\neq 0$.

\end{itemize}
\label{ass:longRangeGauss}
\end{assumption}

We will see later that $R(x)=\E\{V(0)V(x)\}\sim V_1^2c_d \prod_{i=1}^d |x_i|^{-\alpha_i}$, and since $\alpha=\sum_{i=1}^d \alpha_i<2$, $R(x)|x|^{2-d}$ is not integrable, so $\sigma^2$ in \eqref{eq:limitHOMO} is not well-defined and we do not expect the result of homogenization. We consider the equation when $d\geq 3$:
\begin{equation}
\partial_t u_\eps(t,x,\omega)=\frac12\Delta u_\eps(t,x,\omega)+i\frac{1}{\eps^{\alpha/2}}V(\frac{x}{\eps},\omega)u_\eps(t,x,\omega),
\label{eq:mainEqSPDE}
\end{equation}
with initial condition $u_\eps(0,x,\omega)=f(x)$ for $f\in \C_b(\R^d)$, i.e., in \eqref{eq:mainEq}, we choose $\gamma=\frac{\alpha}{2}<1$. The following is the result of convergence to SPDE.

\begin{theorem}
Under Assumption \ref{ass:longRangeGauss}, we have $u_\eps(t,x)\to u_{spde}(t,x)$ in distribution, with $u_{spde}$ solving the SPDE with multiplicative noise:
\begin{equation}
\partial_t u_{spde}=\frac12\Delta u_{spde}+i V_1\sqrt{c_d}\dot{W} u_{spde},
\label{eq:limitSPDE}
\end{equation}
where $\dot{W}(x)$ is a generalized Gaussian random field with covariance function $\E\{\dot{W}(x)\dot{W}(y)\}=\prod_{i=1}^d |x_i-y_i|^{-\alpha_i}$.
\label{thm:THspde}
\end{theorem}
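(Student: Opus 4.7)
The plan is to combine the Feynman--Kac representation \eqref{eq:FKrep} with a Hermite decomposition of $\Phi$ and a moment method. Writing $u_\eps(t,x)=\E_B\{f(x+B_t)\,e^{i\xi_\eps(t,x)}\}$ with $\xi_\eps(t,x):=\eps^{-\alpha/2}\int_0^t V((x+B_s)/\eps)\,ds$, the candidate limit admits the formal Feynman--Kac form $u_{spde}(t,x)=\E_B\{f(x+B_t)\,e^{iV_1\sqrt{c_d}\,I(t,x)}\}$ where, conditional on $B$, $I(t,x)=\int_0^t\dot W(x+B_s)\,ds$ is centred Gaussian with variance $\Sigma(B):=\int_0^t\!\int_0^t\prod_{i=1}^d|B^{(i)}_s-B^{(i)}_r|^{-\alpha_i}\,ds\,dr$. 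Since $\alpha_i\in(0,1)$ and $\sum\alpha_i<2$, coordinate-wise Gaussian moments give $\E_B\{|B^{(i)}_s-B^{(i)}_r|^{-\alpha_i}\}\asymp|s-r|^{-\alpha_i/2}$ and hence $\E_B\Sigma<\infty$, so the representation is well defined. Because $|u_\eps|\le\|f\|_\infty$, it suffices to show convergence of the joint moments $\E\{u_\eps^n\,\bar u_\eps^m\}$ for all $n,m\ge 0$; by Feynman--Kac with $N=n+m$ independent Brownian copies, each such moment equals $\E_B\{F(B^1,\dots,B^N)\,\E\{e^{i\Xi_\eps}\}\}$, with $F$ a product of factors $f(x+B^j_t)$ or $\bar f(x+B^j_t)$ and $\Xi_\eps=\eps^{-\alpha/2}\sum_{j=1}^N\sigma_j\int_0^t V((x+B^j_s)/\eps)\,ds$, $\sigma_j\in\{\pm 1\}$.

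I would then use Assumption \ref{ass:longRangeGauss} to write $\Phi(g)=V_1 g+\sum_{k\ge 2}(V_k/k!)H_k(g)$ and split $\Xi_\eps=\Xi_\eps^{(1)}+\Xi_\eps^{(\ge 2)}$. Conditional on the paths, $\Xi_\eps^{(1)}$ is centred Gaussian with variance $V_1^2\eps^{-\alpha}\sum_{j,k}\sigma_j\sigma_k\int_0^t\!\int_0^t R_g((B^j_s-B^k_r)/\eps)\,ds\,dr$. Combining $R_g(y)\sim c_d\prod_i|y_i|^{-\alpha_i}$ with the uniform bound $|R_g(y)|\les\prod_i\min(1,|y_i|^{-\alpha_i})$, a dominated-convergence argument shows this variance tends $\Pb_B$-a.s.\ to $V_1^2 c_d\sum_{j,k}\sigma_j\sigma_k\Sigma_{jk}$, where $\Sigma_{jk}$ is the analogue of $\Sigma$ with $B^j,B^k$ replacing $B$. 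This matches exactly the conditional covariance of $V_1\sqrt{c_d}\sum_j\sigma_j\int_0^t\dot W(x+B^j_s)\,ds$, so the conditional characteristic function $\E\{e^{i\Xi_\eps^{(1)}}\mid B^1,\dots,B^N\}$ converges almost surely to that of the limiting object.

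For the higher-Hermite remainder, orthogonality gives $\E\E_B\{|\Xi_\eps^{(\ge 2)}|^2\}\les\sum_{k\ge 2}(V_k^2/k!)\,\eps^{-\alpha}\,\E_B\int_0^t\!\int_0^t|R_g((B_s-B_r)/\eps)|^k\,ds\,dr$, and a scaling computation (splitting the integral at $|B_s-B_r|\sim\eps$) shows each $k$-term has size $\eps^{\min((k-1)\alpha,\,2-\alpha)}$, which vanishes for every $k\ge 2$; summability in $k$ is ensured by $\sum_k V_k^2/k!<\infty$. Combining the two pieces and applying bounded convergence under $\E_B$ (all exponentials are bounded by $1$) identifies the limit of each joint moment with the corresponding moment of $u_{spde}(t,x)$; since $u_{spde}(t,x)$ is bounded by $\|f\|_\infty$, its distribution is determined by its moments, completing the proof. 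The main obstacle I anticipate is the $\Pb_B$-a.s.\ dominated-convergence step for the variance of $\Xi_\eps^{(1)}$: a uniform-in-$\eps$ integrable majorant for $\eps^{-\alpha}R_g((B^j_s-B^k_r)/\eps)$ must be constructed, and along the diagonal $j=k$ this requires carefully combining the two-sided pointwise bound on $R_g$ with the Brownian small-ball estimates above, while the off-diagonal $j\ne k$ cross terms, whose two-point density is Gaussian with variance $s+r$, are comparatively easier but still must be controlled uniformly in $(j,k)$.
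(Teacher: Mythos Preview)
Your proposal is correct and follows essentially the same route as the paper: Feynman--Kac plus the moment method over $N$ independent Brownian copies, Hermite splitting $\Xi_\eps=\Xi_\eps^{(1)}+\Xi_\eps^{(\ge 2)}$, conditional Gaussianity of $\Xi_\eps^{(1)}$ with a.s.\ convergence of its variance, $L^2$-vanishing of the higher chaos, and moment determinacy from boundedness. The anticipated obstacle you flag is not a real difficulty: the product bound $|R_g(y)|\les\prod_i\min(1,|y_i|^{-\alpha_i})$ immediately gives $\eps^{-\alpha}|R_g(y/\eps)|\les\prod_i\min(\eps^{-\alpha_i},|y_i|^{-\alpha_i})\le\prod_i|y_i|^{-\alpha_i}$, which is the $\eps$-free majorant you need (it is $\Pb_B$-integrable over $[0,t]^2$ for both $j=k$ and $j\ne k$ by the Gaussian moment computation you already sketched), and the paper uses exactly this.
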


For the limiting SPDE \eqref{eq:limitSPDE}, the product between $\dot{W}$ and $u_{spde}$ is in the Stratonovich's sense. The solution will be defined through a Feynman-Kac formula and shown to be a weak solution.

\begin{remark}
The proof of Theorem \ref{thm:THspde} also holds for $d=1,2$. When $d=2$, since $\alpha_1,\alpha_2\in (0,1)$, $\alpha=\alpha_1+\alpha_2\in (0,2)$ is automatically satisfied. When $d=1$, we have $\alpha=\alpha_1\in (0,1)$.
\end{remark}

\subsection{Remarks}
\label{sec:conclu}

One of the  main ingredients in the proof of both homogenization and convergence to SPDE is the weak convergence of Brownian motion in random scenery. In the homogenization setting, Kipnis-Varadhan's result implies $\eps^{-1}\int_0^t V(B_s/\eps)ds\Rightarrow \sigma W_t$ in $\C([0,T])$ in $\Pb-$probability, with only necessary assumptions of stationarity, ergodicity, and finiteness of asymptotic variance. In the SPDE setting, Proposition \ref{prop:conLong} below shows $\eps^{-\alpha/2}\int_0^t V(B_s/\eps)ds\Rightarrow V_1\sqrt{c_d}\int_0^t \dot{W}(B_s)ds$ in the annealed sense, where $V$ is chosen as functionals of stationary Gaussian process. The difference between the results of weak convergence sheds light on the transition from homogenization to stochasticity from a probabilistic point of view. 

To obtain optimal error estimate, a quantification of ergodicity is in need and we assume a strong mixing of the random potential. Ergodicity is quantified by controlling the tail of the mixing coefficient and is used only to estimate the fourth-order moment of the random potential. When the fourth-order moment can be estimated explicitly without any mixing condition, then similar error estimates can be derived. We considered here the example of long-range-correlated Gaussian potential and derived convergence rate depending on its decorrelation rate.

In the homogenization setting of low dimensions, when $d=2$, weak convergence of Brownian motion in random scenery has been proved for specific types of short-range-correlated potentials in the annealed sense, including Gaussian, Poissonian \cite{gu2014invariance} and piecewise-constant cases \cite{remillard1991limit}. The size of potentials then includes a logarithm correction. It is not clear whether Kipnis-Varadhan's approach works to obtain weak convergence in probability. With the annealed weak convergence, homogenization could be derived by showing the convergence of $\E\{u_\eps(t,x)\}$ and $\E\{|u_\eps(t,x)|^2\}$ respectively. When $d=1$, \cite{pardoux2006homogenization} derived a stochastic limit for short-range-correlated potentials.

Intuitively, Theorem \ref{thm:THhomo} of homogenization corresponds to law of large numbers while Theorem \ref{thm:THcorrector} and \ref{thm:THgaussian} relate to error estimate. It is natural to inquire about central limit type result, i.e., the weak convergence of $\eps^{-\delta}(u_\eps(t,x)-u_{hom}(t,x))$ for appropriate $\delta>0$. In \cite{bal2010homogenization}, for the same type of equations, central limit type of result is derived by a different approach for Gaussian potentials. The probabilistic approach is currently under study.

%%%%%%%%%%%%%%%%%%%%%%%%%%%%%%%%%%%%%%%%%%%
%%%%%%%%%%%%%%%%%%%%%%%%%%%%%%%%%%%%%%%%%%%

\section{Proof of homogenization and error estimate}
\label{sec:homo}

\subsection{Feynman-Kac formula, medium seen from the observer and auxiliary equation}
\label{sec:fkSetup}
The solution to \eqref{eq:mainEqhomo} is written as
\begin{equation}
u_\eps(t,x)=\E_B\{f(x+B_t)\exp(i\frac{1}{\eps}\int_0^tV(\frac{x+B_s}{\eps})ds)\},
\label{eq:soFK}
\end{equation}
with Brownian motion $B_s$ starting from the origin. 

%Note that for $u_\eps(t,x)$ to be the classical solution to \eqref{eq:mainEq}, we need to make certain regularity assumptions on the potential $V(x)$. However, it could be shown that the solution defined by the Feynman-Kac formula is indeed a weak solution to \eqref{eq:mainEq}, see e.g. Proposition 2.3. \cite{gu2013weak}. Due to the possible unboundedness of $V(x)$, it is not clear that whether this weak solution is unique. We do not address this technical issue here but will focus on the probabilistic representations, therefore when we refer to the solution to \eqref{eq:mainEq}, it is the weak solution given by the Feynman-Kac formula \eqref{eq:soFK}.

By the scaling property of Brownian motion, $$u_\eps(t,x)=\E_B\{f(x+\eps B_{t/\eps^2})\exp(i\eps\int_0^{t/\eps^2}V(\frac{x}{\eps}+B_s)ds)\}.$$ Since $u_{hom}$ is deterministic, by stationarity of $V$, the difference between the solutions to the heterogeneous and homogenized equations can be written as
\begin{equation}
\begin{aligned}
&\E\{|u_\eps(t,x)-u_{hom}(t,x)|\}\\
=&\E\{|\E_B\{f(x+\eps B_{t/\eps^2})\exp(i\eps\int_0^{t/\eps^2}V(B_s)ds)\}-\E_B\{f(x+\eps B_{t/\eps^2})\exp(-\frac12\sigma^2t)\}|\}.
\end{aligned}
\end{equation}
Now we look at $X_\eps(t):=\eps\int_0^{t/\eps^2}\V(\tau_{B_s}\omega)ds=\eps\int_0^{t/\eps^2}V(B_s)ds$. For $y_s:=\tau_{B_s}\omega$, it is a stationary, ergodic Markov process taking values in $\Omega$ with invariant measure $\Pb$, and the generator of $y_s$ is given by $L=\frac12\sum_{k=1}^dD_k^2$, see e.g. \cite{komorowski2012fluctuations}.

We define the corrector function $\Phi_\lambda$ for any $\lambda>0$ such that 
\begin{equation}
(\lambda I-L)\Phi_\lambda=\V,
\label{eq:correctorEq}
\end{equation}
then the following proposition holds.

\begin{proposition}
\begin{equation}
\Phi_\lambda=\int_{\R^d}\frac{1}{\lambda+\frac12|\xi|^2}U(d\xi)\V.
\end{equation}

Under Assumption \ref{ass:finiteVar}, $\lambda\langle\Phi_\lambda,\Phi_\lambda\rangle\to 0$ as $\lambda\to 0$.

Under Assumption \ref{ass:mixing},
\begin{equation}
\lambda\langle \Phi_\lambda,\Phi_\lambda\rangle\les\left\{
\begin{array}{ll}
\sqrt{\lambda} & d=3,\\
\lambda|\log \lambda| & d=4,\\
\lambda & d>4.
\end{array}\right.
\end{equation}

Under Assumption \ref{ass:gauAss},
\begin{equation}
\lambda\langle \Phi_\lambda,\Phi_\lambda\rangle\les\left\{
\begin{array}{ll}
\lambda^{\frac{\beta}{2}-1} & \beta\in (2,4),\\
\lambda|\log \lambda| & \beta=4,\\
\lambda & \beta>4.
\end{array}\right.
\end{equation}
\label{prop:remainder1}
\end{proposition}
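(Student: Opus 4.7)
\emph{Proof plan.} The starting point is a Fourier representation. Since $T_x = \int e^{i\xi\cdot x}\,U(d\xi)$ is strongly continuous, the $L^2(\Omega)$ generators act by multiplication in spectral form, $D_k = i\int \xi_k\, U(d\xi)$ and $L = -\tfrac12 \int |\xi|^2\, U(d\xi)$. Since $\lambda > 0$ lies in the resolvent set of $L$, spectral inversion gives the first claim
\begin{equation*}
\Phi_\lambda \;=\; \int_{\R^d} \frac{1}{\lambda + \tfrac12|\xi|^2}\, U(d\xi)\V.
\end{equation*}
Pairing this with itself and using $\hat R(\xi)\,d\xi = (2\pi)^d \langle U(d\xi)\V, \V\rangle$ produces the master identity
\begin{equation*}
\lambda\langle \Phi_\lambda, \Phi_\lambda\rangle \;=\; \frac{\lambda}{(2\pi)^d} \int_{\R^d} \frac{\hat R(\xi)}{(\lambda + \tfrac12|\xi|^2)^2}\, d\xi,
\end{equation*}
and every remaining assertion reduces to the asymptotic estimation of this single integral as $\lambda \to 0$.

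For Assumption \ref{ass:finiteVar}, the elementary bound $\lambda/(\lambda + a)^2 \leq 1/a$ dominates the integrand by $2\hat R(\xi)/|\xi|^2$, which is integrable by assumption, while the integrand itself tends pointwise to $0$ for $\xi \neq 0$. Dominated convergence closes this part.

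For Assumption \ref{ass:mixing}, the rapid decay of $\rho$ forces $R \in L^1 \cap L^\infty$, so $\hat R$ is continuous, bounded, and integrable. I would split the integral at $|\xi| = \sqrt{2\lambda}$. On the inner ball, bound $(\lambda + \tfrac12|\xi|^2)^{-2} \leq \lambda^{-2}$ and $\hat R$ by its sup norm, giving $O(\lambda^{d/2 - 1})$. Outside, $(\lambda + \tfrac12|\xi|^2)^{-2} \leq 4|\xi|^{-4}$; split further at $|\xi| = 1$. On $\sqrt{2\lambda} \leq |\xi| \leq 1$ use $\|\hat R\|_\infty$ and the radial integral $\int r^{d-5}\, dr$, which yields $\sqrt\lambda$ for $d = 3$, $|\log\lambda|$ for $d = 4$, and $O(1)$ for $d > 4$; on $|\xi| \geq 1$, $\hat R \in L^1$ gives an overall $O(\lambda)$. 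Summing the three contributions recovers the stated dimension-dependent rates.

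For Assumption \ref{ass:gauAss}, the new ingredient is the near-origin singular behavior $\hat R(\xi) \lesssim |\xi|^{-(d-\beta)}$, which I would derive by writing $R = \chi R + (1-\chi)R$ with $\chi$ a smooth compactly supported cutoff: the first piece has a smooth bounded Fourier transform, and the tail $(1-\chi)R(x) \sim c|x|^{-\beta}$ inherits the profile of the Fourier transform of $c|x|^{-\beta}$, which is $c'|\xi|^{-(d-\beta)}$. Substituting into the master identity and splitting at $|\xi| = 1$, the inner contribution is controlled by $\int_{|\xi| \leq 1} \lambda |\xi|^{-(d-\beta)}/(\lambda + \tfrac12|\xi|^2)^2\, d\xi$, and the change of variable $|\xi| = \sqrt{2\lambda}\, s$ turns it into $\lambda^{\beta/2 - 1}\int_0^{1/\sqrt{2\lambda}} s^{\beta - 1}/(1+\tfrac12 s^2)^2\, ds$. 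The large-$s$ integrand behaves like $s^{\beta - 5}$, so the integral converges for $\beta < 4$, is logarithmic for $\beta = 4$, and grows like $\lambda^{(4-\beta)/2}$ for $\beta > 4$, delivering the three rates $\lambda^{\beta/2-1}$, $\lambda|\log\lambda|$ and $\lambda$ respectively. The $|\xi| \geq 1$ contribution is $O(\lambda)$ via $\hat R \in L^1$ (which follows from $R(0) < \infty$) and never dominates. The main delicate step of the whole proposition is this clean extraction of the singular profile of $\hat R$ near zero from the tail asymptotic of $R$; once in hand, the three regimes follow by the same splitting used in the mixing case.
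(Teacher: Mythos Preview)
Your proposal is correct and follows essentially the same route as the paper: the spectral representation of $\Phi_\lambda$, the identity $\lambda\langle\Phi_\lambda,\Phi_\lambda\rangle = (2\pi)^{-d}\int \lambda\hat R(\xi)/(\lambda+\tfrac12|\xi|^2)^2\,d\xi$, dominated convergence for Assumption~\ref{ass:finiteVar}, and a splitting argument using boundedness of $\hat R$ (mixing case) or $\hat R(\xi)\lesssim|\xi|^{\beta-d}$ near the origin (Gaussian case). The only cosmetic difference is that the paper first bounds $\lambda/(\lambda+\tfrac12|\xi|^2)^2 \lesssim \frac{\lambda}{\lambda+|\xi|^2}\cdot|\xi|^{-2}$ and then rescales $\xi\mapsto\sqrt{\lambda}\,\xi$ before splitting at $|\xi|=1/\sqrt{\lambda}$, whereas you split directly at $|\xi|=\sqrt{2\lambda}$ and $|\xi|=1$; the computations coincide, and you supply more detail than the paper on why $\hat R(\xi)\lesssim|\xi|^{\beta-d}$ under Assumption~\ref{ass:gauAss}, which the paper simply asserts.
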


If we define
\begin{equation}
\eta_k=\int_{\R^d}\frac{2i\xi_k}{|\xi|^2}U(d\xi)\V
\end{equation}
for $k=1,\ldots,d$, $\sigma^2=\sum_{k=1}^d \|\eta_k\|^2$. Defining $\sigma_\lambda^2=\sum_{k=1}^d \|D_k\Phi_\lambda\|^2$, the following proposition holds.
\begin{proposition}
Under Assumption \ref{ass:finiteVar}, $D_k\Phi_\lambda\to \eta_k$ in $L^2(\Omega)$ as $\lambda\to 0$.

Under Assumption \ref{ass:mixing},
\begin{equation}
|\sigma_\lambda^2-\sigma^2|\les \left\{
\begin{array}{ll}
\sqrt{\lambda} & d=3,\\
\lambda |\log \lambda| & d=4,\\
\lambda & d>4.
\end{array}\right.
\end{equation}

Under Assumption \ref{ass:gauAss},
\begin{equation}
|\sigma_\lambda^2-\sigma^2|\les \left\{
\begin{array}{ll}
\lambda^{\frac{\beta}{2}-1} & \beta\in (2,4),\\
\lambda |\log \lambda| & \beta=4.\\
\lambda & \beta>4.
\end{array}\right.
\end{equation}

\label{prop:remainder2}
\end{proposition}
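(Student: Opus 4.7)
The plan is to reduce everything to explicit one-dimensional radial integrals of the spectral density $\hat R(\xi)$ through the spectral resolution of $T_x$. Since $D_k\Phi_\lambda=\int_{\R^d}\frac{i\xi_k}{\lambda+\frac12|\xi|^2}U(d\xi)\V$ and $\eta_k=\int_{\R^d}\frac{2i\xi_k}{|\xi|^2}U(d\xi)\V$, the spectral theorem gives
$$\sigma_\lambda^2=\frac{1}{(2\pi)^d}\int_{\R^d}\frac{|\xi|^2\hat R(\xi)}{(\lambda+\frac12|\xi|^2)^2}d\xi,\qquad \sigma^2=\frac{1}{(2\pi)^d}\int_{\R^d}\frac{4\hat R(\xi)}{|\xi|^2}d\xi,$$
and a short algebraic manipulation yields the two key identities
$$\sigma^2-\sigma_\lambda^2=\frac{1}{(2\pi)^d}\int_{\R^d}\frac{4\lambda(\lambda+|\xi|^2)\hat R(\xi)}{|\xi|^2(\lambda+\frac12|\xi|^2)^2}d\xi,\qquad \sum_k\|D_k\Phi_\lambda-\eta_k\|^2=\frac{1}{(2\pi)^d}\int_{\R^d}\frac{4\lambda^2\hat R(\xi)}{|\xi|^2(\lambda+\frac12|\xi|^2)^2}d\xi.$$
The structure of the proof will then mirror that of Proposition \ref{prop:remainder1}: under each assumption, the same frequency-splitting estimates for $\lambda\langle \Phi_\lambda,\Phi_\lambda\rangle$ carry over with minor modifications.

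For the qualitative $L^2$ convergence $D_k\Phi_\lambda\to\eta_k$ under Assumption \ref{ass:finiteVar}, I would dominate the integrand in the second identity by the $\lambda$-independent majorant $\frac{4\hat R(\xi)}{|\xi|^2}$, which is integrable by the assumption. Since the integrand vanishes pointwise as $\lambda\to 0$, dominated convergence gives the claim and, a fortiori, $\sigma_\lambda^2\to\sigma^2$.

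For the quantitative rates I would split the integration into the three regions $\{|\xi|^2<\lambda\}$, $\{\lambda\leq|\xi|^2\leq 1\}$, $\{|\xi|>1\}$ and estimate the kernel $4\lambda(\lambda+|\xi|^2)|\xi|^{-2}(\lambda+\frac12|\xi|^2)^{-2}$ separately: it behaves like $|\xi|^{-2}$ on the first region, $\lambda|\xi|^{-4}$ on the second, and $\lambda|\xi|^{-4}$ on the third. Under Assumption \ref{ass:mixing}, $\hat R$ is bounded (since $R\in L^1$ by the rapid decay of the mixing coefficient) and globally integrable (since $\int\hat R=(2\pi)^d\E[\V^2]<\infty$); passing to spherical coordinates in the intermediate annulus produces the exponents $\sqrt\lambda,\ \lambda|\log\lambda|,\ \lambda$ in dimensions $d=3,4,>4$ respectively. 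Under Assumption \ref{ass:gauAss}, a standard Tauberian argument for Fourier transforms of homogeneous-type functions gives $\hat R(\xi)\sim c|\xi|^{\beta-d}$ as $\xi\to 0$; inserting this into the same split the low-frequency contribution $\{|\xi|^2<\lambda\}$ becomes $\lambda^{\beta/2-1}$ and the intermediate annulus contributes $\lambda^{\beta/2-1}$, $\lambda|\log\lambda|$, or $\lambda$ according to whether $\beta\in(2,4)$, $\beta=4$, or $\beta>4$, matching the claimed bounds.

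The main obstacle I anticipate is careful bookkeeping at the critical thresholds $d=4$ and $\beta=4$, where the logarithmic correction arises from the intermediate annulus and requires matching upper and lower scales. A secondary technical point is extracting the singular behaviour $\hat R(\xi)\sim c|\xi|^{\beta-d}$ near the origin from the mere asymptotic relation $R(x)\sim|x|^{-\beta}$; this is classical for homogeneous Fourier transforms but must be spelled out. Once these two points are handled, everything reduces to elementary radial integration.
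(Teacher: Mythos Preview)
Your proposal is correct and follows essentially the same route as the paper: derive the spectral identities for $\|D_k\Phi_\lambda-\eta_k\|^2$ and $\sigma_\lambda^2-\sigma^2$, use dominated convergence with the majorant $\hat R(\xi)/|\xi|^2$ for the qualitative statement, and then split in frequency using $\hat R$ bounded (mixing case) or $\hat R(\xi)\lesssim|\xi|^{\beta-d}$ near the origin (Gaussian case) to get the rates. The paper performs the frequency decomposition via a change of variables $\xi\mapsto\sqrt\lambda\,\xi$ followed by a two-region split $\{|\xi|<1\}\cup\{|\xi|>1\}$ rather than your three-region split, but this is only a cosmetic difference in bookkeeping; the resulting radial integrals and the handling of the logarithmic thresholds at $d=4$ and $\beta=4$ are identical.
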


%If we let $\phi_\lambda(x,\omega)=\Phi_\lambda(\tau_x\omega)$, then the following proposition holds.
%\begin{proposition}
%For $x\in \R^d$, $\phi_\lambda(x,\omega)=\int_{\R^d}G_\lambda(x-y)V(y,\omega)dy$ almost surely, where $G_\lambda(x)$ is the Green's function of $\lambda-\frac12\Delta$. In addition, we have
%\begin{equation}
%D_k\Phi_\lambda(\tau_x\omega)=\int_{\R^d}\partial_{x_k}G_\lambda(x-y)V(y,\omega)dy=\partial_{x_k}\phi_\lambda(x,\omega).
%\end{equation}
%\label{prop:DEGreen}
%\end{proposition}
\begin{proof}[Proof of Proposition \ref{prop:remainder1}]

First, we have
\begin{equation}
\lambda\langle \Phi_\lambda,\Phi_\lambda\rangle=\int_{\R^d}\frac{\lambda}{\lambda+\frac12|\xi|^2}\frac{\hat{R}(\xi)}{\lambda+\frac12|\xi|^2}d\xi\les
\int_{\R^d} \frac{\lambda}{\lambda+|\xi|^2}\frac{\hat{R}(\xi)}{|\xi|^2}d\xi.
\end{equation}
Under Assumption \ref{ass:finiteVar}, i.e., $\hat{R}(\xi)|\xi|^{-2}$ is integrable, by the dominated convergence theorem, $\lambda\langle \Phi_\lambda,\Phi_\lambda\rangle\to 0$ as $\lambda \to 0$.

If Assumption \ref{ass:mixing} holds, $\hat{R}(\xi)$ is bounded, and we obtain by direct calculation:
\begin{equation}
\begin{aligned}
\lambda\langle \Phi_\lambda,\Phi_\lambda\rangle\les&\lambda^{\frac{d}{2}-1}\int_{\R^d}\frac{1}{1+|\xi|^2}\frac{\hat{R}(\sqrt{\lambda}\xi)}{|\xi|^2}d\xi\\
\les & \lambda^{\frac{d}{2}-1}\int_{\sqrt{\lambda}|\xi|<1}\frac{1}{1+|\xi|^2}\frac{1}{|\xi|^2}d\xi+\lambda\int_{|\xi|>1}\frac{\hat{R}(\xi)}{|\xi|^4}d\xi\\
\les &\lambda^{\frac{d}{2}-1}\int_0^{\frac{1}{\sqrt{\lambda}}}\frac{r^{d-3}}{1+r^2}dr+\lambda,
\end{aligned}
\end{equation}
so when $d=3$, $\lambda\langle \Phi_\lambda,\Phi_\lambda\rangle\les \sqrt{\lambda}$. When $d=4$, $\lambda\langle \Phi_\lambda,\Phi_\lambda\rangle\les \lambda|\log \lambda|$. When $d>4$, $\lambda\langle \Phi_\lambda,\Phi_\lambda\rangle\les \lambda$.

If Assumption \ref{ass:gauAss} holds, $\hat{R}(\xi)\les |\xi|^{\beta-d}$ at the origin, and the proof is similar.
\end{proof}

\begin{proof}[Proof of Proposition \ref{prop:remainder2}]
Since
\begin{equation}
\|D_k\Phi_\lambda-\eta_k\|^2=\frac{1}{(2\pi)^d}\int_{\R^d}\frac{\lambda^2\xi_k^2}{(\lambda+\frac12|\xi|^2)^2\frac14|\xi|^4}\hat{R}(\xi)d\xi\les \int_{\R^d}\frac{\lambda^2}{\lambda^2+|\xi|^4}\frac{\hat{R}(\xi)}{|\xi|^2}d\xi,
\end{equation}
and
\begin{equation}
\sigma_\lambda^2-\sigma^2=-\frac{16}{(2\pi)^d}\int_{\R^d}\frac{(\lambda^2+\lambda|\xi|^2)}{|\xi|^2(2\lambda+|\xi|^2)^2}\hat{R}(\xi)d\xi,
\end{equation}
we obtain the result as in the proof of Proposition \ref{prop:remainder1}.
\end{proof}

%%%%%%%%%%%%%%%%%%%%%%%%%%%%%%%%%%%%%%%%%%
%%%%%%%%%%%%%%%%%%%%%%%%%%%%%%%%%%%%%%%%%%
%%%%%%%%%%%%%%%%%%%%%%%%%%%%%%%%%%%%%%%%%%

Now we are ready to prove the main theorems. We choose $\lambda=\eps^2$ from now on.

By It\^o's formula, the process of Brownian motion in random scenery can be decomposed as \begin{equation}
X_\eps(t)=\eps\int_0^{t/\eps^2}\V(\tau_{B_s}\omega)ds=R^\eps_t+M^\eps_t,
\end{equation} where
\begin{eqnarray}
R^\eps_t:&=&\eps\int_0^{t/\eps^2}\lambda\Phi_\lambda(y_s)ds-\eps\Phi_\lambda(y_{t/\eps^2})+\eps\Phi_\lambda(y_0),\\
M^\eps_t:&=&\eps\int_0^{t/\eps^2}\sum_{k=1}^d D_k\Phi_\lambda(y_s)dB^k_s.
\end{eqnarray}
Therefore, the error is decomposed correspondingly as
$
u_\eps(t,x)-u_{hom}(t,x)=\mathcal{E}_1+\mathcal{E}_2,
$
where
\begin{eqnarray}
\mathcal{E}_1&=& \E_B \{f(x+\eps B_{t/\eps^2})\exp(iR^\eps_t+iM^\eps_t)\}-\E_B\{f(x+\eps B_{t/\eps^2})\exp(iM^\eps_t)\},\\
\mathcal{E}_2&=& \E_B\{f(x+\eps B_{t/\eps^2})\exp(iM^\eps_t)\}-\E_B\{f(x+\eps B_{t/\eps^2})\exp(-\frac12\sigma^2t)\}.
\end{eqnarray}
We see $\mathcal{E}_1$ is caused by the residue $R^\eps_t$, i.e., a measure of how close $X_\eps(t)$ is to a martingale, while $\mathcal{E}_2$ relates to the convergence of the martingale $M^\eps_t$, i.e., a measure of how close the martingale is to a Brownian motion. Since $f$ is bounded, we have the estimate $\E\{|\mathcal{E}_1|\}\les\E\E_{B}\{|R^\eps_t|\}$. It is straightforward to check that
\begin{equation}
\E\{|\mathcal{E}_1|\}\les\E\E_{B}\{|R^\eps_t|\}\les\sqrt{ \lambda\langle\Phi_\lambda,\Phi_\lambda\rangle}(1+t).
\label{eq:EsReps}
\end{equation}

In the following, we estimate the convergence of $M^\eps_t$ to a Brownian motion in different ways to prove homogenization and error estimate respectively.

%%%%%%%%%%%%%%%%%%%%%%%%%%%%%%%%%%
%%%%%%%%%%%%%%%%%%%%%%%%%%%%%%%%%%
\subsection{Homogenization: proof of Theorem \ref{thm:THhomo}}

We rewrite $$M^\eps_t=\eps\int_0^{t/\eps^2}\sum_{k=1}^d (D_k\Phi_\lambda-\eta_k)(y_s)dB^k_s+\eps\int_0^{t/\eps^2}\sum_{k=1}^d \eta_k(y_s)dB^k_s:=\mathcal{E}_3+\mathcal{E}_4,$$
so
\begin{equation}
\begin{aligned}
|\mathcal{E}_2|\leq&|\E_B\{f(x+\eps B_{t/\eps^2})\exp(iM^\eps_t)\}-\E_B\{f(x+\eps B_{t/\eps^2})\exp(i\mathcal{E}_4)\}|\\
+&|\E_B\{f(x+\eps B_{t/\eps^2})\exp(i\mathcal{E}_4)\}-\E_B\{f(x+\eps B_{t/\eps^2})\exp(-\frac12\sigma^2t)\}|\\
\les &\E_{B}\{|\mathcal{E}_3|\}+\E_B\{f(x+\eps B_{t/\eps^2})\exp(i\mathcal{E}_4)\}-\E_B\{f(x+\eps B_{t/\eps^2})\exp(-\frac12\sigma^2t)\}.
\end{aligned}
\end{equation}

On one hand, we clearly have that
\begin{equation}
\E\E_{B}\{|\mathcal{E}_3|\}\leq \sqrt{t\sum_{k=1}^d \|D_k\Phi_\lambda-\eta_k\|^2}.
\end{equation}

On the other hand, by ergodic theorem and the fact that $\E\{\eta_k\}=0$ for $k=1,\ldots,d$, and $\sum_{k=1}^d \|\eta_k\|^2=\sigma^2$, we obtain for almost every $\omega\in \Omega$ and $k=1,\ldots,d$: 
\begin{eqnarray*}
\eps^2\int_0^{t/\eps^2}\eta_k(\tau_{B_s}\omega)ds&\to& 0\\
\eps^2\int_0^{t/\eps^2}\sum_{k=1}^d\eta_k^2(\tau_{B_s}\omega)ds&\to& \sigma^2
\end{eqnarray*} almost surely. Now by martingale central limit theorem \cite[page 339, Theorem 1.4]{ethier2009markov}, we conclude for almost every $\omega\in\Omega$ that:
\begin{equation}
(\eps B_{t/\eps^2}, \eps\int_0^{t/\eps^2}\sum_{k=1}^d \eta_k(\tau_{B_s}\omega)dB^k_s)\Rightarrow (W^1_t, \sigma W^2_t),
\end{equation}
where $W^1_t$ is a $d-$dimensional Brownian motion and $W^2_t$ is an independent $1-$dimensional Brownian motion. Therefore,
\begin{equation}
\E_B\{f(x+\eps B_{t/\eps^2})\exp(i\mathcal{E}_4)\}-\E_B\{f(x+\eps B_{t/\eps^2})\exp(-\frac12\sigma^2t)\}\to 0
\end{equation}
as $\eps \to 0$ almost surely.

To summarize, we have
\begin{equation}
\begin{aligned}
\E\{|u_\eps(t,x)-u_{hom}(t,x)|\}\les &\sqrt{\lambda\langle \Phi_\lambda,\Phi_\lambda\rangle}(1+t)
+\sqrt{t\sum_{k=1}^d \|D_k\Phi_\lambda-\eta_k\|^2}\\
+&\E\{|\E_B\{f(x+\eps B_{t/\eps^2})\exp(i\mathcal{E}_4)\}-\E_B\{f(x+\eps B_{t/\eps^2})\exp(-\frac12\sigma^2t)\}|\}.
\end{aligned}
\end{equation}
By Proposition \ref{prop:remainder1} and \ref{prop:remainder2}, and the dominated convergence theorem, the proof of Theorem \ref{thm:THhomo} is complete.

%%%%%%%%%%%%%%%%%%%%%%%%%%%%%%%%%%%%%%%%%%%%
%%%%%%%%%%%%%%%%%%%%%%%%%%%%%%%%%%%%%%%%%%%%

\subsection{Error estimate: proof of Theorem \ref{thm:THcorrector} and \ref{thm:THgaussian}}

Defining $\hat{f}(\xi)=\int_{\R^d}f(x)e^{-i\xi\cdot x}dx$, we can write $\mathcal{E}_2$ as
\begin{equation}
\begin{aligned}
\mathcal{E}_2%=&\E_B\{f(x+\eps B_{t/\eps^2})\exp(iM^\eps_t)\}-\E_B\{f(x+\eps B_{t/\eps^2})\exp(-\frac12\sigma^2t)\}\\
=\frac{1}{(2\pi)^d}\int_{\R^d}\hat{f}(\xi)e^{i\xi\cdot x}\E_B\{e^{i(\eps \xi\cdot B_{t/\eps^2}+M^\eps_t)}-e^{i\eps \xi\cdot B_{t/\eps^2}-\frac12\sigma^2t}\}d\xi,
\end{aligned}
\end{equation}
where $\eps\xi\cdot B_{t/\eps^2}+M^\eps_t=\eps\int_0^{t/\eps^2}\sum_{k=1}^d (\xi_k+D_k\Phi_\lambda(y_s))dB^k_s$ is a continuous, square-integrable martingale for almost every $\omega\in \Omega$. The estimation of $\E_B\{e^{i(\eps \xi\cdot B_{t/\eps^2}+M^\eps_t)}-e^{i\eps \xi\cdot B_{t/\eps^2}-\frac12\sigma^2t}\}$ reduces to a control of the Wasserstein distance between $\eps \xi\cdot B_{t/\eps^2}+M^\eps_t$ and $\eps \xi\cdot B_{t/\eps^2}+\sigma W_t$, where $W_t$ is an independent Brownian motion from $B_t$. A general quantitative martingale central limit theorem is proved in \cite{mourrat2012kantorovich}, from which we extract the following result for continuous martingales.

\begin{proposition}[Theorem 3.2, \cite{mourrat2012kantorovich}]
If $M_t$ is a continuous martingale and $W_t$ is a standard Brownian motion, then
\begin{equation}
d_{1,k}(M_1,W_1)\leq (1\vee k)\E\{ |\langle M\rangle_1-1|\},
\end{equation}
with the distance $d_{1,k}$ defined as
\begin{equation}
d_{1,k}(X,Y)=\sup \{|\E\{f(X)-f(Y)\}|: f\in C_b^2(\R), \|f'\|_\infty\leq 1, \|f''\|_\infty\leq k\}.
\end{equation}
\label{prop:qmCLT}
\end{proposition}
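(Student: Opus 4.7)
The plan is to reduce the comparison between $M_1$ and $W_1$ to a classical It\^o calculation via a time-change argument. By the Dambis--Dubins--Schwarz theorem, since $M_t$ is a continuous martingale with $M_0 = 0$, there exists (possibly on an enlarged probability space) a standard Brownian motion $\tilde{W}$ such that $M_t = \tilde{W}_{\langle M\rangle_t}$ for every $t \geq 0$, and $T := \langle M\rangle_1$ is a stopping time for the natural filtration of $\tilde{W}$. Identifying $W$ with $\tilde{W}$ (both are standard Brownian motions in law), the quantity of interest collapses to
\begin{equation*}
\E\{f(M_1) - f(W_1)\} = \E\{f(\tilde{W}_T) - f(\tilde{W}_1)\},
\end{equation*}
which compares $f$ evaluated at a single Brownian motion at two (random) times $T$ and $1$.

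The main step is to apply It\^o's formula to $f(\tilde{W}_s)$ between $s = 1$ and $s = T$:
\begin{equation*}
f(\tilde{W}_T) - f(\tilde{W}_1) = \int_1^{T} f'(\tilde{W}_s)\,d\tilde{W}_s + \tfrac12 \int_1^T f''(\tilde{W}_s)\,ds,
\end{equation*}
with the convention $\int_1^T = -\int_T^1$ when $T < 1$. Taking expectations, the stochastic integral has mean zero by optional sampling (using $\|f'\|_\infty \leq 1$ to control the $L^2$ norm of the integrand), while the drift term is bounded deterministically by $\tfrac12\|f''\|_\infty \E|T-1| \leq \tfrac{k}{2}\E|\langle M\rangle_1 - 1|$. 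Taking the supremum over $f$ in the admissible class yields the announced inequality; the slightly looser factor $(1\vee k)$ in place of $k/2$ is a convenient overestimate that covers the small-$k$ regime uniformly (where one may alternatively invoke $|f(x)-f(y)|\leq \|f'\|_\infty|x-y|$ combined with $\E|\tilde W_T-\tilde W_1|^2=\E|T-1|$) and simplifies downstream applications in Theorems~\ref{thm:THcorrector} and~\ref{thm:THgaussian}.

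The main technical obstacle is justifying the vanishing of the stochastic integral, since $T = \langle M\rangle_1$ need not be bounded. This is handled by the standard localization argument: truncate at $T \wedge n$, apply optional sampling at the bounded stopping times $1 \wedge (T \wedge n)$ and $1 \vee (T \wedge n)$, use the It\^o isometry together with $\|f'\|_\infty \leq 1$ to bound the truncated $L^2$ norm by $\sqrt{\E|T\wedge n - 1|}$, and pass to the limit $n \to \infty$ by dominated convergence (legitimate as soon as $\E\langle M\rangle_1 < \infty$, which is implicit in the right-hand side of the claimed inequality being finite). Beyond this integrability bookkeeping, the argument is an elementary combination of the Dambis--Dubins--Schwarz representation and It\^o's formula, with the key observation that the $C^2$-smoothness of the test class is precisely what allows the difference $\E|T-1|$ rather than its square root to control $d_{1,k}$.
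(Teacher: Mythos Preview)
Your argument is correct and arrives at the same (in fact slightly sharper) constant, but it proceeds differently from the paper. The paper does not invoke the Dambis--Dubins--Schwarz representation. Instead it builds an explicit coupling: it defines the stopping time $\tau=\sup\{t\in[0,1]:\langle M\rangle_t\le 1\}$, runs $M$ up to $\tau$, and then appends an independent Brownian motion $b$ so that the extended martingale $\tilde M$ satisfies $\langle\tilde M\rangle_2=1$ and hence $\tilde M_2\sim N(0,1)$. The distance is then split via the triangle inequality as $d_{1,k}(M_1,M_\tau)+d_{1,k}(M_\tau,\tilde M_2)$, and each piece is bounded by a second-order Taylor estimate together with the martingale property (so that the first-order cross term $\E\{(M_1-M_\tau)f'(M_\tau)\}$ vanishes). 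The two pieces contribute $\tfrac{k}{2}\E\{(\langle M\rangle_1-1)^+\}$ and $\tfrac{k}{2}\E\{(1-\langle M\rangle_1)^+\}$, summing to the same $\tfrac{k}{2}\E|\langle M\rangle_1-1|$ that your It\^o computation yields.

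Your route is cleaner once one is willing to quote DDS, and it makes transparent why the answer should be $\E|\langle M\rangle_1-1|$ rather than its square root: the It\^o drift is first order in time. The paper's route is more self-contained and avoids any subtlety about whether $T=\langle M\rangle_1$ is a stopping time for the time-changed filtration, at the cost of constructing the coupling by hand. One small remark: your parenthetical justification of the factor $1\vee k$ via the Lipschitz bound actually produces $\sqrt{\E|T-1|}$ rather than $\E|T-1|$, so it does not literally give the ``$1$'' in $1\vee k$; but this is moot, since your main estimate $\tfrac{k}{2}\E|T-1|$ is already dominated by $(1\vee k)\E|T-1|$ for every $k>0$.
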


For the sake of convenience, we present the proof in the Appendix.

Since $\sigma_\lambda^2=\sum_{k=1}^d \langle D_k\Phi_\lambda,D_k\Phi_\lambda\rangle$, by Proposition \ref{prop:qmCLT} we have for almost every $\omega\in\Omega$:
\begin{equation}
\begin{aligned}
&|\E_B\{e^{i(\eps \xi\cdot B_{t/\eps^2}+M^\eps_t)}\}-e^{-\frac12(|\xi|^2+\sigma_\lambda^2)t}|\\
\leq &\left(1\vee \frac{1}{\sqrt{(|\xi|^2+\sigma_\lambda^2)t}}\right)\E_B\{|\eps^2\int_0^{t/\eps^2}\sum_{k=1}^d(\xi_k+D_k\Phi_\lambda(y_s))^2ds-(|\xi|^2+\sigma_\lambda^2)t|\}.
\end{aligned}
\end{equation}
Now we can write
$
|\mathcal{E}_2|\leq \mathcal{E}_5+\mathcal{E}_6,
$
where
\begin{eqnarray*}
\mathcal{E}_5&=& \frac{1}{(2\pi)^d}\int_{\R^d}|\hat{f}(\xi)|\left(1\vee \frac{1}{\sqrt{(|\xi|^2+\sigma_\lambda^2)t}}\right)\E_B\{|\eps^2\int_0^{t/\eps^2}\sum_{k=1}^d(\xi_k+D_k\Phi_\lambda(y_s))^2ds-(|\xi|^2+\sigma_\lambda^2)t|\}d\xi,\\
\mathcal{E}_6&=&\frac{1}{(2\pi)^d}\int_{\R^d}|\hat{f}(\xi)| |e^{-\frac12(|\xi|^2+\sigma_\lambda^2)t}-e^{-\frac12(|\xi|^2+\sigma^2)t}|d\xi.
\end{eqnarray*}

First, we have \begin{equation}
\mathcal{E}_6\les |\sigma_\lambda^2-\sigma^2|t.
\label{eq:deterErr}
\end{equation}

Secondly, we rewrite
\begin{equation}
\begin{aligned}
\mathcal{E}_5%=&\frac{1}{(2\pi)^d}\int_{\R^d}|\hat{f}(\xi)|\left(1\vee \frac{1}{\sqrt{(|\xi|^2+\sigma_\lambda^2)t}}\right)\E_B\{|\eps^2\int_0^{t/\eps^2}\sum_{k=1}^d(\xi_k+D_k\Phi_\lambda(y_s))^2ds-(|\xi|^2+\sigma_\lambda^2)t|\}d\xi\\
 = \frac{1}{(2\pi)^d}\int_{\R^d}|\hat{f}(\xi)|\left(1\vee \frac{1}{\sqrt{(|\xi|^2+\sigma_\lambda^2)t}}\right)\E_B\{|\eps^2\int_0^{t/\eps^2}Z_{\lambda,\xi}(B_s)ds|\}d\xi,
\end{aligned}
\end{equation}
where $$Z_{\lambda,\xi}(x):=\sum_{k=1}^d(\xi_k+\int_{\R^d}\partial_{x_k}G_\lambda(x-y)V(y)dy)^2-|\xi|^2-\sigma_\lambda^2,$$ with $G_\lambda$ the Green's function of $\lambda-\frac12\Delta$. Note that we have used the fact that $$D_k\Phi_\lambda(\tau_x\omega)=\int_{\R^d}\partial_{x_k}G_\lambda(x-y)V(y)dy.$$ Clearly $Z_{\lambda,\xi}$ has zero mean; and by the ergodic theorem, we expect $\eps^2\int_0^{t/\eps^2}Z_{\lambda,\xi}(B_s)ds$ to be small. This is quantified by the following control of the variance of Brownian motion in random scenery.

\begin{lemma}
If $V$ is a mean zero, stationary random field with covariance function $R(x)$, and $B_s$ is Brownian motion independent from $V$, then
\begin{equation}
\E\E_{B}\{\left(\eps\int_0^{t/\eps^2}V(B_s)ds\right)^2\}\les t\int_{\R^d}\frac{|R(x)|}{|x|^{d-2}}dx.
\end{equation}
\label{lem:VarBMRS}
\end{lemma}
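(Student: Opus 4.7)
The plan is to compute the double integral directly using Fubini and the stationarity of Brownian increments, and then to recognize the Green's function of $-\frac12\Delta$ on $\R^d$ inside the resulting time integral. Set $T=t/\eps^2$ for brevity.

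First, I would apply Fubini (everything is jointly measurable and integrable because $R$ is continuous and bounded by $R(0)$) to write
\begin{equation*}
\E\E_B\Bigl\{\Bigl(\eps\int_0^{T}V(B_s)ds\Bigr)^2\Bigr\}
=\eps^2\int_0^T\!\!\int_0^T \E_B\{\E[V(B_s)V(B_u)]\}\,du\,ds
=\eps^2\int_0^T\!\!\int_0^T \E_B\{R(B_u-B_s)\}\,du\,ds.
\end{equation*}
Next, since Brownian increments are stationary, $B_u-B_s$ is distributed as $B_{|u-s|}$, so $\E_B\{R(B_u-B_s)\}=\int_{\R^d}R(x)q_{|u-s|}(x)dx$. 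Passing to absolute values and symmetrizing in $u,s$ gives
\begin{equation*}
\E\E_B\Bigl\{\Bigl(\eps\int_0^{T}V(B_s)ds\Bigr)^2\Bigr\}
\leq 2\eps^2\int_0^T(T-r)\int_{\R^d}|R(x)|q_r(x)\,dx\,dr
\leq 2t\int_0^T\int_{\R^d}|R(x)|q_r(x)\,dx\,dr.
\end{equation*}

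The key step is to bound the inner time integral by extending to $r\in(0,\infty)$ and invoking the identity
\begin{equation*}
\int_0^\infty q_r(x)\,dr=\frac{\Gamma(\tfrac{d}{2}-1)}{2\pi^{d/2}}\,\frac{1}{|x|^{d-2}},
\end{equation*}
which holds precisely because $d\geq 3$ (for $d=1,2$ the integral diverges, and $\frac12\Delta$ is not Green-invertible). This identity is a short computation via the substitution $u=|x|^2/(2r)$. Substituting and applying Fubini one more time yields
\begin{equation*}
\E\E_B\Bigl\{\Bigl(\eps\int_0^{T}V(B_s)ds\Bigr)^2\Bigr\}
\leq 2t\int_{\R^d}|R(x)|\int_0^\infty q_r(x)\,dr\,dx
\lesssim t\int_{\R^d}\frac{|R(x)|}{|x|^{d-2}}\,dx,
\end{equation*}
which is the claimed bound. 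No obstacle is encountered beyond the Green's function computation and checking that Fubini applies, both routine.
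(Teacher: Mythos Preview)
Your proof is correct and follows essentially the same route as the paper's: expand the square, use the heat kernel for the Brownian increment, change to the time-difference variable, bound $\eps^2(T-r)\le t$, and recognize $\int_0^\infty q_r(x)\,dr$ as the Green's function of $-\tfrac12\Delta$ in $d\ge3$. The only cosmetic difference is that the paper performs the substitution $\lambda=|x|^2/(2u)$ explicitly rather than quoting the Green's function identity, and takes absolute values at the end rather than early on.
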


\begin{proof}
By direct calculation, we have
\begin{equation}
\begin{aligned}
\E\E_{B}\{\left(\eps\int_0^{t/\eps^2}V(B_s)ds\right)^2\}
=&2\eps^2\int_0^{t/\eps^2}\int_0^s\int_{\R^d}R(x)\frac{1}{(2\pi u)^{\frac{d}{2}}}e^{-\frac{|x|^2}{2u}}dxduds\\
=&2\eps^2\int_0^\infty du(\frac{t}{\eps^2}-u)1_{u<\frac{t}{\eps^2}}\int_{\R^d}R(x)\frac{1}{(2\pi u)^{\frac{d}{2}}}e^{-\frac{|x|^2}{2u}}dx\\
=&\eps^2\int_0^\infty d\lambda(\frac{t}{\eps^2}-\frac{|x|^2}{2\lambda})1_{\frac{|x|^2}{2\lambda}<\frac{t}{\eps^2}}\lambda^{\frac{d}{2}-2}e^{-\lambda}\int_{\R^d}\frac{1}{\pi^{\frac{d}{2}}}R(x)\frac{1}{|x|^{d-2}}dx\\
\les& t\int_{\R^d}\frac{|R(x)|}{|x|^{d-2}}dx.
\end{aligned}
\end{equation}
\end{proof}

Now we write $Z_{\lambda,\xi}(x)=Z_{1,\lambda,\xi}(x)+Z_{2,\lambda,\xi}(x)$ with
\begin{eqnarray}
Z_{1,\lambda,\xi}(x)&=&\sum_{k=1}^d\left(\int_{\R^d}\partial_{x_k}G_\lambda(x-y)V(y)dy\right)^2-\sigma_\lambda^2,\\
Z_{2,\lambda,\xi}(x)&=&2\sum_{k=1}^d \xi_k\int_{\R^d}\partial_{x_k}G_\lambda(x-y)V(y)dy.
\end{eqnarray}
Since $\sigma_\lambda^2=\sum_{k=1}^d\langle D_k\Phi_\lambda,D_k\Phi_\lambda\rangle$, we have $\E\{Z_{i,\lambda,\xi}(x)\}=0, i=1,2$. Therefore, Lemma \ref{lem:VarBMRS} implies\begin{equation}
\E\{\mathcal{E}_5\}\les \frac{\eps}{(2\pi)^d}\int_{\R^d}|\hat{f}(\xi)|\left(1\vee \frac{1}{\sqrt{(|\xi|^2+\sigma_\lambda^2)t}}\right)\sqrt{t\int_{\R^d}\frac{|\RR_{1,\lambda,\xi}(x)|+|\RR_{2,\lambda,\xi}(x)|}{|x|^{d-2}}dx}d\xi
\end{equation}
where $\RR_{i,\lambda,\xi}(x):=\E\{Z_{i,\lambda,\xi}(0)Z_{i,\lambda,\xi}(x)\}, i=1,2$. By recalling \eqref{eq:EsReps} and \eqref{eq:deterErr}, we have\begin{equation}
\begin{aligned}
&\E\{|u_\eps(t,x)-u_{hom}(t,x)|\}\\
\les &\left(\sqrt{\lambda\langle \Phi_\lambda,\Phi_\lambda\rangle}+|\sigma_\lambda^2-\sigma^2|+\eps\int_{\R^d}\hat{f}(\xi)\sqrt{\int_{\R^d}\frac{|\RR_{1,\lambda,\xi}(x)|+|\RR_{2,\lambda,\xi}(x)|}{|x|^{d-2}}dx}d\xi\right)(1+t).
\end{aligned}
\label{eq:Es3terms}
\end{equation}

The estimation of $\RR_{i,\lambda,\xi}$ is done for strongly mixing potentials and long-range-correlated Gaussian potentials respectively in the following sections.

%%%%%%%%%%%%%%%%%%%%%%%%%%%%%%%%%%%%%%%%%%%%%%
\subsubsection{Strongly mixing case: proof of Theorem \ref{thm:THcorrector}}

Defining 
\begin{equation}
F_{\lambda,c,\beta}(x):=\lambda^{\frac{d}{2}-1}e^{-c\sqrt{\lambda}|x|}+1\wedge \frac{e^{-c\sqrt{\lambda}|x|}}{|x|^{d-2}}+1\wedge\frac{1}{|x|^\beta}
\end{equation}
for $c,\beta>0$, we have the following result.
\begin{proposition}
Under Assumption \ref{ass:mixing}, there exist a constant $c>0$ and a sufficiently large $\beta>0$ such that
\begin{equation}
|\RR_{1,\lambda,\xi}(x)|+|\RR_{2,\lambda,\xi}(x)|\les
(1+|\xi|)^2F_{\lambda,c,\beta}(x).
\end{equation}
\label{prop:boundCOV}
\end{proposition}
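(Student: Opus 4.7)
The plan is to estimate $\RR_{2,\lambda,\xi}$ and $\RR_{1,\lambda,\xi}$ separately, leveraging the representation $D_k\Phi_\lambda(\tau_x\omega) = \int_{\R^d}\partial_k G_\lambda(x-y)V(y)dy$ noted in the text. The analytic inputs I would rely on throughout are the Yukawa estimates $|G_\lambda(x)| \les |x|^{2-d}e^{-c\sqrt\lambda|x|}$ and $|\partial_k G_\lambda(x)| \les |x|^{1-d}e^{-c\sqrt\lambda|x|}$ for some $c>0$ depending only on $d$, together with two consequences of Assumption \ref{ass:mixing}: the pointwise bound $|R(y)| \les 1\wedge |y|^{-\beta}$ for any $\beta>0$, and a fourth-cumulant bound $|\mathrm{cum}_4(V(y_1),\dots,V(y_4))| \les \rho(r)$ where $r$ measures the largest gap when one point or one pair is split off from the other points; this is the standard covariance-inequality consequence of strong mixing combined with $\E\{V^6\}<\infty$.

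For $\RR_2$: expand using Fubini and stationarity to
\[
\RR_{2,\lambda,\xi}(x) = 4\sum_{k,l}\xi_k\xi_l \int\int \partial_k G_\lambda(-y_1)\partial_l G_\lambda(x-y_2)R(y_1-y_2)\,dy_1 dy_2.
\]
I would then partition the integration domain according to which of $|y_1|$, $|x-y_2|$, $|y_1-y_2|$ dominates. On near-diagonal regions the Green's function exponentials combine to produce a Yukawa kernel in $x$, accounting for the $1\wedge e^{-c\sqrt\lambda|x|}|x|^{2-d}$ piece of $F_{\lambda,c,\beta}$, while on the far-diagonal region the mixing decay $|R(y_1-y_2)| \les 1\wedge|x|^{-\beta}$ extracts the $1\wedge|x|^{-\beta}$ piece; after pulling out the prefactor $|\xi|^2$, the stated bound follows.

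For $\RR_1$: the identity $\sigma_\lambda^2 = \sum_k\int\int\partial_k G_\lambda(-y_1)\partial_k G_\lambda(-y_2)R(y_1-y_2)dy_1dy_2$ lets me rewrite
\[
Z_{1,\lambda,\xi}(x) = \sum_k \int\int \partial_k G_\lambda(x-y_1)\partial_k G_\lambda(x-y_2)\bigl[V(y_1)V(y_2)-R(y_1-y_2)\bigr] dy_1 dy_2.
\]
Then $\RR_{1,\lambda,\xi}(x)$ becomes a quadruple integral of four kernels $\partial G_\lambda$ against the joint cumulant $C(\vec y) = R(y_{13})R(y_{24}) + R(y_{14})R(y_{23}) + \mathrm{cum}_4(V_1,V_2,V_3,V_4)$. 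The two Wick pairings produce convolutions of the form $(\partial G_\lambda \ast R \ast \partial G_\lambda)$ evaluated at shifts of $0$ and $x$; after integrating out the remaining variables the $x$-dependence is dominated by a Yukawa kernel, yielding both the $1\wedge e^{-c\sqrt\lambda|x|}|x|^{2-d}$ term (in the regime where the two clusters of $y_i$ sit near $0$ and $x$, respectively, at separation $|x|$) and the $\lambda^{d/2-1}e^{-c\sqrt\lambda|x|}$ term (in the regime where all four $y_i$ are confined to the intrinsic length scale $\lambda^{-1/2}$, so that the integration volume scales as a negative power of $\lambda$ offset by a positive power of $\lambda^{-1/2}$ coming from the $|y|^{1-d}$ singularities of $\partial G_\lambda$).

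The main obstacle is the fourth-cumulant contribution: I need to propagate the decay of $\mathrm{cum}_4$ through four Green's-function convolutions. The strategy is to partition $\R^{4d}$ by which single point or which pair is isolated from the rest, extract the mixing factor $\rho(r)$ at a distance $r$ comparable to $|x|$, and control the remaining kernel integrals via the exponential tails of $\partial G_\lambda$. Because Assumption \ref{ass:mixing} supplies $\rho(r) \les r^{-\beta}$ for arbitrarily large $\beta$, there is always enough room to absorb any polynomial loss produced by the kernel integrations, provided $\beta$ is taken sufficiently large in terms of $d$; this yields the $1\wedge|x|^{-\beta}$ contribution to $F_{\lambda,c,\beta}$. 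Combining the three contributions, adjusting $c>0$ to a common exponential constant, and noting the $(1+|\xi|)^2$ prefactor completes the bound.
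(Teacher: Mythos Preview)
Your overall strategy is sound and would lead to the stated bound, but it takes a longer route than the paper's, and in one place you make the problem harder than it is. Two differences are worth noting.

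First, you decompose the centered fourth moment as (two Wick pairings) $+$ $\mathrm{cum}_4$ and then identify the cumulant as ``the main obstacle,'' to be controlled by partitioning $\R^{4d}$ according to which point or pair is isolated. The paper bypasses this entirely: Lemma~\ref{lem:34Moment} (quoted from \cite{hairer2013random}) gives directly
\[
|\E\{V_1V_2V_3V_4\}-R(x_1-x_2)R(x_3-x_4)|\le \Psi(|x_1-x_3|)\Psi(|x_2-x_4|)+\Psi(|x_1-x_4|)\Psi(|x_2-x_3|),
\]
with $\Psi(r)\lesssim 1\wedge r^{-\beta}$. This already has the \emph{factorized} two-variable structure of the Gaussian pairings, so the fourth cumulant never appears as a separate object. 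The factorization is the crucial gain: it makes the remainder $I_{mn}$ in $\RR_{1,\lambda,\xi}$ split as a perfect square of a two-fold convolution, whereas your cumulant bound $|\mathrm{cum}_4|\lesssim\rho(r)$ only gives a one-variable decay and forces you into a genuinely four-fold domain decomposition.

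Second, instead of decomposing the integration domain in the original $y$-variables, the paper uses the scaling $G_\lambda(x)=\lambda^{d/2-1}G_1(\sqrt\lambda x)$ and a change of variables to reduce both $\RR_1$ and $\RR_2$ to (the square of, respectively) the single quantity
\[
\frac{1}{\lambda}\int_{\R^{2d}}\frac{e^{-\rho|y|}}{|y|^{d-1}}\frac{e^{-\rho|z|}}{|z|^{d-1}}\Psi\Bigl(x-\frac{y-z}{\sqrt\lambda}\Bigr)\,dy\,dz,
\]
which is then bounded once and for all by $F_{\lambda,c,\beta}(x)$ via Lemma~\ref{lem:2ndMoment}. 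Your direct near-/far-diagonal splitting would recover the same three pieces of $F_{\lambda,c,\beta}$, but you would have to carry the scale $\lambda^{-1/2}$ through each region by hand and repeat the argument for $\RR_1$ and $\RR_2$ separately. The paper's route is shorter and unifies the two estimates; yours is more self-contained but requires substantially more bookkeeping, especially in the cumulant step.
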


\begin{proof}
%We assume a positive function $\Psi(x)\les 1\wedge |x|^{-\beta}$ for $\beta>0$ sufficiently large. 
We first consider $\RR_{1,\lambda,\xi}(x)$. By denoting $\phi_\lambda(x)=\int_{\R^d}G_\lambda(x-y)V(y)dy$, for any $m,n=1,\ldots,d$, we have
\begin{equation}
\begin{aligned}
&\E\{(\partial_{x_m}\phi_\lambda(0))^2(\partial_{x_n}\phi_\lambda(x))^2\}\\
=&\int_{\R^{4d}}\partial_{x_m}G_\lambda(y_1)\partial_{x_m}G_\lambda(z_1)\partial_{x_n}G_\lambda(y_2)\partial_{x_n}G_\lambda(z_2)\E\{V(-y_1)V(-z_1)V(x-y_2)V(x-z_2)\}dy_1dy_2dz_1dz_2\\
=&\int_{\R^{4d}}\partial_{x_m}G_\lambda(y_1)\partial_{x_m}G_\lambda(z_1)\partial_{x_n}G_\lambda(y_2)\partial_{x_n}G_\lambda(z_2)R(y_1-z_1)R(y_2-z_2)dy_1dy_2dz_1dz_2+I_{mn}\\
=&\|D_m\Phi_\lambda\|^2\|D_n\Phi_\lambda\|^2+I_{mn},
\end{aligned}
\end{equation}
where $I_{mn}$ are remainders in the calculation of fourth moment. By Lemma \ref{lem:34Moment}, we obtain
\begin{equation}
|I_{mn}|\leq 2\int_{\R^{4d}}|\partial_mG_\lambda(y_1)\partial_mG_\lambda(z_1)\partial_nG_\lambda(y_2)\partial_nG_\lambda(z_2)|\Psi(x-y_1+y_2)\Psi(x-z_1+z_2)dy_1dy_2dz_1dz_2,
\end{equation}
where $|\Psi(x)|\les 1\wedge |x|^{-\beta}$ for any $\beta>0$. Since $G_\lambda$ is the Green's function of $\lambda-\frac12\Delta$, by scaling property, $G_\lambda(x)=\lambda^{\frac{d}{2}-1}G_1(\sqrt{\lambda}x)$. The estimate $|\nabla G_1(x)|\les e^{-\rho|x|}|x|^{1-d}$ holds for some $\rho>0$  \cite[page 271, (6.49)]{taylor2011partial}. Therefore, by change of variables, we have
\begin{equation}
|I_{mn}|\les \left(\frac{1}{\lambda}\int_{\R^{2d}}\frac{e^{-\rho|y|}}{|y|^{d-1}}\frac{e^{-\rho|z|}}{|z|^{d-1}}\Psi(x-\frac{y-z}{\sqrt{\lambda}})dydz\right)^2.
\end{equation}
Since $\sigma_\lambda^4=\sum_{m,n=1}^d \|D_m\Phi_\lambda\|^2\|D_n\Phi_\lambda\|^2$, we derive the following estimate
\begin{equation}
|\RR_{1,\lambda,\xi}(x)|\les  \left(\frac{1}{\lambda}\int_{\R^{2d}}\frac{e^{-\rho|y|}}{|y|^{d-1}}\frac{e^{-\rho|z|}}{|z|^{d-1}}\Psi(x-\frac{y-z}{\sqrt{\lambda}})dydz\right)^2.
\end{equation}

Now we consider $\RR_{2,\lambda,\xi}(x)$. Similary, we obtain that
\begin{equation}
\begin{aligned}
|\RR_{2,\lambda,\xi}(x)|=&|4\sum_{m,n=1}^d \xi_m\xi_n\int_{\R^{2d}}\partial_mG_\lambda(y)\partial_nG_\lambda(z)R(x-y+z)dydz|\\
\les& |\xi|^2\frac{1}{\lambda}\int_{\R^{2d}}\frac{e^{-\rho|y|}}{|y|^{d-1}}\frac{e^{-\rho|z|}}{|z|^{d-1}}|R|(x-\frac{y-z}{\sqrt{\lambda}})dydz.
\end{aligned}
\end{equation}

Since $|\Psi(x)|\les 1\wedge |x|^{-\beta}$ for $\beta>0$ sufficiently large, by Lemma \ref{lem:2ndMoment}, we obtain
\begin{equation}
|\RR_{1,\lambda,\xi}(x)|+|\RR_{2,\lambda,\xi}(x)|\les(1+|\xi|)^2F_{\lambda,c,\beta}(x)
\end{equation}
for some constant $c>0$, and $\beta>0$ sufficiently large. The proof is complete.
\end{proof}

By combining Proposition \ref{prop:boundCOV} and \eqref{eq:Es3terms}, we obtain that
\begin{equation}
\begin{aligned}
&\E\{|u_\eps(t,x)-u_{hom}(t,x)|\}\\
\les &\left(\sqrt{\lambda\langle \Phi_\lambda,\Phi_\lambda\rangle}+|\sigma_\lambda^2-\sigma^2|+\eps\sqrt{\int_{\R^d}\frac{F_{\lambda,c,\beta}(x)}{|x|^{d-2}}dx}\right)(1+t).
\end{aligned}
\label{eq:Es3terms1}
\end{equation}
We also see that for the initial condition $f$, the only requirement is $|\hat{f}(\xi)| (1+|\xi|)$ being integrable.

By Proposition \ref{prop:remainder1} and \ref{prop:remainder2} and $\lambda=\eps^2$, we have under Assumption \ref{ass:mixing}
\begin{equation}
\sqrt{\lambda\langle\Phi_\lambda,\Phi_\lambda\rangle}+|\sigma_\lambda^2-\sigma^2|\les
\left\{
\begin{array}{ll}
\sqrt{\eps} & \mbox{ $d=3$},\\
\eps\sqrt{|\log \eps|} & \mbox{ $d=4$},\\
\eps & \mbox{ $d>4$}.
\end{array}\right.
\label{eq:remainderEstimate}
\end{equation}
Together with the following Lemma \ref{lem:fluctuationError} and \eqref{eq:Es3terms1}, the proof of Theorem \ref{thm:THcorrector} is complete.

\begin{lemma}
\begin{equation}
\int_{\R^d}\frac{F_{\lambda,c,\beta}(x)}{|x|^{d-2}}dx\les \left\{
\begin{array}{ll}
\lambda^{-\frac12} & d=3,\\
|\log\lambda| & d=4,\\
1 & d>4.
\end{array}\right.
\end{equation}
\label{lem:fluctuationError}
\end{lemma}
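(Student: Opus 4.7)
The plan is to treat the three summands of $F_{\lambda,c,\beta}(x)$ independently and bound
\begin{equation*}
I_j := \int_{\R^d} \frac{f_j(x)}{|x|^{d-2}}\, dx
\end{equation*}
for $f_1(x)=\lambda^{d/2-1}e^{-c\sqrt{\lambda}|x|}$, $f_2(x)=1\wedge \frac{e^{-c\sqrt{\lambda}|x|}}{|x|^{d-2}}$, and $f_3(x)=1\wedge |x|^{-\beta}$. The dependence on $\lambda$ is read off entirely from $I_1$ and $I_2$; $I_3$ is a $\lambda$-independent constant as soon as $\beta>2$ (which we may assume since Proposition \ref{prop:boundCOV} only asked for $\beta$ large enough).

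For $I_1$ I would rescale $y=\sqrt{\lambda}\,x$, which yields $I_1 = \lambda^{(d-4)/2}\int_{\R^d}\frac{e^{-c|y|}}{|y|^{d-2}}\,dy$. Since $d\geq3$, the integral is a finite constant, so $I_1\les\lambda^{-1/2}$ when $d=3$, $I_1\les 1$ when $d=4$, and $I_1\les\lambda^{(d-4)/2}\les 1$ for $d>4$ (with $\lambda<1$). For $I_2$ I would split the domain at $|x|=1$ (where the two terms inside $1\wedge(\cdot)$ cross, up to constants): on $\{|x|<1\}$ the integrand is bounded by $|x|^{-(d-2)}$, giving a finite contribution independent of $\lambda$; on $\{|x|\geq 1\}$ the integrand equals $e^{-c\sqrt{\lambda}|x|}|x|^{-2(d-2)}$, and passing to polar coordinates gives
\begin{equation*}
\int_1^\infty e^{-c\sqrt{\lambda}r}\, r^{d-1-2(d-2)}\,dr \;=\; \int_1^\infty \frac{e^{-c\sqrt{\lambda}r}}{r^{d-3}}\,dr,
\end{equation*}
which after substituting $s=\sqrt{\lambda}r$ (or direct estimation) is $O(\lambda^{-1/2})$ when $d=3$, $O(|\log\lambda|)$ when $d=4$, and $O(1)$ when $d>4$. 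Finally $I_3 \les \int_0^1 r\,dr + \int_1^\infty r^{1-\beta}\,dr < \infty$ provided $\beta>2$.

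Summing the three pieces yields exactly the claimed bound in each dimension, the dominant contribution coming from $I_1$ when $d=3$, from $I_2$ (through the logarithmic divergence $\int_1^{1/\sqrt{\lambda}}dr/r$) when $d=4$, and from the $\lambda$-independent constants when $d>4$. I do not foresee any genuine obstacle; the entire argument is a split-into-cases polar-coordinate computation together with the scaling $y=\sqrt{\lambda}x$, and the only minor care needed is verifying that the crossover radius in $1\wedge\frac{e^{-c\sqrt\lambda|x|}}{|x|^{d-2}}$ is comparable to $1$ uniformly for $\lambda<1$, which is immediate because $e^{-c\sqrt\lambda|x|}\leq1$ forces the infimum to be attained near $|x|\sim 1$.
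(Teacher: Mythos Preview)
Your proposal is correct and follows essentially the same route as the paper's own proof: splitting $F_{\lambda,c,\beta}$ into its three summands, handling $f_1$ by the rescaling $y=\sqrt{\lambda}\,x$, handling $f_2$ by splitting at $|x|=1$ and reducing to $\int_1^\infty e^{-c\sqrt{\lambda}r}r^{3-d}\,dr$, and observing that $f_3$ contributes $O(1)$ for $\beta$ large. The only cosmetic difference is that the paper does not pause to discuss the crossover radius for $f_2$, simply using $1\wedge(\cdot)\leq 1$ on $\{|x|<1\}$ and $1\wedge(\cdot)\leq(\cdot)$ on $\{|x|>1\}$, which is exactly your split.
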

\begin{proof}
Note that $1\wedge |x|^{-\beta}$ gives a term of order $1$ since $\beta$ could be sufficiently large. We first look at
\begin{equation}
\int_{\R^d}\frac{1}{|x|^{d-2}}\lambda^{\frac{d}{2}-1}e^{-c\sqrt{\lambda}|x|}dx=\lambda^{\frac{d}{2}-2}\int_{\R^d}\frac{e^{-c|y|}}{|y|^{d-2}}dy\les \lambda^{\frac{d}{2}-2}.
\end{equation}
Now we only have to deal with $1\wedge \frac{e^{-c\sqrt{\lambda}|x|}}{|x|^{d-2}}$.
\begin{equation}
\int_{\R^d}\frac{1}{|x|^{d-2}}1\wedge \frac{e^{-c\sqrt{\lambda}|x|}}{|x|^{d-2}} dx
\leq \int_{|x|<1}\frac{1}{|x|^{d-2}}dx+\int_{|x|>1}\frac{e^{-c\sqrt{\lambda}|x|}}{|x|^{2d-4}}dx.
\end{equation}
When $d>4$, RHS is bounded. When $d\leq 4$,
\begin{equation}
\int_{|x|>1}\frac{e^{-c\sqrt{\lambda}|x|}}{|x|^{2d-4}}dx=\lambda^{\frac{d-4}{2}}\int_{\sqrt{\lambda}}^\infty \frac{e^{-cr}}{r^{d-3}}dr,
\end{equation}
which concludes the proof.
\end{proof}

%%%%%%%%%%%%%%%%%%%%%%%%%%%%%%%%%%%%%%%%%%%%%%%%%%%%%%%%%%%
\subsubsection{Long-range-correlated Gaussian case: proof of Theorem \ref{thm:THgaussian}}

If we follow the proof of Proposition \ref{prop:boundCOV}, it is straightforward to check that when $V$ is Gaussian, the following estimate holds:
\begin{equation}
|\RR_{1,\lambda,\xi}(x)|+|\RR_{2,\lambda,\xi}(x)|\les (1+|\xi|)^2 (F_{\lambda,\rho}(x)+F_{\lambda,\rho}^2(x)),
\end{equation}
with $$F_{\lambda,\rho}(x):=\frac{1}{\lambda}\int_{\R^{2d}}\frac{e^{-\rho|y|}}{|y|^{d-1}}\frac{e^{-\rho|z|}}{|z|^{d-1}}|R|(x-\frac{y-z}{\sqrt{\lambda}})dydz.$$ From \eqref{eq:Es3terms}, we have
\begin{equation}
\begin{aligned}
&\E\{|u_\eps(t,x)-u_{hom}(t,x)|\}\\
\les &\left(\sqrt{\lambda\langle \Phi_\lambda,\Phi_\lambda\rangle}+|\sigma_\lambda^2-\sigma^2|+\eps\sqrt{\int_{\R^d}\frac{F_{\lambda,\rho}(x)+F_{\lambda,\rho}^2(x)}{|x|^{d-2}}dx}\right)(1+t),
\end{aligned}
\end{equation}
then Theorem \ref{thm:THgaussian} comes from Lemma \ref{lem:gaussianConvolution} and Proposition \ref{prop:remainder1}, \ref{prop:remainder2}.

\section{Proof of convergence to SPDE}
\label{sec:spde}

From the proof of Theorem \ref{thm:THhomo}, we see that the key assumption for homogenization to occur besides stationarity and ergodicity is the integrability of $\hat{R}(\xi)|\xi|^{-2}$. In other words, $R(x)$ has to decays faster than $|x|^{-2}$ at infinity. In this section, we go beyond Assumption \ref{ass:finiteVar} by assuming $R(x)$ decays sufficiently slowly, and prove the transition to stochasticity from homogenization.

First, we recall that the $n-$th order Hermite polynomial is defined as \begin{equation}
H_n(x)=(-1)^n\exp(\frac{x^2}{2})\frac{d^n}{dx^n}\exp(-\frac{x^2}{2}),
\end{equation} and it has the property that
\begin{equation}
\E\{H_m(X)H_n(Y)\}=\left\{
\begin{array}{ll}
n! (\E\{XY\})^n & m=n,\\
0 & m\neq n,
\end{array}\right.
\end{equation}
if $X,Y\sim N(0,1)$ and are jointly Gaussian.

Under Assumption \ref{ass:longRangeGauss}, $g(x)$ is a stationary Gaussian field with zero mean and unit variance, so we can expand $V$ in Hermite polynomials \cite[Section 3]{taqqu1975weak}:
\begin{equation}
V(x)=\Phi(g(x))=\sum_{n=0}^\infty \frac{V_n}{n!}H_n(g(x)),
\end{equation}
where $V_n=\E\{H_n(g(x))\Phi(g(x))\}$. By the assumption $V_0=0, V_1\neq 0$, we have
\begin{equation}
\begin{aligned}
R(x)=\E\{V(0)V(x)\}=&\E\{\Phi(g(0))\Phi(g(x))\}=\sum_{n=0}^\infty\frac{V_n^2}{(n!)^2}\E\{H_n(g(0))H_n(g(x))\}\\
=&\sum_{n=0}^\infty \frac{V_n^2}{n!}R_g(x)^n=V_1^2 R_g(x)+\sum_{n=2}^\infty \frac{V_n^2}{n!}R_g(x)^n.
\end{aligned}
\end{equation}
Since $\sum_{n=0}^\infty \frac{V_n^2}{n!}<\infty$, $R(x)\sim V_1^2R_g(x)$ as $|x|\to \infty$. In addition, $R_g(x)\sim c_d\prod_{i=1}^d |x_i|^{-\alpha_i}$, which leads to $R(x)\sim V_1^2c_d\prod_{i=1}^d |x_i|^{-\alpha_i}$ as $\min_{i=1,...,d}|x_i|\to \infty$. 

The assumption of $V_1\neq 0$ is crucial for the appearance of Gaussian noise in the limiting equation, and it turns out that by this assumption we can reduce the possibly non-Gaussian case to Gaussian case, namely $V(x)=g(x)$, so conditioning on $B$, $X_\eps(t):=\eps^{-\alpha/2}\int_0^t V(B_s/\eps)ds$ is Gaussian, and we can prove its weak convergence by proving convergence of the conditional mean and variance. Before that, following \cite{hu2011feynman} we define the solution to the limiting SPDE \eqref{eq:limitSPDE}.

\subsection{Limiting SPDE}
We first define the formally-written random variable $\int_0^t\dot{W}(B_s)ds=\int_0^t\int_{\R^d} \delta(x-B_s)W(dx)ds$, where $W(dx)$ is the generalized Gaussian random field independent from Brownian motion $B_t$. We use $\E$ to denote the expectation with respect to $W(dx)$, and assume that the covariance function $\E\{ W(dx)W(dy)\}=\prod_{i=1}^d |x_i-y_i|^{-\alpha_i}dxdy$. For a construction of such generalized Gaussian random field, we refer to e.g. \cite[Section 2]{hu2011feynman}. 

For Brownian motion $B$, we use $B_i(s)$ to denote its $i-$th component. Later below we shall consider a collection of several vector-valued Brownian motions. The $j-$th element of that collection will be denoted by $B^j$, and its value at time $s$ by $B^j_s$, while the value at time $s$ of its $k-$th coordinate would be $B^j_k(s)$.

\begin{proposition}
Assume $\alpha_i\in (0,1), i=1,\ldots,d$ and $\sum_{i=1}^d \alpha_i<2$ and define $Y_\eps(t)=\int_0^t \int_{\R^d}q_\eps(x-B_s)W(dx)ds$, where $q_\eps$ is the density of $N(0,\eps)$. Then $Y_\eps(t)$ converges in $L^2$ as $\eps \to 0$
to some random variable $Y(t)$, denoted as $$Y(t)=\int_0^t\dot{W}(B_s)ds=\int_0^t\int_{\R^d} \delta(x-B_s)W(dx)ds.$$ When conditioning on $B$, then $Y_t$ is a Gaussian random variable with zero mean
and variance
\begin{equation}
\E\{Y(t)^2\}=\int_0^t \int_0^t \frac{1}{\prod_{i=1}^d|B_i(s)-B_i(u)|^{\alpha_i}}dsdu.
\label{eq:varianceLongRange}
\end{equation}
\label{prop:integralNoise}
\end{proposition}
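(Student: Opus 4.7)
The plan exploits the independence of $W$ and $B$ together with the Gaussianity of both: conditional on the Brownian path, $Y_\eps(t)$ is a Wiener integral against $W$, hence centered Gaussian with explicitly computable variance, and closed-form joint moments give a short $L^2$ Cauchy argument. The conditional distribution of the limit is then identified by a Fatou plus matching-expectations argument.

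Swapping the $ds$ and $W(dx)$ integrations writes $Y_\eps(t)=\int_{\R^d}f_\eps^B(x)W(dx)$ with $f_\eps^B(x)=\int_0^t q_\eps(x-B_s)\,ds$, so conditional on $B$ this is centered Gaussian with variance
\begin{equation}
V_\eps(t,B)=\int_0^t\int_0^t (q_{2\eps}\ast R_W)(B_s-B_u)\,ds\,du,
\end{equation}
using $q_\eps\ast q_\eps=q_{2\eps}$ and $R_W(z)=\prod_{i=1}^d|z_i|^{-\alpha_i}$. Taking a second expectation over $B$ and exploiting that the coordinates of $B_s-B_u$ are independent $N(0,|s-u|)$ together with the Gaussian convolution identity $q_{2\eps}\ast p_{|s-u|}=p_{|s-u|+2\eps}$ gives the closed form
\begin{equation}
\E\E_B[Y_\eps(t)Y_\delta(t)]=C\int_0^t\int_0^t (|s-u|+\eps+\delta)^{-\alpha/2}\,ds\,du,
\end{equation}
where $C=\prod_{i=1}^d c_{\alpha_i}$ with $c_{\alpha_i}=\E|Z|^{-\alpha_i}<\infty$ for $Z\sim N(0,1)$ (finite since $\alpha_i<1$). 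Since $\alpha<2$, the kernel $|s-u|^{-\alpha/2}$ is integrable on $[0,t]^2$ and dominates the shifted kernels above for small $\eps,\delta$; dominated convergence yields $\E|Y_\eps(t)-Y_\delta(t)|^2\to 0$, so $Y_\eps(t)\to Y(t)$ in $L^2$.

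To identify the conditional law of $Y(t)$, extract an almost surely convergent subsequence $\eps_n$ and note that conditional $L^2$-limits of centered Gaussians are centered Gaussian, so $Y(t)\mid B$ is centered Gaussian with variance $\lim_n V_{\eps_n}(t,B)$ (when the limit exists). For $s\neq u$ the coordinates of $B_s-B_u$ are almost surely nonzero and $(q_{2\eps}\ast R_W)(z)\to R_W(z)$ at every such point, so Fatou's lemma applied inside the $(s,u)$-integral yields
\begin{equation}
V(t,B):=\int_0^t\int_0^t R_W(B_s-B_u)\,ds\,du\ \leq\ \liminf_n V_{\eps_n}(t,B)\ =\ \E[Y(t)^2\mid B]
\end{equation}
almost surely. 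Taking expectation over $B$, both sides have mean $C\int_0^t\int_0^t|s-u|^{-\alpha/2}\,ds\,du=\E Y(t)^2$ (the first by monotone convergence from the joint-moment formula above, the second by $L^2$ convergence of $Y_\eps\to Y$), so the inequality is an equality almost surely, giving \eqref{eq:varianceLongRange}.

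The main obstacle I anticipate is precisely this identification step: the $L^2$ Cauchy argument is essentially bookkeeping of explicit Gaussian moments, but passing from an $L^2$ limit of $Y_\eps$ to a pathwise formula for the conditional variance is delicate because conditioning is not jointly continuous and $V_\eps(t,B)$ is not obviously monotone in $\eps$. The Fatou-plus-expectation-matching route is the cleanest I see; it succeeds precisely because the hypothesis $\alpha=\sum_i\alpha_i<2$ with $\alpha_i\in(0,1)$ guarantees that the candidate limit variance $V(t,B)$ is almost surely finite with finite mean.
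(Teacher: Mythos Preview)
Your proof is correct and reaches the same conclusion as the paper, but it arrives there by a genuinely different route in two places.

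For the $L^2$ Cauchy argument, the paper works directly with the kernel and invokes two auxiliary lemmas: a pointwise convergence $\int q_{\eps_1}q_{\eps_2}|z+x-y|^{-\alpha}\to|z|^{-\alpha}$ (Lemma~\ref{lem:convInalpha}) and a uniform domination $\int q_{\eps_1}(x_1+y_1)q_{\eps_2}(x_2+y_2)|y_1-y_2|^{-\alpha}dy\leq C|x_1-x_2|^{-\alpha}$ (Lemma~\ref{lem:fromJiansong}, imported from \cite{hu2011feynman}), then applies dominated convergence. You instead exploit the Gaussian convolution identities $q_\eps\ast q_\delta=q_{\eps+\delta}$ and $q_a\ast q_{|s-u|}=q_{|s-u|+a}$ to obtain the closed form $\E\E_B[Y_\eps Y_\delta]=C\int_0^t\int_0^t(|s-u|+\eps+\delta)^{-\alpha/2}\,ds\,du$, after which convergence is immediate by monotone or dominated convergence on $[0,t]^2$. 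This is shorter and entirely self-contained. For the identification of the conditional variance, the paper again uses the domination lemma to pass to the limit inside $V_\eps(t,B)$ directly by dominated convergence. Your Fatou-plus-matching-expectations argument sidesteps the need for any uniform bound: the inequality comes for free and is upgraded to equality by comparing means. The paper's route via the domination lemma is more robust (it would survive in settings where closed-form moments are unavailable), while your route is more elementary and avoids citing an external estimate. One small point you leave implicit: the claim $(q_{2\eps}\ast R_W)(z)\to R_W(z)$ at points with all coordinates nonzero is precisely the content of the paper's Lemma~\ref{lem:convInalpha}; it is standard but not entirely trivial since $R_W$ is unbounded.
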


\begin{proof}
We first point out that the RHS of \eqref{eq:varianceLongRange} is almost surely finite, and this comes from the fact that $\alpha_i\in (0,1)$ and $ \sum_{i=1}^d \alpha_i<2$ and
\begin{equation}
\E_B\{\int_0^t \int_0^t \frac{1}{\prod_{i=1}^d|B_i(s)-B_i(u)|^{\alpha_i}}dsdu\}=\int_0^t\int_0^t\frac{1}{|s-u|^{\sum_{i=1}^d \alpha_i/2}}dsdu\prod_{i=1}^d\int_{\R}|x|^{-\alpha_i}q_1(x)dx.
\end{equation}

Secondly, we calculate
\begin{equation}
\E\E_B \{Y_\eps^2(t)\}=\int_0^t\int_0^t\int_{\R^{2d}}\E_B\{q_\eps(x-B_s)q_\eps(y-B_u)\}\frac{1}{\prod_{i=1}^d |x_i-y_i|^{\alpha_i}}dxdydsdu.
\end{equation}
By Lemma \ref{lem:convInalpha} we obtain $\int_{\R^{2d}}q_\eps(x-B_s)q_\eps(y-B_u)\frac{1}{\prod_{i=1}^d |x_i-y_i|^{\alpha_i}}dxdy \to \frac{1}{\prod_{i=1}^d |B_i(s)-B_i(u)|^{\alpha_i}}$ as $\eps \to 0$.
By Lemma \ref{lem:fromJiansong} and the dominated convergence theorem, we have the convergence \begin{equation}
\E\E_B\{Y_\eps^2(t)\}\to \int_0^t\int_0^t \E_B\{\frac{1}{\prod_{i=1}^d |B_i(s)-B_i(u)|^{\alpha_i}}\}dsdu.
\end{equation}

Similarly, we can show 
\begin{equation}
\E\E_B\{Y_{\eps_1}(t)Y_{\eps_2}(t)\}\to \int_0^t\int_0^t \E_B\{\frac{1}{\prod_{i=1}^d |B_i(s)-B_i(u)|^{\alpha_i}}\}dsdu
\end{equation}
 as $\eps_1,\eps_2\to 0$. Thus, we have shown that $\{Y_\eps(t)\}$ is a Cauchy sequence in $L^2$, since $$\lim_{\eps_1,\eps_2\to 0}\E\E_B\{(Y_{\eps_1}(t)-Y_{\eps_2}(t))^2\}=0.$$ The limit is then denoted as $Y(t)=\int_0^t\dot{W}(B_s)ds=\int_0^t\int_{\R^d} \delta(x-B_s)W(dx)ds$.

Next, we consider the conditional distribution. Since $Y_\eps(t)\to Y(t)$ in $L^2$, there exists a subsequence $\eps_k$ such that $Y_{\eps_k}(t)\to Y(t)$ almost surely. Note that $W(dx)$ and $B_t$ are independent, so the probability space is the product space. Then we know that conditioning on the Brownian motion, $Y_{\eps_k}(t)\to Y(t)$ almost surely as $k\to \infty$, and this leads to convergence in distribution. Given $B$, $Y_\eps(t)$ is Gaussian with variance \begin{equation}
\begin{aligned}
\E \{Y_\eps^2(t)\}=&\int_0^t\int_0^t\int_{\R^{2d}}q_\eps(x-B_s)q_\eps(y-B_u)\frac{1}{\prod_{i=1}^d |x_i-y_i|^{\alpha_i}}dxdydsdu\\
\to &\int_0^t\int_0^t\frac{1}{\prod_{i=1}^d |B_i(s)-B_i(u)|^{\alpha_i}}dsdu.
\end{aligned}
\end{equation}
  The proof is complete.
\end{proof}

\begin{remark}
If we define $Y^i(t)=\int_0^t \int_{\R^d}\delta(x-B_s^i)W(dx)ds$ for independent Brownian motions $B^1,B^2$, the same proof implies that $Y^1(t),Y^2(t)$ are jointly Gaussian with covariance function given by $\E\{Y^1(t)Y^2(t)\}=\int_0^t\int_0^t \prod_{i=1}^d |B_i^1(s)-B_i^2(u)|^{-\alpha_i}dsdu$ when conditioning on $B^1,B^2$.
\label{remark:covariance}
\end{remark}

\begin{remark}
By the same discussion as in Proposition \ref{prop:integralNoise}, we can define random variable $\int_0^t \int_{\R^d}\delta(y-x-B_s)W(dy)ds$ as the $L^2$ limit of $\int_0^t \int_{\R^d}q_\eps(y-x-B_s)W(dy)ds$ for any $x\in \R^d$. It is straightforward to check that the joint distribution of 
$\int_0^t \int_{\R^d}\delta(y-x-B_s^1)W(dy)ds,\ldots,\int_0^t \int_{\R^d}\delta(y-x-B_s^N)W(dy)ds$ does not depend on $x$, where $B^i,i=1,\ldots,N$ are independent Brownian motions.
\label{remark:diffStart}
\end{remark}

With the random variables $\int_0^t \int_{\R^d}\delta(y-x-B_s)W(dy)ds$ for any $x\in \R^d$, the solution to the SPDE \begin{equation}
\partial_t u=\frac{1}{2}\Delta u+i\dot{W}u
\label{eq:defSPDE}
\end{equation}
with initial condition $u(0,x)=f(x)$ is formally written by Feynman-Kac formula as 
\begin{equation}
u(t,x)=\E_B\{f(x+B_t)\exp(i\int_0^t\int_{\R^d}\delta(y-x-B_s)W(dy)ds)\}.
\end{equation}
We point that the $u(t,x)$ defined as above coincides with the usual definition of weak solution to SPDE \eqref{eq:defSPDE}:
\begin{definition}
A random field $u(t,x)$ is a weak solution to \eqref{eq:defSPDE} if for any $\mathcal{C}^\infty$ function $\phi$ with compact support we have
\begin{equation}
\int_{\R^d} u(t,x)\phi(x)dx=\int_{\R^d}f(x)\phi(x)dx+\frac{1}{2}\int_0^t\int_{\R^d}u(s,x)\Delta \phi(x)dxds+i\int_{\R^d}\int_0^tu(s,x)\phi(x)dsW(dx),
\end{equation}
where the stochastic integral on the r.h.s. of the above display is understood as a Stratonovich type integral whose meaning is given in Definition 4.1 of \cite{hu2011feynman}.
\end{definition}

\begin{proposition}
If $\alpha_i\in (0,1), i=1,\ldots,d$ and $\sum_{i=1}^d \alpha_i<2$, $u(t,x)$ is a weak solution to \eqref{eq:defSPDE}.
\end{proposition}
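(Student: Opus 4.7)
The plan is to regularize the noise $W$ by convolution with the heat kernel $q_\eps$, apply the classical Feynman-Kac theorem to the resulting smooth PDE, and then pass to the limit as $\eps \to 0$, identifying the limiting stochastic integral with the Stratonovich integral in the sense of \cite[Definition 4.1]{hu2011feynman}.

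First, define the mollified noise $W_\eps(x):=\int_{\R^d} q_\eps(y-x) W(dy)$, which for a.e.\ realization of $W$ has $C^\infty$ sample paths. Setting
\begin{equation*}
u_\eps(t,x):=\E_B\{f(x+B_t)\exp(i\int_0^t W_\eps(x+B_s)ds)\},
\end{equation*}
the classical Feynman-Kac theorem yields that $u_\eps$ solves $\partial_t u_\eps=\tfrac12\Delta u_\eps+iW_\eps u_\eps$ with $u_\eps(0,\cdot)=f$ in the strong sense, hence weakly: for every $\phi\in \C_c^\infty(\R^d)$,
\begin{equation*}
\int u_\eps(t,x)\phi(x)dx = \int f(x)\phi(x)dx + \tfrac12\int_0^t\!\!\int u_\eps(s,x)\Delta\phi(x)dxds + i\int_0^t\!\!\int u_\eps(s,x)\phi(x)W_\eps(x)dxds.
\end{equation*}

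Second, I show $u_\eps(t,x)\to u(t,x)$ in $L^2$. Writing $X_\eps:=\int_0^t W_\eps(x+B_s)ds$ and $X:=\int_0^t\int_{\R^d} \delta(y-x-B_s)W(dy)ds$, Proposition \ref{prop:integralNoise} (applied conditionally on $B$ and then averaged) gives $\E\E_B|X_\eps-X|^2\to 0$. Using $|e^{iX_\eps}-e^{iX}|\leq |X_\eps-X|\wedge 2$, the bound $|f|\leq\|f\|_\infty$, Cauchy-Schwarz and Fubini,
\begin{equation*}
\E|u_\eps(t,x)-u(t,x)|^2 \leq \|f\|_\infty^2\, \E\E_B|X_\eps-X|^2 \to 0.
\end{equation*}
Combined with the uniform bound $|u_\eps|\leq\|f\|_\infty$ and Fubini, this suffices to pass to the limit in the two deterministic terms of the weak formulation.

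Third, I treat the stochastic term. Writing $W_\eps(x)=\int q_\eps(y-x)W(dy)$ and exchanging orders of integration (pathwise in $W$ viewed as a random distribution), the stochastic term at level $\eps$ becomes $i\int_{\R^d}\Psi_\eps(y)W(dy)$ with $\Psi_\eps(y):=\int_0^t\int u_\eps(s,x)\phi(x)q_\eps(y-x)dxds$. As $\eps\to 0$, the $L^2$ convergence $u_\eps\to u$ combined with the approximation-of-identity property of $q_\eps$ shows that $\Psi_\eps(y)\to\int_0^t u(s,y)\phi(y)ds$ in the $L^2$ sense required to pass to the limit against $W(dy)$. By \cite[Definition 4.1]{hu2011feynman}, the Stratonovich integral $\int\int_0^t u(s,y)\phi(y)ds\,W(dy)$ is defined as precisely this $L^2$-limit of $\int\Psi_\eps(y)W(dy)$, so the identification is immediate and the weak formulation is obtained.

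The main obstacle will be the rigorous $L^2$-control of the convergence in the stochastic term, since $\Psi_\eps$ depends nonlinearly on $W$ through $u_\eps$. The variance of $\int\Psi_\eps(y)W(dy)$ expands into a quadruple integral against the covariance kernel $\prod_i|y_i-y'_i|^{-\alpha_i}$ involving $u_\eps(s,x)\overline{u_\eps(s',x')}$ and four factors of $q_\eps$. Using $|u_\eps|\leq\|f\|_\infty$, the Brownian representation of $u_\eps$, and the bounds of Lemma \ref{lem:convInalpha} and Lemma \ref{lem:fromJiansong}, these estimates ultimately reduce to finiteness of $\E_B\int_0^t\!\!\int_0^t\prod_i|B_i(s)-B_i(u)|^{-\alpha_i}dsdu$, which holds under the assumption $\alpha_i\in(0,1)$, $\sum_i\alpha_i<2$.
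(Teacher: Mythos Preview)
The paper does not give its own proof of this proposition; it simply asserts that the argument is a direct adaptation of Theorem~4.3 in \cite{hu2011feynman} and omits the details. Your regularize-then-pass-to-the-limit outline is exactly that adaptation, and you have correctly isolated the one nontrivial point---the $L^2$ control of the stochastic term when $\Psi_\eps$ depends on $W$ through $u_\eps$---which is handled in \cite{hu2011feynman} by the same covariance-kernel estimates you invoke via Lemmas~\ref{lem:convInalpha} and~\ref{lem:fromJiansong}.
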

The proof is a direct adaption of Theorem 4.3 in \cite{hu2011feynman}, and we do not present it here.

\subsection{Convergence to a stochastic equation: proof of Theorem \ref{thm:THspde}}

First we reduce $V(x)=\Phi(g(x))$ to the Gaussian case by the following lemma:

\begin{lemma}
In the annealed sense, $\mathcal{X}_\eps(t):=\eps^{-\alpha/2}\int_0^t (\Phi(g(B_s/\eps))-V_1g(B_s/\eps))ds\to 0$ in probability as $\eps \to 0$.
\label{lem:reduction}
\end{lemma}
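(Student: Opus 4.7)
My plan is to establish convergence of $\mathcal{X}_\eps(t)\to 0$ in $L^2$ under the product (annealed) measure, which then implies convergence in probability. Expanding $\Phi$ in Hermite polynomials and using $V_0=0$ together with $H_1(x)=x$ gives
$$\Phi(g)-V_1g \;=\; \sum_{n\ge 2}\frac{V_n}{n!}H_n(g).$$
Applying the Hermite orthogonality identity $\E\{H_m(X)H_n(Y)\}=\delta_{m,n}\,n!\,(\E XY)^n$ to the jointly Gaussian pair $(g(B_s/\eps),g(B_u/\eps))$, whose covariance is $R_g((B_s-B_u)/\eps)$, I would compute
$$\E\E_B\{|\mathcal{X}_\eps(t)|^2\} \;=\; \eps^{-\alpha}\sum_{n\ge 2}\frac{V_n^2}{n!}\int_0^t\!\!\int_0^t \E_B\{R_g((B_s-B_u)/\eps)^n\}\,ds\,du.$$

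Next I use the uniform bound $|R_g|\le 1$ (so $|R_g|^n\le |R_g|^2$ for $n\ge 2$) together with $\sum_{n\ge 2}V_n^2/n!\le \E\{\Phi(g)^2\}<\infty$ to dominate the above by a constant multiple of the $n=2$ contribution
$$\eps^{-\alpha}\int_0^t\!\!\int_0^t \E_B\{R_g((B_s-B_u)/\eps)^2\}\,ds\,du,$$
so it suffices to show this quantity vanishes as $\eps\to 0$.

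For the remaining estimate I would use the Brownian scaling $(B_s-B_u)/\eps \stackrel{d}{=} (\sqrt{|s-u|}/\eps)\,Z$ with $Z\sim N(0,I_d)$ and the product bound $|R_g(y)|^2\les \prod_{i=1}^d \min(1,|y_i|^{-2\alpha_i})$. By independence of the coordinates of $Z$, the expectation factorises into one-dimensional Gaussian moments whose asymptotics yield
$$\E_B\{R_g((B_s-B_u)/\eps)^2\}\;\les\;\min\!\Bigl(1,(\eps^2/|s-u|)^\kappa\Bigr),\qquad \kappa:=\sum_{i=1}^d \bigl(\alpha_i\wedge \tfrac{1}{2}\bigr).$$
Since $\alpha_i>0$, one has the strict inequality $\kappa>\alpha/2$; this is precisely the improvement conferred by Hermite rank $\ge 2$. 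Splitting the $(s,u)$ integral into $\{|s-u|\le \eps^2\}$ (where the integrand is $\le 1$) and $\{|s-u|>\eps^2\}$ (where the above decay estimate applies) and using $\alpha<2$ then gives the bound $\eps^{2-\alpha}$, with a possible $|\log\eps|$ factor in the critical case $\kappa=1$, which vanishes as $\eps\to 0$.

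The main obstacle will be Step~3: the sharp evaluation of the one-dimensional moments $\E\{\min(1,|cZ_i|^{-2\alpha_i})\}$ as $c\to 0$, whose asymptotics bifurcate at $2\alpha_i=1$, and the careful assembly of these into the product bound yielding $\kappa>\alpha/2$. Handling the borderline cases where $\kappa=1$ (double-integral logarithmic divergence) or where several $\alpha_i$ lie on both sides of $1/2$ is the most delicate piece of bookkeeping; everything else is algebra once the Hermite expansion has trimmed the rank-$1$ component.
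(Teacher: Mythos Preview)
Your proposal is correct and shares the first two steps with the paper's proof: the Hermite expansion to kill the rank-one part, and the domination of $\sum_{n\ge 2}\frac{V_n^2}{n!}R_g^n$ by a constant times $R_g^2$. The divergence is in how the remaining quantity
\[
\eps^{-\alpha}\int_0^t\!\!\int_0^t \E_B\bigl\{R_g((B_s-B_u)/\eps)^2\bigr\}\,ds\,du
\]
is shown to vanish. The paper does \emph{not} compute the Gaussian expectation coordinate-wise; instead it splits on the event $\{|B_s-B_u|\gtrless M\eps\}$, bounds the far part by $\sup_{|x|\ge M}|R_g(x)|$ times the (finite) annealed integral of $\prod_i|B_i(s)-B_i(u)|^{-\alpha_i}$, and controls the near part via the small-ball estimate $\eps^{-\alpha}\int_0^t\int_0^t\Pb(|B_s-B_u|\le M\eps)\,ds\,du\to 0$ (Lemma~\ref{lem:BsMinusBu}), then sends $\eps\to 0$ followed by $M\to\infty$. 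Your route instead evaluates the factorised moments $\E\{\min(1,|cZ_i|^{-2\alpha_i})\}$ and assembles them into the single-scale bound $(\eps^2/|s-u|)^\kappa$ with $\kappa=\sum_i(\alpha_i\wedge\tfrac12)$. This is more quantitative (it yields an explicit rate $\eps^{2\kappa-\alpha}$, up to logarithms) and does not rely on the $d\ge 3$ hypothesis that enters the paper's small-ball lemma; on the other hand it requires the case analysis at $\alpha_i=\tfrac12$ that you flag. One small correction: the strict inequality $\kappa>\alpha/2$ uses not only $\alpha_i>0$ (which gives $\alpha_i>\alpha_i/2$ when $\alpha_i\le\tfrac12$) but also $\alpha_i<1$ (which gives $\tfrac12>\alpha_i/2$ when $\alpha_i>\tfrac12$); both are part of Assumption~\ref{ass:longRangeGauss}.
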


\begin{proof}
Since $\Phi(g)-V_1g=\sum_{n=2}^\infty \frac{V_n}{n!}H_n(g)$ and $\sum_{n=0}^\infty \frac{V_n^2}{n!}<\infty$, we have conditionally upon $B$ that
\begin{equation}
\begin{aligned}
\E\{\mathcal{X}_\eps(t)^2\}=&\frac{1}{\eps^\alpha}\int_0^t\int_0^t \sum_{n=2}^\infty \frac{V_n^2}{n!}R_g(\frac{B_s-B_u}{\eps})^ndsdu\\
\leq &\frac{C}{\eps^\alpha} \int_0^t\int_0^t R_g(\frac{B_s-B_u}{\eps})^2dsdu
\end{aligned}
\end{equation}
for some constant $C$. Since $R_g$ is bounded and satisfies $|R_g(x)|\les \prod_{i=1}^d |x_i|^{-\alpha_i}$, we have
\begin{equation}
\begin{aligned}
\E\{\X_\eps(t)^2\}\leq& C\sup_{|x|\geq M}|R_g(x)|\int_0^t\int_0^t \frac{1}{\prod_{i=1}^d |B_i(s)-B_i(u)|^{\alpha_i}} 1_{|B_s-B_u|>M\eps}dsdu\\
+&\frac{C}{\eps^\alpha}\int_0^t\int_0^t1_{|B_s-B_u|\leq M\eps}dsdu,
\end{aligned}
\end{equation}
which leads to
\begin{equation}
\E\E_B\{\X_\eps(t)^2\}\leq C\sup_{|x|\geq M}|R_g(x)|+\frac{C}{\eps^\alpha}
\int_0^t\int_0^t\E_B\{1_{|B_s-B_u|\leq M\eps}\}dsdu.
 \end{equation}
 By Lemma \ref{lem:BsMinusBu}, first let $\eps\to 0$, then $M\to \infty$, the proof is complete.
\end{proof}

Now we can prove the weak convergence of $X_\eps(t)=\eps^{-\frac{\alpha}{2}}\int_0^tV(B_s/\eps)ds$.
\begin{proposition}
For fixed $t>0$, in the annealed sense $X_\eps(t)\Rightarrow V_1\sqrt{c_d}\int_0^t\int_{\R^d}\delta(x-B_s)W(dx)ds$ as $\eps \to 0$.
\label{prop:conLong}
\end{proposition}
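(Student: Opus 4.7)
The plan is as follows. By Lemma \ref{lem:reduction} and Slutsky's theorem, it suffices to prove that, in the annealed sense,
\[
Y_\eps(t) := V_1 \eps^{-\alpha/2} \int_0^t g(B_s/\eps)\, ds \Rightarrow V_1 \sqrt{c_d} \int_0^t \int_{\R^d} \delta(x - B_s)\, W(dx)\, ds =: Y(t).
\]
Conditionally on $B$, the random variable $Y_\eps(t)$ is centered Gaussian with variance
\[
\Sigma_\eps(B) = V_1^2 \eps^{-\alpha} \int_0^t \int_0^t R_g\bigl((B_s - B_u)/\eps\bigr)\, ds\, du,
\]
while Proposition \ref{prop:integralNoise} shows that $Y(t)$, conditionally on $B$, is centered Gaussian with variance
\[
\Sigma(B) = V_1^2 c_d \int_0^t \int_0^t \prod_{i=1}^d |B_i(s) - B_i(u)|^{-\alpha_i}\, ds\, du.
\]
The annealed characteristic functions of $Y_\eps(t)$ and $Y(t)$ are therefore $\E_B\{e^{-\theta^2 \Sigma_\eps(B)/2}\}$ and $\E_B\{e^{-\theta^2 \Sigma(B)/2}\}$ respectively, so by L\'evy's continuity theorem it is enough to establish $\Sigma_\eps(B) \to \Sigma(B)$ for almost every $B$.

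The main step is this pointwise (in $B$) convergence of the conditional variances. For almost every Brownian path and for Lebesgue-a.e.\ $(s,u) \in [0,t]^2$, one has $B_i(s) \neq B_i(u)$ for every $i$; hence $\min_i |(B_i(s)-B_i(u))/\eps| \to \infty$, and the assumption $R_g(x) \sim c_d \prod_i |x_i|^{-\alpha_i}$ yields the pointwise convergence
\[
\eps^{-\alpha} R_g\bigl((B_s - B_u)/\eps\bigr) \to c_d \prod_{i=1}^d |B_i(s) - B_i(u)|^{-\alpha_i}.
\]
The uniform bound $|R_g(x)| \les \prod_i \min(1, |x_i|^{-\alpha_i})$, combined with $\alpha = \sum_i \alpha_i$, provides the $\eps$-independent domination
\[
\eps^{-\alpha} \bigl|R_g\bigl((B_s-B_u)/\eps\bigr)\bigr| \les \prod_{i=1}^d |B_i(s)-B_i(u)|^{-\alpha_i},
\]
whose $(s,u)$-integral is almost surely finite in $B$ (this is the very integrability invoked in the proof of Proposition \ref{prop:integralNoise} via Lemma \ref{lem:fromJiansong} and the condition $\alpha<2$). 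Dominated convergence in $(s,u)$ then yields $\Sigma_\eps(B) \to \Sigma(B)$ almost surely.

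To conclude, note $\Sigma_\eps(B) \geq 0$, so $|e^{-\theta^2 \Sigma_\eps(B)/2}| \leq 1$, and a second application of dominated convergence in $B$ gives convergence of the annealed characteristic functions. Combined with the Gaussianity of both $Y_\eps(t)$ and $Y(t)$ conditionally on $B$ and with the reduction from Lemma \ref{lem:reduction}, this proves the announced weak convergence.

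The delicate point — and the only real obstacle — is producing the $\eps$-uniform envelope for $\eps^{-\alpha}R_g(\cdot/\eps)$ that is integrable along almost every Brownian path: this is exactly where the quantitative decay assumption on $R_g$ in Assumption \ref{ass:longRangeGauss} and the constraint $\alpha<2$ become essential. The reduction furnished by Lemma \ref{lem:reduction} is equally indispensable, since it is the Hermite-rank-$1$ hypothesis $V_0 = 0$, $V_1 \neq 0$ that relegates the higher Hermite components of $\Phi(g)$ to a strictly smaller scale and hence ensures that the Gaussian piece $V_1 g$ alone governs the macroscopic limit.
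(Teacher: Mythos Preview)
Your proof is correct and follows essentially the same route as the paper's: reduce to the Gaussian part via Lemma~\ref{lem:reduction}, then condition on $B$ and pass to the limit in the conditional variance using dominated convergence, first in $(s,u)$ and then in $B$. If anything, you are more explicit than the paper in justifying the $(s,u)$-domination step, writing out the envelope $\eps^{-\alpha}|R_g(\cdot/\eps)|\les\prod_i|B_i(s)-B_i(u)|^{-\alpha_i}$ from the coordinate-wise bound on $R_g$; the paper simply asserts the almost-sure convergence of $\Sigma_\eps(B)$ and invokes dominated convergence without spelling this out.
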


\begin{proof}
By writing $$X_\eps(t)=\frac{1}{\eps^{\alpha/2}}\int_0^t (\Phi(g(\frac{B_s}{\eps}))-V_1g(\frac{B_s}{\eps}))ds+\frac{1}{\eps^{\alpha/2}}\int_0^t V_1g(\frac{B_s}{\eps})ds$$ and applying Lemma \ref{lem:reduction}, we only need to show the weak convergence of $\eps^{-\alpha/2}\int_0^t V_1g(B_s/\eps)ds$.

By conditioning on $B$, we calculate the characteristic function as
\begin{equation}
\E\{\exp(i\theta
\frac{1}{\eps^{\alpha/2}}\int_0^t V_1g(\frac{B_s}{\eps})ds)\}=\exp(-\frac{V_1^2\theta^2}{2\eps^{\alpha}}\int_0^t\int_0^t R_g(\frac{B_s-B_u}{\eps})dsdu).
\end{equation}
Recall that $R_g(x)\sim c_d\prod_{i=1}^d |x_i|^{-\alpha_i}$ as $\min_{i=1,\ldots,d}|x_i|\to \infty$ and $|R_g(x)|\les \prod_{i=1}^d |x_i|^{-\alpha_i}$, we have
 \begin{equation}
 \frac{1}{\eps^\alpha}\int_0^t\int_0^t R_g(\frac{B_s-B_u}{\eps})dsdu\to c_d\int_0^t\int_0^t \frac{1}{\prod_{i=1}^d |B_i(s)-B_i(u)|^{\alpha_i}}dsdu
 \end{equation} almost surely. Now we only need to apply the dominated convergence theorem to derive
\begin{equation}
\begin{aligned}
\E\E_B\{\exp(i\theta
\frac{1}{\eps^{\alpha/2}}\int_0^t V_1g(\frac{B_s}{\eps})ds)\}\to &
\E_B\{\exp(-\frac{1}{2}\theta^2V_1^2c_d\int_0^t\int_0^t \frac{1}{\prod_{i=1}^d |B_i(s)-B_i(u)|^{\alpha_i}}dsdu)\}\\
=& \E\E_B\{\exp(i\theta V_1\sqrt{c_d}\int_0^t\int_{\R^d}\delta(x-B_s)W(dx)ds)\}
\end{aligned}
\end{equation}
as $\eps \to 0$.
\end{proof}

Now we are ready to prove the main theorem.

%\begin{remark}
%The theorem holds for any Gaussian field $V(x)$ such that $R(x)=\E\{V(y)V(y+x)\}\sim \frac{c_d}{\prod_{i=1}^d |x_i|^{\alpha_i}}$ as $\min_{i=1,...,d} |x_i|\to \infty$. The proof does not change.
%\end{remark}

\begin{proof}[Proof of theorem \ref{thm:THspde}]
For fixed $(t,x)$, we let
\begin{eqnarray}
Z_\eps:=u_\eps(t,x)&=&\E_B\{f(x+B_t)\exp(i\frac{1}{\eps^{\alpha/2}}\int_0^t V(\frac{x+B_s}{\eps})ds)\},\\
Z_0:=u_{spde}(t,x)&=&\E_B\{ f(x+B_t)\exp(i V_1\sqrt{c_d}\int_0^t\int_{\R^d} \delta(y-x-B_s)W(dy)ds)\},
\end{eqnarray}
and claim that $\forall m,n\in \mathbb{N}$, $\E\{ Z_\eps^m \overline{Z_\eps^n}\}\to \E\{ Z_0^m \overline{Z_0^n}\}$. 

Actually, we have
\begin{equation}
\E\{Z_\eps^m \overline{Z_\eps^n}\}=\E\E_B\{\prod_{j=1}^mf(x+B_t^j)\prod_{j=m+1}^{m+n}\overline{f(x+B_t^j)}
\exp(\frac{i}{\eps^{\alpha/2}}\int_0^t(\sum_{j=1}^mV(\frac{x+B_s^j}{\eps})-\sum_{j=m+1}^{m+n}V(\frac{x+B_s^j}{\eps}))ds)\},
\end{equation}
%\E\{Z_0^m \overline{Z_0^n}\}&=&\E\{\prod_{j=1}^mf(x+B_t^j)\prod_{j=m+1}^{m+n}\overline{f(x+B_t^j)}\exp(i V_1\sqrt{c_d}(\sum_{j=1}^m\int_0^t\int_{\R^d}\delta(y-x-B_s^j)W(dy)ds-\sum_{j=m+1}^{m+n}\int_0^t\int_{\R^d}\delta(y-x-B_s^j)W(dy)ds))\}
where $B_t^j, j=1,\ldots,N=m+n$ are independent Brownian motions. Since all relevant functions are bounded and continuous, to prove the convergence of $\E\{Z_\eps^m\overline{Z_\eps^n}\}\to \E\{ Z_0^m \overline{Z_0^n}\}$, we only need to prove the annealed weak convergence of
\begin{equation}
\begin{aligned}
W_\eps:=&\sum_{j=1}^N\alpha_j B_t^j+\sum_{j=1}^N\beta_j\frac{1}{\eps^{\alpha/2}}\int_0^tV(\frac{B_s^j}{\eps})ds\\
\Rightarrow &\sum_{j=1}^N\alpha_j B_t^j+V_1\sqrt{c_d}\sum_{j=1}^N\beta_j \int_0^t\int_{\R^d}\delta(y-B_s^j)W(dy)ds
\label{eq:conDisSPDE}
\end{aligned}
\end{equation}
for $\alpha_j,\beta_j\in \R$, where we have used the stationarity of $V(x)$ and Remark \ref{remark:diffStart}. 

Now we write $W_\eps=I_1+I_2+I_3$ with
\begin{eqnarray}
I_1&=&\sum_{j=1}^N\alpha_j B_t^j,\\
I_2&=&\sum_{j=1}^N\beta_j \frac{1}{\eps^{\alpha/2}}\int_0^t V_1g(\frac{B_s^j}{\eps})ds,\\
I_3&=&\sum_{j=1}^N\beta_j \frac{1}{\eps^{\alpha/2}}\int_0^t (\Phi(g(\frac{B_s^j}{\eps}))-V_1g(\frac{B_s^j}{\eps}))ds,
\end{eqnarray}
$I_3\to 0$ in probability by Lemma \ref{lem:reduction}, and for $I_1+I_2$, we calculate
\begin{equation}
\begin{aligned}
&\E\E_B\{\exp(i\theta_1I_2+i\theta_2 I_2)\}\\
=&\E_B\{\exp(i\theta_1\sum_{j=1}^N\alpha_j B_t^j)\exp(-\frac{1}{2}V_1^2\theta_2^2\sum_{i,j=1}^N\beta_i\beta_j\frac{1}{\eps^\alpha}\int_0^t\int_0^tR_g(\frac{B_s^i-B_u^j}{\eps})dsdu)\},
\end{aligned}
\end{equation}
and by the same proof as in Proposition \ref{prop:conLong}, we have
\begin{equation}
\frac{1}{\eps^\alpha}\int_0^t\int_0^tR_g(\frac{B_s^i-B_u^j}{\eps})dsdu\to
\int_0^t\int_0^t \frac{c_d}{\prod_{k=1}^d|B_k^i(s)-B_k^j(u)|^{\alpha_k}}dsdu
\end{equation}
almost surely. Therefore, we see that \begin{equation}
I_1+I_2\Rightarrow
\sum_{j=1}^N\alpha_j B_t^j+V_1\sqrt{c_d}\sum_{j=1}^N\beta_j \int_0^t\int_{\R^d}\delta(y-B_s^j)W(dy)ds
\end{equation}
in distribution in light of Remark \ref{remark:covariance}, so \eqref{eq:conDisSPDE} is proved.

Note that $|Z_\eps|, |Z_0|$ are uniformly bounded, if we let $Z_\eps=Z_{\eps,1}+iZ_{\eps,2}, Z_0=Z_{0,1}+iZ_{0,2}$, the corresponding real and imaginary parts are uniformly bounded as well. From the fact that $\E\{ Z_\eps^m \overline{Z_\eps^n}\}\to \E\{ Z_0^m \overline{Z_0^n}\}$, we know $\forall m,n\in \mathbb{N}$, $\E\{Z_{\eps,1}^mZ_{\eps,2}^n\}\to \E\{Z_{0,1}^mZ_{0,2}^n\}$. So
\begin{equation}
\begin{aligned}
\E\{\exp(i\theta_1Z_{\eps,1}+i\theta_2Z_{\eps,2})\}=&\sum_{k=0}^\infty \frac{1}{k!}\E\{(i\theta_1Z_{\eps,1}+i\theta_2Z_{\eps,2})^k\}\\
\to &\sum_{k=0}^\infty \frac{1}{k!}\E\{(i\theta_1Z_{0,1}+i\theta_2Z_{0,2})^k\}=\E\{\exp(i\theta_1Z_{0,1}+i\theta_2Z_{0,2})\},
\end{aligned}
\end{equation}
which completes the proof.
\end{proof}

%\section{Summary and discussions}
%\label{sec:conclusion}

\section*{Acknowledgment}  
We would like to thank the anonymous referees for their careful reading of the manuscript and many helpful suggestions and comments. This paper was partially funded by AFOSR Grant NSSEFF- FA9550-10-1-0194 and NSF grant DMS-1108608.

%%%%%%%%%%%%%%%%%%%
%%%%%%%%%%%%%%%%%%%
%%%%%%%%%%%%%%%%%%%

\appendix
\section{Technical lemmas}

\begin{proposition}
Consider the equation $\partial_t u=\frac{1}{2}\Delta u+iV(x)u$
 with initial condition $u(0,x)=f(x)\in \C_b(\R^d)$. Let us define $u(t,x)=\E_B\{ f(x+B_t)\exp(i\int_0^t V(x+B_s)ds)\}$. If $V$ has locally bounded sample path almost surely, we have for any $\varphi \in \C_c^\infty(\R^d)$,
\begin{equation}
\int_{\R^d} \!\!\!   u(t,x)\varphi(x)dx=\int_{\R^d} \!\!\!   f(x)\varphi(x)dx+\int_0^t\int_{\R^d} \!\!\!   u(s,x)\frac{1}{2}\Delta \varphi(x)dxds+i\int_0^t \int_{\R^d} \!\!\! u(s,x)V(x)\varphi(x)dxds,
\label{eq:weaksolution}
\end{equation}
i.e., the Feynman-Kac solution $u(t,x)$ is a weak solution almost surely.
\label{prop:wkSolu}
\end{proposition}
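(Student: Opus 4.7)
The natural strategy is a mollification argument on the potential, working $\omega$ by $\omega$ on the almost-sure event where $V(\cdot,\omega)$ is locally bounded. Fix a standard non-negative mollifier $\rho_\delta$ supported in the $\delta$-ball and set $V_\delta=V*\rho_\delta$, which is $C^\infty$ and locally uniformly bounded (by $\|V\|_{L^\infty(B(0,R+1))}$ on $B(0,R)$ for $\delta\leq 1$). Lebesgue differentiation gives $V_\delta(y)\to V(y)$ for Lebesgue-a.e.\ $y\in\R^d$.

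For the regularized potential, introduce the Feynman-Kac approximation
\[
u_\delta(t,x)=\E_B\{f(x+B_t)\exp(i\textstyle\int_0^t V_\delta(x+B_s)ds)\}.
\]
Since $V_\delta$ is smooth and locally bounded and $f\in\C_b$, standard classical Feynman-Kac theory for smooth bounded potentials (e.g.\ applying It\^o's formula, or smoothing $f$ to $f_\eta$ first, obtaining a classical solution, and then letting $\eta\to 0$) ensures that $u_\delta\in \C_b([0,T]\times\R^d)\cap \C^{1,2}((0,T]\times\R^d)$ and solves $\partial_t u_\delta=\frac12\Delta u_\delta+iV_\delta u_\delta$ in the classical sense. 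Multiplying by $\varphi\in\C_c^\infty(\R^d)$ and integrating by parts in $x$ and then in time yields \eqref{eq:weaksolution} with $(u,V)$ replaced by $(u_\delta,V_\delta)$.

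The core work is passing $\delta\to 0$. Three uniform facts suffice: (i) $|u_\delta|,|u|\leq\|f\|_\infty$; (ii) $V_\delta$ is uniformly bounded on $\mathrm{supp}(\varphi)$ by a constant independent of $\delta$, and $V_\delta\to V$ Lebesgue-a.e.; (iii) pointwise convergence $u_\delta(s,x)\to u(s,x)$ for each $(s,x)$. The delicate point is (iii): since $x+B_s$ has a smooth Gaussian density, the expected Lebesgue occupation time of the path $s\mapsto x+B_s$ in any Lebesgue-null subset of $\R^d$ vanishes, so almost surely $V_\delta(x+B_s)\to V(x+B_s)$ for Lebesgue-a.e.\ $s\in[0,t]$. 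Combined with the pathwise bound $|V_\delta(x+B_s)|\leq \|V\|_{L^\infty(K)}$ on the random compact set $K=\{x+B_s:s\in[0,t]\}$, dominated convergence gives $\int_0^t V_\delta(x+B_s)ds\to\int_0^t V(x+B_s)ds$ almost surely, and a second DCT (with integrand bounded by $\|f\|_\infty$) yields $u_\delta(s,x)\to u(s,x)$.

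Given (i)--(iii), each of the four terms in the weak formulation for $u_\delta$ converges to its counterpart for $u$: the spatial terms $\int u_\delta\varphi\,dx$ and $\int_0^t\!\int u_\delta\,\tfrac12\Delta\varphi\,dxds$ by bounded convergence on $\mathrm{supp}(\varphi)$, and the potential term $\int_0^t\!\int u_\delta V_\delta\varphi\,dxds$ by decomposing $u_\delta V_\delta=(u_\delta-u)V_\delta+u(V_\delta-V)+uV$ and controlling each piece via (i)--(iii). This yields \eqref{eq:weaksolution} for $u$ on the almost-sure event, completing the proof. The principal obstacle is the nested dominated-convergence argument in (iii): it relies on the smoothness of the Brownian transition density to force the path to avoid Lebesgue-null sets in the occupation-time sense, which is what allows the only-locally-bounded (and generally discontinuous) potential $V$ to be handled without any further regularity.
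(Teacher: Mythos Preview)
Your approach is the same as the paper's: regularize $V$, invoke the classical Feynman--Kac identity for the regularized problem, and pass to the limit by dominated convergence. There is one technical difference that matters. The paper uses a \emph{two-parameter} approximation
\[
V_{\delta,M}(x)=\int_{\R^d}\phi_\delta(x-y)\,V(y)\,1_{|y|<M}\,dy,
\]
truncating $V$ to a ball of radius $M$ before mollifying. This makes $V_{\delta,M}$ globally bounded and smooth, so the statement ``the Feynman--Kac expression is a classical solution'' is genuinely standard; one then lets $\delta\to0$ and $M\to\infty$.

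Your $V_\delta=V*\rho_\delta$ is smooth but only \emph{locally} bounded, and you then invoke ``standard classical Feynman--Kac theory for smooth bounded potentials'' to conclude $u_\delta\in\C^{1,2}$. That theory requires a globally bounded potential; with $V_\delta$ possibly growing at infinity, differentiating under the expectation (the derivatives bring down $\int_0^t\nabla V_\delta(x+B_s)\,ds$, whose moments are not controlled without a growth assumption on $V$) is not justified as written. This is a real gap, though easily repaired: either insert the $M$-truncation exactly as the paper does, or run your own limit argument once more with a smooth compactly supported cutoff of $V_\delta$ to first establish that $u_\delta$ satisfies the weak formulation. Your dominated-convergence argument for $u_\delta\to u$ --- via the fact that Brownian occupation measure does not charge Lebesgue-null sets, together with the pathwise bound on the random compact set $\{x+B_s:s\in[0,t]\}$ --- is correct and spelled out more carefully than the paper's one-line appeal to DCT.
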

%whether the continuous path could be generalized

\begin{proof}
Fixing any $\delta,M>0$, define
\begin{equation}
V_{\delta,M}(x)=\int_{\R^d} \phi_\delta(x-y)V(y)1_{|y|<M}dy,
\end{equation}
where $\phi_\delta$ is a family of compactly supported mollifier. Fixing the realization, since $V(y)1_{|y|<M}$ is bounded, $V_{\delta,M}$ is bounded, and we have $V_{\delta,M}(x)\to V(x)1_{|x|<M}$ almost everywhere as $\delta \to 0$. In addition, $V_{\delta,M}$ is smooth, so for the equation $\partial_t u_{\delta, M}=\frac{1}{2}\Delta u_{\delta,M} +iV_{\delta,M}u_{\delta,M}$ with initial condition $u_{\delta,M}(0,x)=f(x)$, we have its classical solution given by the Feynman-Kac formula
\begin{equation}
u_{\delta,M}(t,x)=\E_B\{f(x+B_t)\exp(i\int_0^t V_{\delta,M}(x+B_s)ds)\},
\end{equation}
and if we first let $\delta\to 0$, then $M\to\infty$,  $u_{\delta,M}(t,x)\to u(t,x)$ by the dominated convergence theorem. Since $u_{\delta,M}$ is also a weak solution, we have
\begin{equation}\begin{array}{rcl}
\displaystyle \int_{\R^d} u_{\delta,M}(t,x)\varphi(x)dx&=&\displaystyle \int_{\R^d} f(x)\varphi(x)dx+ \displaystyle \int_0^t\int_{\R^d} u_{\delta,M}(s,x)\frac{1}{2}\Delta \varphi(x)dxds\\&&\displaystyle+i\int_0^t \int_{\R^d} u_{\delta,M}(s,x)V_{\delta,M}(x)\varphi(x)dxds.\end{array}
\end{equation}
Let $\delta \to 0, M\to \infty$, we complete the proof.
\end{proof}

\begin{proposition}
If $M_t$ is a continuous martingale and $W_t$ is a standard Brownian motion, then
\begin{equation}
d_{1,k}(M_1,W_1)\leq (1\vee k)\E\{ |\langle M\rangle_1-1|\},
\end{equation}
with the distance $d_{1,k}$ defined as
\begin{equation}
d_{1,k}(X,Y)=\sup \{|\E\{f(X)-f(Y)\}|: f\in C_b^2(\R), \|f'\|_\infty\leq 1, \|f''\|_\infty\leq k\}.
\end{equation}
\end{proposition}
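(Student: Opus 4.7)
The plan is to reduce the comparison $(M_1, W_1)$ to a comparison $(\tilde W_\tau, \tilde W_1)$ for a single standard Brownian motion $\tilde W$ and the random time $\tau := \langle M\rangle_1$, via the Dambis--Dubins--Schwarz (DDS) time change. On a possibly enlarged probability space, DDS yields a standard Brownian motion $\tilde W$ satisfying $M_t = \tilde W_{\langle M\rangle_t}$ for all $t \geq 0$. Setting $\tau_u = \inf\{t : \langle M\rangle_t > u\}$ and $\tilde\F_u := \F_{\tau_u}$, one checks directly that $\tau = \langle M\rangle_1$ is a stopping time of the filtration $(\tilde\F_u)_u$ with respect to which $\tilde W$ is a Brownian motion (since $\{\tau \leq u\} = \{\tau_u \geq 1\} \in \F_{\tau_u}$). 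Coupling $W$ with $\tilde W$ (take $W_1 = \tilde W_1$), the problem reduces to controlling $|\E\{f(\tilde W_\tau) - f(\tilde W_1)\}|$ uniformly for admissible $f$.

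Next I would combine Itô's formula with optional stopping. For $f \in C_b^2(\R)$, the process
\begin{equation*}
N_t := f(\tilde W_t) - \frac{1}{2}\int_0^t f''(\tilde W_s)\,ds = f(0) + \int_0^t f'(\tilde W_s)\,d\tilde W_s
\end{equation*}
is a martingale. One may assume $\E|\langle M\rangle_1 - 1| < \infty$ (otherwise the inequality is vacuous), so $\E\tau < \infty$. The stochastic integral stopped at $\tau$ has quadratic variation bounded by $\|f'\|_\infty^2 \tau \leq \tau$, so it is a uniformly integrable martingale. Optional stopping at $\tau$ and at the deterministic time $1$ gives $\E N_\tau = \E N_0 = \E N_1$, hence
\begin{equation*}
\E\{f(\tilde W_\tau)\} - \E\{f(\tilde W_1)\} = \frac{1}{2}\E\left\{\int_1^\tau f''(\tilde W_s)\,ds\right\},
\end{equation*}
where $\int_1^\tau$ denotes $-\int_\tau^1$ when $\tau < 1$. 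Since $\|f''\|_\infty \leq k$, the right-hand side is bounded in absolute value by $\tfrac{k}{2}\E|\tau - 1| = \tfrac{k}{2}\E|\langle M\rangle_1 - 1| \leq (1\vee k)\E|\langle M\rangle_1 - 1|$, and taking the supremum over admissible $f$ yields the claim.

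The main technical hurdle is the DDS setup and the identification of $\tau$ as a stopping time of $\tilde W$'s filtration; once this is in place, the Itô/optional-stopping step is routine. A secondary subtlety is justifying optional stopping with the unbounded random time $\tau$, which is handled by the uniform-integrability argument above — here the condition $\|f'\|_\infty \leq 1$ is exactly what is needed to control the quadratic variation of the martingale integral. Note that the argument actually yields the sharper constant $k/2$, from which the stated bound $(1\vee k)$ follows immediately.
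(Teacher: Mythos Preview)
Your proof is correct and takes a genuinely different route from the paper's. The paper does not invoke DDS; instead it works in the original time scale. It defines the stopping time $\tau=\sup\{t\in[0,1]:\langle M\rangle_t\le 1\}$, freezes $M$ on $(\tau,1]$, and then appends an independent Brownian motion $b$ on $(1,2]$, stopping it once total quadratic variation reaches $1$. This produces $\tilde M_2\sim N(0,1)$, and the triangle inequality reduces to the two terms $d_{1,k}(M_1,M_\tau)$ and $d_{1,k}(M_\tau,\tilde M_2)$, each handled by a second-order Taylor expansion with the first-order term killed by the martingale property; each contributes $\tfrac{k}{2}\E|\langle M\rangle_1-1|$, giving the constant $k$.

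Your DDS argument is cleaner in one respect: it collapses the two Taylor steps into a single optional-stopping/It\^o identity and delivers the sharper constant $k/2$ directly. The trade-off is that you must set up the time-changed filtration, verify that $\tau=\langle M\rangle_1$ is a stopping time there, and justify optional stopping with an unbounded $\tau$ (your uniform-integrability argument via $\|f'\|_\infty\le 1$ and $\E\tau<\infty$ is the right way to do this). The paper's construction avoids DDS altogether and needs only Taylor's theorem plus the elementary facts $\E\{(M_1-M_\tau)^2\}=\E\{\langle M\rangle_1-\langle M\rangle_\tau\}$ and $\E\{b_{1-\langle M\rangle_\tau}^2\}=\E\{1-\langle M\rangle_\tau\}$; it is thus more self-contained, at the cost of a factor of two in the constant.
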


\begin{proof}
Since $M_t$ is continuous, the quadratic variation process $\langle M\rangle_t$ is continuous as well. We define
\begin{equation}
\tau=\sup\{t\in[0,1]: \langle M\rangle_t\leq 1\},
\end{equation}
and it is clear that $\tau$ is a stopping time. We construct $\tilde{M}_t$ on $[0,2]$ as
\begin{equation}
\tilde{M}_t=
 \left\{
\begin{array}{ll}
M_t & t\in [0,\tau],\\
M_\tau& t\in (\tau,1],\\
M_\tau+b_{t-1} & t\in (1,2-\langle M\rangle_\tau],\\
M_\tau+b_{1-\langle M\rangle_\tau} & t\in (2-\langle M\rangle_\tau,2],
\end{array} \right.
\end{equation}
where $b$ is an independent Brownian motion.

Clearly $\tilde{M}_t$ is a continuous martingale and $\langle\tilde{M}\rangle_2=1$, so $\tilde{M}_2\sim N(0,1)$. Therefore, $d_{1,k}(M_1,W_1)=d_{1,k}(M_1,\tilde{M}_2)$ and we have
\begin{equation}
d_{1,k}(M_1,\tilde{M}_2)\leq d_{1,k}(M_1,M_\tau)+d_{1,k}(M_\tau,\tilde{M}_2).
\end{equation}
For the first term, if $\|f''\|_\infty\leq k$,
\begin{equation}
|\E \{f(M_1)\}-\E \{f(M_\tau)\}-\E\{(M_1-M_\tau)f'(M_\tau)\}|\leq \frac{k}{2}\E\{(M_1-M_\tau)^2\}.
\end{equation}
Note $LHS=|\E \{f(M_1)\}-\E \{f(M_\tau)\}|$ because $\E\{ \E\{(M_1-M_\tau)f'(M_\tau)|\mathcal{F}_\tau\}\}=0$, and $\E\{(M_1-M_\tau)^2\}=\E\{ \langle M\rangle_1-\langle M\rangle_\tau\}\leq \E\{|\langle M\rangle_1-1|\}$.

For the second term, we have $\tilde{M}_2=M_\tau+b_{1-\langle M\rangle_\tau}$. So similarly
\begin{equation}
|\E \{f(\tilde{M}_2)\}-\E \{f(M_\tau)\}-\E\{ b_{1-\langle M\rangle_\tau}f'(M_\tau)\}|\leq \frac{k}{2}\E \{b_{1-\langle M\rangle_\tau}^2\}.
\end{equation}
$LHS=|\E \{f(\tilde{M}_2)\}-\E \{f(M_\tau)\}|$ since $b$ is independent from $M$, and $$RHS= \frac{k}{2}\E\{1-\langle M\rangle_\tau\}\leq \frac{k}{2}\E\{ |1-\langle M\rangle_1|\}.$$

To summarize, we have $d_{1,k}(M_1,W_1)\leq k \E\{|1-\langle M\rangle_1|\}$.
\end{proof}

\begin{lemma}
\begin{equation}
\int_{\R^d}\frac{e^{-\rho|x-y|}}{|x-y|^{d-1}}\frac{e^{-\rho|y|}}{|y|^{d-1}}dy\les e^{-\rho|x|}(1+\frac{1}{|x|^{d-2}}).
\end{equation}
\label{lem:fromWenjia}
\end{lemma}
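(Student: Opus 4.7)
The plan is to decompose $\R^d$ into three regions based on where the integrand concentrates,
\[
A = \{|y|\leq |x|/2\}, \quad B = \{|x-y|\leq |x|/2\}, \quad C = \{|y|>|x|/2,\ |x-y|>|x|/2\},
\]
and to exploit the universal triangle inequality $|y|+|x-y|\geq |x|$, which gives $e^{-\rho|y|}e^{-\rho|x-y|}\leq e^{-\rho|x|}$ pointwise and so produces the desired exponential factor in every region.

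On $A$ the sharpened triangle inequality $|x-y|\geq |x|-|y|$ already implies $e^{-\rho|y|}e^{-\rho|x-y|}\leq e^{-\rho|x|}$, and the bound $|x-y|\geq |x|/2$ handles the $|x-y|^{-(d-1)}$ factor. What remains is $\int_{|y|\leq |x|/2}|y|^{1-d}\,dy = |S^{d-1}|\,|x|/2$, leading to an $A$-contribution of order $e^{-\rho|x|}|x|^{2-d}$. Under the change of variable $z=x-y$ the integrand is symmetric and $B$ maps to $A$, so the $B$-contribution obeys the same bound.

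The main work is in region $C$. Here the integrand is not concentrated near either singularity, so one has to use both the exponential factor and the algebraic decay at infinity. I would again extract $e^{-\rho|x|}$ from the exponentials via the global triangle inequality, then split $C$ by whether $|y|\leq |x-y|$ or not. By symmetry in $y\leftrightarrow x-y$ it suffices to treat the case $|y|\leq |x-y|$, on which one may replace $|x-y|^{-(d-1)}$ by the larger $|y|^{-(d-1)}$, reducing the task to
\[
\int_{|y|>|x|/2}\frac{dy}{|y|^{2(d-1)}} \;=\; \frac{|S^{d-1}|}{d-2}\Big(\frac{|x|}{2}\Big)^{2-d}\;\les\; |x|^{2-d},
\]
which is finite precisely because $d\geq 3$. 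Summing the three contributions yields $\les e^{-\rho|x|}|x|^{2-d}\leq e^{-\rho|x|}(1+|x|^{2-d})$, proving the claim.

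The delicate step is region $C$: the exponential factor must do double duty, simultaneously producing $e^{-\rho|x|}$ via the triangle inequality and leaving enough algebraic decay that the tail integral of $|y|^{-2(d-1)}$ converges at infinity. This latter requirement is exactly what forces the threshold $d\geq 3$, and is the only place where the dimension enters essentially.
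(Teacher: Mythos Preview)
Your argument is correct. The paper itself does not supply a proof of this lemma; it simply cites \cite[Lemma A.1]{bal2011corrector}. Your three-region decomposition with the global triangle inequality $|y|+|x-y|\geq |x|$ to extract the factor $e^{-\rho|x|}$ is a clean, self-contained proof, and in fact produces the slightly sharper bound $\les e^{-\rho|x|}|x|^{2-d}$ (the ``$1$'' in the lemma's statement is not needed). The restriction $d\geq 3$, which you correctly isolate as coming from the convergence of $\int_{|y|>|x|/2}|y|^{-2(d-1)}\,dy$, matches the standing assumption of the paper.

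One small remark: in your region-$A$ discussion you write that the ``sharpened'' triangle inequality $|x-y|\geq |x|-|y|$ implies $e^{-\rho|y|}e^{-\rho|x-y|}\leq e^{-\rho|x|}$; this exponential bound actually follows from the ordinary triangle inequality $|y|+|x-y|\geq |x|$ and holds on all of $\R^d$, so the phrasing is slightly misleading (the reverse triangle inequality is what you use to get $|x-y|\geq |x|/2$ on $A$). This does not affect the validity of the proof.
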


\begin{proof}
See \cite{bal2011corrector} Lemma A.1.
\end{proof}

%%%%%%%%

The result in Lemma \ref{lem:2ndMoment} is of convolution type. We prove it by the domain decomposition method. Here are some notations appearing in the proof. If we denote $B(z,r)=\{y:|y-z|\leq r\}$,  then $\forall x\in \R^d$, let $\rho=|x|>0$, $A_1=\{z:|z|<|z-x|\}$, $A_2=\{z:|z|\geq |z-x|\}$, and define $(I)=B(0,\rho)\cap A_1$, $(II)=B(x,\rho)\cap A_2$, $(III)=\R^d\setminus((I)\cup(II))$.

$(I),(II),(III)$ appears in the proof of Lemma \ref{lem:2ndMoment}, and we will estimate the integral in each of them respectively. $\Psi$ is assumed to be some positive function such that $\Psi(x)\les 1\wedge |x|^{-\alpha}$ for any $\alpha>0$.

\begin{lemma}
\begin{equation}
\frac{1}{\lambda}\int_{\R^{2d}}\frac{e^{-\rho|y|}}{|y|^{d-1}}\frac{e^{-\rho|z|}}{|z|^{d-1}}\Psi(x-\frac{y-z}{\sqrt{\lambda}})dydz
\les \lambda^{\frac{d}{2}-1}e^{-c\sqrt{\lambda}|x|}+1\wedge\frac{e^{-c\sqrt{\lambda}|x|}}{|x|^{d-2}}+1\wedge\frac{1}{|x|^\beta}
\end{equation}
for some $c>0$ and sufficiently large $\beta>0$.
\label{lem:2ndMoment}
\end{lemma}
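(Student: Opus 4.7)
The plan is to reduce the $\R^{2d}$ integral to an $\R^d$ integral using Lemma~\ref{lem:fromWenjia}, and then bound the resulting single-variable integral via the three-region decomposition $(I),(II),(III)$ introduced just above. First I would set $u:=y-z$ and integrate out $y$ using Lemma~\ref{lem:fromWenjia}:
$$\int_{\R^d}\frac{e^{-\rho|y|}}{|y|^{d-1}}\frac{e^{-\rho|y-u|}}{|y-u|^{d-1}}\,dy \;\lesssim\; e^{-\rho|u|}\!\left(1+\frac{1}{|u|^{d-2}}\right).$$
After the rescaling $u=\sqrt{\lambda}\,v$, the target integral is controlled by $\lambda^{d/2-1}I_A+I_B$, where
$$I_A:=\int e^{-\rho\sqrt{\lambda}|v|}\Psi(x-v)\,dv,\qquad I_B:=\int\frac{e^{-\rho\sqrt{\lambda}|v|}}{|v|^{d-2}}\Psi(x-v)\,dv,$$
reducing the claim to estimating these two $\R^d$-integrals.

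Next I would split $\R^d=(I)\cup(II)\cup(III)$ and estimate $I_A,I_B$ region by region using the following geometric facts: on $(I)$ one has $|v|\le|x|$ and $|x-v|\ge|x|/2$, so $\Psi(x-v)\lesssim 1\wedge|x|^{-\alpha}$ uniformly, with $\alpha>0$ as large as desired (since $\Psi$ decays faster than any polynomial); on $(II)$ one has $|x-v|\le|x|$ and $|v|\ge|x|/2$, so $e^{-\rho\sqrt{\lambda}|v|}\le e^{-c\sqrt{\lambda}|x|}$ and $|v|^{-(d-2)}\lesssim|x|^{-(d-2)}$; on $(III)$ both $|v|$ and $|x-v|$ exceed $|x|$, giving the same exponential and polynomial bounds. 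Using the integrability of $\Psi$, the contributions from $(II)\cup(III)$ produce the terms $\lambda^{d/2-1}e^{-c\sqrt{\lambda}|x|}$ and $e^{-c\sqrt{\lambda}|x|}/|x|^{d-2}$ on the right-hand side; the ``$1\wedge$'' truncation in the latter is supplied by the uniform estimate $I_B\lesssim\int\Psi(w)|x-w|^{-(d-2)}\,dw\lesssim 1$, valid because $\Psi\in L^1\cap L^\infty$ and $|\cdot|^{-(d-2)}$ is locally integrable. On $(I)$, pulling the $\Psi$-factor out and directly computing the remaining integrals in spherical coordinates yields
$$\lambda^{d/2-1}I_A^{(I)}\lesssim(1\wedge|x|^{-\alpha})\min\bigl(\lambda^{d/2-1}|x|^d,\lambda^{-1}\bigr),\qquad I_B^{(I)}\lesssim(1\wedge|x|^{-\alpha})\min\bigl(|x|^2,\lambda^{-1}\bigr).$$

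The main obstacle is absorbing these $(I)$-contributions into the third term $1\wedge|x|^{-\beta}$ of the claimed bound. In the regime $|x|\gtrsim 1/\sqrt{\lambda}$ both minima above saturate at $\lambda^{-1}$, producing a stray factor of $\lambda^{-1}$ that must be cancelled. The resolution is to choose $\alpha$ sufficiently large relative to $\beta$ and $d$ (concretely, $\alpha\ge\max(2(d-1),\beta+2)$ suffices), and then run a case analysis over the three regimes $|x|\le 1$, $1\le|x|\le 1/\sqrt{\lambda}$, and $|x|\ge 1/\sqrt{\lambda}$ to verify the pointwise inequality $(1\wedge|x|^{-\alpha})\min(|x|^2,\lambda^{-1})\lesssim 1\wedge|x|^{-\beta}$ and the analogous bound for $\lambda^{d/2-1}I_A^{(I)}$, completing the proof.
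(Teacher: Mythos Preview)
Your proposal is correct and follows essentially the same route as the paper: apply Lemma~\ref{lem:fromWenjia} to collapse the $\R^{2d}$ integral to the two single-variable integrals $I_A,I_B$, establish the trivial uniform bounds, and then run the $(I),(II),(III)$ decomposition for $|x|\gg1$. The only difference is that on region $(I)$ the paper simply discards the exponential factor and uses the crude volume bounds $\int_{|v|<|x|}dv\sim|x|^d$ and $\int_{|v|<|x|}|v|^{2-d}dv\sim|x|^2$, obtaining $|x|^{d-\alpha}$ and $|x|^{2-\alpha}$ directly with no $\lambda$-dependence---so your three-regime case analysis in $|x|$ versus $1/\sqrt{\lambda}$ is unnecessary.
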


\begin{proof}
By Lemma \ref{lem:fromWenjia}, we have
\begin{equation}
\frac{1}{\lambda}\int_{\R^2d}\frac{e^{-\rho|y|}}{|y|^{d-1}}\frac{e^{-\rho|z|}}{|z|^{d-1}}\Psi(x-\frac{y-z}{\sqrt{\lambda}})dydz
\les (i)+(ii),
\end{equation}
where
\begin{eqnarray}
(i)&=&\lambda^{\frac{d}{2}-1}\int_{\R^d}e^{-\rho\sqrt{\lambda}|y|}(1\wedge \frac{1}{|x-y|^\alpha})dy,\\
(ii)&=&\int_{\R^d}e^{-\rho\sqrt{\lambda}|y|}\frac{1}{|y|^{d-2}}(1\wedge \frac{1}{|x-y|^\alpha})dy.
\end{eqnarray}
We have used $\Psi(x)\les 1\wedge \frac{1}{|x|^\alpha}$ for $\alpha$ sufficiently large. $(i),(ii)$ will be estimated separately but in the same way.

First of all, we clearly have that $(i)\les \lambda^{\frac{d}{2}-1}$ and $(ii)\les 1$. Now we assume $|x|\gg1$ and divide $\R^d$ into three parts, $(I),(II),(III)$.

For $(i)$, we have that when $|y-x|\leq 1$, $\int_{|y-x|\leq 1}e^{-\rho\sqrt{\lambda}|y|}dy\les e^{-\rho\sqrt{\lambda}|x|}$. In region $(I)$, we have $|y-x|\geq \frac{|x|}{2}$, so $$\int_{I}e^{-\rho\sqrt{\lambda}|y|}\frac{1}{|x-y|^\alpha}dy\les \frac{1}{|x|^{\alpha-d}}.$$ In region $(II)$, $|y|\geq \frac{|x|}{2}$, so $$\int_{II}1_{|x-y|>1}e^{-\rho\sqrt{\lambda}|y|}\frac{1}{|x-y|^\alpha}dy\les e^{-\rho\sqrt{\lambda}|x|/2}.$$ In region $(III)$, $|x-y|\geq |y|/2$, so $$\int_{III}e^{-\rho\sqrt{\lambda}|y|}\frac{1}{|x-y|^\alpha}dy\les \int_{\R^d}1_{|y|>|x|}\frac{1}{|y|^\alpha}dye^{-\rho\sqrt{\lambda}|x|}\les e^{-\rho\sqrt{\lambda}|x|}.$$ Therefore, in summary, we have
\begin{equation}
\int_{\R^d}e^{-\rho\sqrt{\lambda}|y|}(1\wedge \frac{1}{|x-y|^\alpha})dy\les 1\wedge(e^{-c\sqrt{\lambda}|x|}+\frac{1}{|x|^\beta})
\end{equation}
for $c=\rho/2>0$ and $\beta$ sufficiently large.

For $(ii)$, when $|y-x|\leq 1$, $$\int_{|y-x|\leq 1}e^{-\rho\sqrt{\lambda}|y|}\frac{1}{|y|^{d-2}}\les e^{-\rho\sqrt{\lambda}|x|}\frac{1}{|x|^{d-2}}.$$ In region $(I)$, by a similar discussion, we have $$\int_{(I)}e^{-\rho\sqrt{\lambda}|y|}\frac{1}{|y|^{d-2}}dy\frac{1}{|x|^\alpha}\les \frac{1}{|x|^{\alpha-2}}.$$
In region $(II)$, $e^{-\rho\sqrt{\lambda}|y|}\frac{1}{|y|^{d-2}}\les e^{-\rho\sqrt{\lambda}|x|/2}\frac{1}{|x|^{d-2}}$, so $$\int_{(II)}e^{-\rho\sqrt{\lambda}|y|}\frac{1}{|y|^{d-2}}\frac{1}{|x-y|^\alpha}1_{|x-y|>1}dy\les e^{-\rho\sqrt{\lambda}|x|/2}\frac{1}{|x|^{d-2}}.$$ In region $(III)$,
we have $$\int_{(III)}e^{-\rho\sqrt{\lambda}|y|}\frac{1}{|y|^{d-2}}\frac{1}{|x-y|^\alpha}dy\les e^{-\rho\sqrt{\lambda}|x|}\frac{1}{|x|^{d-2}}.$$

The proof is complete.
\end{proof}

\begin{lemma}
For $$F_{\lambda,\rho}(x)=\frac{1}{\lambda}\int_{\R^{2d}}\frac{e^{-\rho|y|}}{|y|^{d-1}}\frac{e^{-\rho|z|}}{|z|^{d-1}}|R|(x-\frac{y-z}{\sqrt{\lambda}})dydz,$$ and $|R(x)|\les 1\wedge |x|^{-\beta}$ with $\beta\in (2,d)$, we have the following estimates for some $c>0$:
\begin{equation}
\begin{aligned}
F_{\lambda,\rho}(x) \les &\lambda^{\frac{\beta}{2}-1}e^{-c\sqrt{\lambda}|x|}+\frac{1}{\lambda|x|^\beta}\int_0^{\sqrt{\lambda}|x|}e^{-cr}r^{d-1}dr1_{|x|\geq \frac12}+\lambda^{\frac{d}{2}-1}e^{-c\sqrt{\lambda}|x|}|x|^{d-\beta}1_{|x|\geq 1}\\
+&1\wedge\left(\frac{1}{|x|^{\beta-2}}e^{-c\sqrt{\lambda}|x|}+\frac{1}{\lambda|x|^\beta}\int_0^{\sqrt{\lambda}|x|}e^{-cr}rdr+\lambda^{\frac{\beta}{2}-1}\int_{\sqrt{\lambda}|x|}^\infty e^{-cr}r^{1-\beta}dr\right),
\end{aligned}
\end{equation}
and we have
\begin{equation}
\int_{\R^d}\frac{F_{\lambda,\rho}(x)+F_{\lambda,\rho}^2(x)}{|x|^{d-2}}dx\les\left\{
\begin{array}{ll}
\lambda^{\frac{\beta}{2}-2} & \beta<4,\\
\log|\lambda| & \beta=4,\\
1 & \beta>4.
\end{array} \right.
\end{equation}

%If $\beta=d$, then:
%\begin{equation}
%\begin{aligned}
%F_\lambda(x)\les & \lambda^{\frac{d}{2}-1}e^{-c\sqrt{\lambda}|x|}+\frac{1}{\lambda|x|^\beta}\int_0^{\sqrt{\lambda}|x|}e^{-cr}r^{d-1}dr1_{|x|\geq\frac12}\\
%+&\lambda^{\frac{d}{2}-1}e^{-c\sqrt{\lambda}|x|}\log|x|1_{|x|\geq 1}+\lambda^{\frac{\beta}{2}-1}\int_{\sqrt{\lambda}|x|}^\infty e^{-c r}r^{-1}dr\\
%+& 1\wedge\left(\frac{e^{-c\sqrt{\lambda}|x|}}{|x|^{d-2}}+\frac{1}{\lambda|x|^\beta}\int_0^{\sqrt{\lambda}|x|}e^{-cr}rdr+\frac{e^{-c\sqrt{\lambda}|x|}}{|x|^{d-2}}\log |x| 1_{|x|\geq 1}+\lambda^{\frac{\beta}{2}-1}\int_{\sqrt{\lambda}|x|}^\infty e^{-cr}r^{1-\beta}dr\right),
%\end{aligned}
%\end{equation}
%and we have
% \begin{equation}
%\int_{\R^d}\frac{F_\lambda(x)+F_\lambda^2(x)}{|x|^{d-2}}dx\les\left\{
%\begin{array}{ll}
%\lambda^{-\frac12}|\log \lambda| & \beta=3,\\
%|\log\lambda|^2 & \beta=4,\\
%1 & \beta>4.
%\end{array} \right.
%\end{equation}
%
%
%If $\beta>d$, then:
%\begin{equation}
%\begin{aligned}
%F_\lambda(x)\les & \lambda^{\frac{d}{2}-1}e^{-c\sqrt{\lambda}|x|}+\frac{1}{\lambda|x|^\beta}\int_0^{\sqrt{\lambda}|x|}e^{-cr}r^{d-1}dr1_{|x|\geq\frac12}\\
%+&1\wedge\left(\frac{e^{-c\sqrt{\lambda}|x|}}{|x|^{d-2}}+\frac{1}{\lambda|x|^\beta}\int_0^{\sqrt{\lambda}|x|}e^{-cr}rdr+\lambda^{\frac{\beta}{2}-1}\int_{\sqrt{\lambda}|x|}^\infty e^{-cr}r^{1-\beta}dr\right),
%\end{aligned}
%\end{equation}
%and we have
%\begin{equation}
%\int_{\R^d}\frac{F_\lambda(x)+F_\lambda^2(x)}{|x|^{d-2}}dx\les\left\{
%\begin{array}{ll}
%\lambda^{-\frac12} & d=3,\\
%|\log\lambda| & d=4,\\
%1 & d>4.
%\end{array} \right.
%\end{equation}

%\end{itemize}
\label{lem:gaussianConvolution}
\end{lemma}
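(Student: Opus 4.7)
The plan is to first collapse the inner Green-type double convolution to a single convolution via Lemma \ref{lem:fromWenjia}, then estimate the resulting two integrals by the same region-splitting technique used in Lemma \ref{lem:2ndMoment}, and finally integrate term by term in polar coordinates.

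First I would change variables $(y,z)\mapsto(u,v)=(y,y-z)$, so that $|z|=|u-v|$, and apply Lemma \ref{lem:fromWenjia} to the inner $u$-integral to get $\int \frac{e^{-\rho|u|}e^{-\rho|u-v|}}{|u|^{d-1}|u-v|^{d-1}}du\lesssim e^{-\rho|v|}\bigl(1+|v|^{-(d-2)}\bigr)$. Substituting $w=x-v/\sqrt{\lambda}$ (Jacobian $\lambda^{d/2}$) then separates the two pieces of $(1+|v|^{-(d-2)})$ and yields
\begin{equation}
F_{\lambda,\rho}(x)\lesssim \lambda^{d/2-1}\underbrace{\int |R|(w)\,e^{-c\sqrt{\lambda}|x-w|}dw}_{=:I_1'(x)}+\underbrace{\int \frac{|R|(w)\,e^{-c\sqrt{\lambda}|x-w|}}{|x-w|^{d-2}}dw}_{=:I_2'(x)}.
\end{equation}
Next, following the domain decomposition of Lemma \ref{lem:2ndMoment}, I would split $\R^d$ into $A=\{|w|\le|x|/2\}$, $B=\{|x-w|\le|x|/2\}$ and the complement $C$, using $|R|(w)\lesssim 1\wedge|w|^{-\beta}$ with $\beta\in(2,d)$. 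On $A$, $|x-w|\ge|x|/2$ provides the exponential $e^{-c\sqrt{\lambda}|x|}$ and $\int_A|R|(w)dw\lesssim |x|^{d-\beta}\mathbf 1_{|x|\ge 1}+\mathbf 1_{|x|<1}$ (using $\beta<d$); this accounts for the $\lambda^{d/2-1}e^{-c\sqrt{\lambda}|x|}|x|^{d-\beta}\mathbf 1_{|x|\ge 1}$ term from $I_1'$ and the $|x|^{-(\beta-2)}e^{-c\sqrt{\lambda}|x|}$ subterm from $I_2'$. On $B$, $|R|(w)\lesssim |x|^{-\beta}$ and the substitution $u=\sqrt{\lambda}(x-w)$ recasts the local $e^{-c\sqrt{\lambda}|u|}|u|^{k}$ integrals as $\int_0^{\sqrt{\lambda}|x|}e^{-cr}r^{k+d-1}dr$, producing the two integrals displayed in the statement. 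On $C$, $|w|\gtrsim |x|$ and $|x-w|\approx|w|$, so the integrand is dominated by $|w|^{-\beta}e^{-c\sqrt{\lambda}|w|}$, whose radial integral gives $\lambda^{\beta/2-1}e^{-c\sqrt{\lambda}|x|}$ for $I_1'$ and $\lambda^{\beta/2-1}\int_{\sqrt{\lambda}|x|}^\infty e^{-cr}r^{1-\beta}dr$ for $I_2'$. Finally, since $\beta>2$, $I_2'(x)\lesssim\int|R|(w)|w|^{-(d-2)}dw\lesssim 1$ uniformly, which justifies the universal cap $1\wedge(\cdots)$ in the $I_2'$ bound.

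For the integral estimate, I would plug the pointwise bound into $\int\frac{F_{\lambda,\rho}+F_{\lambda,\rho}^2}{|x|^{d-2}}dx$ and treat each of the six summands separately in polar coordinates, then rescale $t=\sqrt{\lambda}r$. A typical computation is
\begin{equation}
\int_{\R^d}\frac{\lambda^{\beta/2-1}e^{-c\sqrt{\lambda}|x|}}{|x|^{d-2}}dx\lesssim \lambda^{\beta/2-1}\int_0^\infty r\,e^{-c\sqrt{\lambda}r}dr=\lambda^{\beta/2-2};
\end{equation}
an analogous manipulation of each of the remaining terms gives the same rate under the hypothesis $\beta<d+2$. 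At $\beta=4$, one of the integrals collapses to the critical $\int_1^{1/\sqrt{\lambda}}r^{-1}dr\sim|\log\lambda|$ in the middle regime, which produces the logarithmic bound; for $\beta>4$ enough exponents become strictly summable to furnish an $O(1)$ answer. For the $F^2$ contribution I would use $(\sum_k a_k)^2\lesssim\sum_k a_k^2$ and estimate each squared term; the extra factor roughly doubles the exponents and produces bounds of the form $\lambda^{\beta-3}$, which, thanks to $\beta>2$ and the weight $|x|^{-(d-2)}$, are dominated by the linear $\lambda^{\beta/2-2}$ rate and hence absorbed into it.

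The main obstacle is the considerable bookkeeping required to keep track of all six (twelve after squaring) pointwise pieces across the three exponent regimes $\beta\lessgtr 4$, and in particular to isolate the precise sub-integral that becomes critical at $\beta=4$ and delivers the logarithm. No genuinely new estimate is needed beyond those already in the proof of Lemma \ref{lem:2ndMoment}, but the combinatorics of matching each summand with its correct regime, together with confirming that $F^2$ is subdominant for every $\beta\in(2,d)$, is where the main work lies.
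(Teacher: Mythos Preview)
Your approach is correct and essentially identical to what the paper indicates: the paper's own proof simply states that the argument follows that of Lemma~\ref{lem:2ndMoment} and Lemma~\ref{lem:fluctuationError}, with details omitted. Your use of Lemma~\ref{lem:fromWenjia} to reduce to the two pieces $I_1',I_2'$, followed by the three-region decomposition and term-by-term radial integration, reproduces exactly the structure of those lemmas adapted to the slower decay $|R|\lesssim 1\wedge|\cdot|^{-\beta}$ with $\beta\in(2,d)$.
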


\begin{proof}
The proof is similar to that of Lemma \ref{lem:2ndMoment} and \ref{lem:fluctuationError}. The details are not presented here.
\end{proof}

\begin{lemma}
Let $x_i\in \R^d,i=1,\ldots,4$, then under Assumption \ref{ass:mixing}
\begin{equation}
\begin{aligned}
&|\E\{V(x_1)V(x_2)V(x_3)V(x_4)\}-R(x_1-x_2)R(x_3-x_4)|\\
\leq &\Psi(|x_1-x_3|)\Psi(|x_2-x_4|)+\Psi(|x_1-x_4|)\Psi(|x_2-x_3|),
\end{aligned}
\end{equation}
where $\Psi(r)\les 1\wedge r^{-\beta}$ for any $\beta>0$.
\label{lem:34Moment}
\end{lemma}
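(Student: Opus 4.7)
My approach is to use the Ursell (fourth-cumulant) decomposition of the four-point function and estimate each piece via Assumption~\ref{ass:mixing}. First, I introduce the mean-zero products $\phi_{ij} := V(x_i)V(x_j) - R(x_i-x_j)$. The algebraic identity $V_1V_2V_3V_4 = (\phi_{12}+R_{12})(\phi_{34}+R_{34})$ together with $\E\{\phi_{ij}\}=0$ gives $\E\{V_1V_2V_3V_4\} - R_{12}R_{34} = \E\{\phi_{12}\phi_{34}\}$; applying the same identity to the other two 2+2 pairings and comparing yields the decomposition
\begin{equation*}
\E\{V_1V_2V_3V_4\} - R_{12}R_{34} = R_{13}R_{24} + R_{14}R_{23} + \kappa_4,
\end{equation*}
where $\kappa_4 := \E\{V_1V_2V_3V_4\} - R_{12}R_{34} - R_{13}R_{24} - R_{14}R_{23}$ is the joint fourth cumulant. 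By the triangle inequality it suffices to bound each of $|R_{13}R_{24}|$, $|R_{14}R_{23}|$, and $|\kappa_4|$ by a constant multiple of $\Psi(|x_1-x_3|)\Psi(|x_2-x_4|)+\Psi(|x_1-x_4|)\Psi(|x_2-x_3|)$.

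The two $R\cdot R$ terms are immediate: applying Assumption~\ref{ass:mixing} with $\phi_1=V(x_i)$, $\phi_2=V(x_j)$ (each mean zero and supported at a single site separated by distance $|x_i-x_j|$) yields $|R(x_i-x_j)| \lesssim \rho(|x_i-x_j|)$. Since $\rho$ decays faster than every polynomial, any $\Psi\gtrsim C\rho$ satisfying $\Psi(r)\lesssim 1\wedge r^{-\beta}$ for all $\beta>0$ is admissible; then $|R_{13}R_{24}|+|R_{14}R_{23}| \lesssim \Psi(|x_1-x_3|)\Psi(|x_2-x_4|)+\Psi(|x_1-x_4|)\Psi(|x_2-x_3|)$.

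For the cumulant, I would apply the 2+2 mixing inequality of Assumption~\ref{ass:mixing} to each of the three pairings: for $(ij)(kl)\in\{(12,34),(13,24),(14,23)\}$,
\begin{equation*}
|\E\{\phi_{ij}\phi_{kl}\}| \leq \rho\bigl(d(\{x_i,x_j\},\{x_k,x_l\})\bigr)\sqrt{\E\{\phi_{ij}^2\}\,\E\{\phi_{kl}^2\}},
\end{equation*}
with $\E\{\phi_{ij}^2\} \leq \E\{V^4\} < \infty$ by the moment bound in Assumption~\ref{ass:mixing}. Combined with the three representations $\kappa_4 = \E\{\phi_{ij}\phi_{kl}\} - R_{i'j'}R_{k'l'} - R_{i''j''}R_{k''l''}$ (one per pairing), this produces three upper bounds on $|\kappa_4|$; the plan is to use the pairing whose cross-separation $M_\pi$ is the largest and absorb $\rho(M_\pi)$ into the target RHS via the super-polynomial decay of $\rho$. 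The hard part will be the geometric case analysis verifying that one of the three pairings always produces a bound dominated by the RHS: when pairing $(12,34)$ itself has small cross-separation, the other pairings will force $d_{12}$ and $d_{34}$ to be large, automatically making $|R_{12}R_{34}|$ negligible; in the remaining degenerate configurations where three of the four points cluster and one is isolated, I would use a 3+1 split, applying Assumption~\ref{ass:mixing} to $V(x_i)$ against the centered triple $V_jV_kV_l - \E\{V_jV_kV_l\}$, which bounds $|\E\{V_1V_2V_3V_4\}|$ by $\rho(d(x_i,\{x_j,x_k,x_l\}))$ and suffices because $|R_{12}R_{34}|$ is again small in that regime.
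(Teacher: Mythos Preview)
The paper does not give its own proof of this lemma: it simply cites Lemma~2.3 of \cite{hairer2013random} and remarks that the hypothesis $\E\{V^6(x)\}<\infty$ is used there. Your plan---cumulant decomposition, direct bounds on $R_{13}R_{24}$ and $R_{14}R_{23}$ via the mixing coefficient, and a geometric case analysis on $\kappa_4$ combining $2{+}2$ and $3{+}1$ splits---is exactly the argument in that reference; in particular your observation that the $3{+}1$ split against the centered triple is where the sixth moment enters is the reason the paper flags $\E\{V^6\}<\infty$.

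One comment on the execution: the single $2{+}2$ bound $|\E\{\phi_{12}\phi_{34}\}|\le C\rho(M_1)$ with $M_1=\min(d_{13},d_{14},d_{23},d_{24})$ is \emph{not} by itself dominated by $\Psi(d_{13})\Psi(d_{24})+\Psi(d_{14})\Psi(d_{23})$ for any admissible $\Psi$ (take $d_{13}$ small and $d_{24}$ large), so the case analysis you flag as ``the hard part'' really is required, and the $3{+}1$ split is what rescues the degenerate configurations where three points cluster. With an appropriate choice such as $\Psi(r)\asymp \sqrt{\rho(r/3)}$ the pieces fit together; your outline is correct and matches the cited proof.
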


\begin{proof}
The proof could be found in Lemma 2.3. \cite{hairer2013random}, where $\E\{V^6(x)\}<\infty$ is used.
%
%Without loss of generality, assume that $|x_1-x_2|\leq |x_1-x_3|\leq |x_2-x_3|$, so by mixing property we have
%\begin{equation}
%\begin{aligned}
%|\E_\pi\{V(x_1)V(x_2)V(x_3)\}|\leq& \rho(|x_1-x_3|)\|V\|\|V^2\|\\
%=&\rho^\frac13(|x_1-x_3|)\rho^\frac13(|x_1-x_3|)\rho^\frac13(|x_1-x_3|)\|V\|\|V^2\|.
%\end{aligned}
%\end{equation}
%Note that $\rho(|x_1-x_3|)\leq \rho(|x_1-x_2|)$ and $\rho(|x_1-x_3|)\leq \rho(|x_2-x_3|/2)$, so
%\begin{equation}
%|\E_\pi\{V(x_1)V(x_2)V(x_3)\}|\leq
%\rho^\frac13(|x_1-x_3|)\rho^\frac13(|x_1-x_2|)\rho^\frac13(|x_2-x_3|/2)\|V\|\|V^2\|,
%\end{equation}
%thus we only have to choose $\Psi(r)=\rho^\frac13(r/2)$ to complete the proof.
\end{proof}

\begin{lemma}
When $\alpha\in (0,1)$, $\int_{\R^{2}} q_\eps(x)q_\eps(y)\frac{1}{|z+x-y|^\alpha}dxdy \to \frac{1}{|z|^\alpha}$ as $\eps \to 0$ for $z\neq 0$.
\label{lem:convInalpha}
\end{lemma}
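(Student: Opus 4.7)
The plan is to exploit that if $X,Y$ are independent $N(0,\eps)$ then $X-Y\sim N(0,2\eps)$, so Fubini collapses the double integral to a single convolution:
$$\int_{\R^2} q_\eps(x)q_\eps(y)\frac{1}{|z+x-y|^\alpha}\,dx\,dy=\int_\R q_{2\eps}(u)\frac{1}{|z+u|^\alpha}\,du=:I_\eps(z).$$
It then suffices to prove that this convolution of the approximate identity $q_{2\eps}$ with $u\mapsto |z+u|^{-\alpha}$ converges to the value at $u=0$ when $z\neq 0$. Using $\int q_{2\eps}=1$, write
$$I_\eps(z)-|z|^{-\alpha}=\int_\R q_{2\eps}(u)\bigl(|z+u|^{-\alpha}-|z|^{-\alpha}\bigr)du.$$

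I would then split the real line at a small threshold $\delta_0\in(0,|z|/2)$. On $\{|u|<\delta_0\}$ the function $u\mapsto |z+u|^{-\alpha}$ is continuous at $u=0$, so for any prescribed $\eta>0$ one may choose $\delta_0$ small enough that the integrand is bounded by $\eta$ in absolute value; since $q_{2\eps}$ is a probability density, this contributes at most $\eta$. On $\{|u|\geq\delta_0\}$ the Gaussian mass is exponentially small, but I must still control the singularity of $|z+u|^{-\alpha}$ at $u=-z$ (which sits inside the far region). I would handle this by a further split into the compact annulus $\delta_0\leq|u|\leq 2|z|$ and the tail $|u|>2|z|$. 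On the compact annulus, $\int|z+u|^{-\alpha}du<\infty$ since $\alpha<1$ makes the singularity at $-z$ locally integrable in $\R$, while $q_{2\eps}(u)\leq q_{2\eps}(\delta_0)=(4\pi\eps)^{-1/2}e^{-\delta_0^2/(4\eps)}$ vanishes faster than any polynomial in $\eps$; pulling this bound out kills the annulus contribution. On the tail $|u|>2|z|$ we have $|z+u|\geq|u|/2\geq|z|$, so $|z+u|^{-\alpha}\leq|z|^{-\alpha}$, and the integral is at most $|z|^{-\alpha}\Pb(|N(0,2\eps)|>2|z|)\to 0$.

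Combining these estimates, $\limsup_{\eps\to 0}|I_\eps(z)-|z|^{-\alpha}|\leq \eta$ for arbitrary $\eta>0$, giving the claim. The only subtle point is the singularity of $|z+u|^{-\alpha}$ at $u=-z$ lying in the far region, but since $-z$ is at macroscopic distance $|z|$ from the concentration point $u=0$ of $q_{2\eps}$ and $\alpha<1$ makes the singularity locally integrable in one dimension, the Gaussian decay at scale $|z|\gg\sqrt\eps$ easily dominates.
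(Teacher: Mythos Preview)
Your proof is correct. The reduction to a single integral via the convolution identity $q_\eps * q_\eps = q_{2\eps}$ is valid (Tonelli applies since the integrand is nonnegative), and the subsequent three-region split is a clean approximate-identity argument; the only potentially delicate point---the integrable singularity of $|z+u|^{-\alpha}$ at $u=-z$ sitting in the far region---is handled correctly by pairing local integrability in $\R$ (since $\alpha<1$) with the uniform Gaussian bound $q_{2\eps}(u)\leq q_{2\eps}(\delta_0)\to 0$ on the compact annulus.

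The paper takes a different route: it does not collapse to a one-dimensional convolution but instead changes variables to $w=z+x-y$, keeps the double integral $\int q_\eps(w+y-z)q_\eps(y)|w|^{-\alpha}\,dy\,dw$, and splits first on $|w|\lessgtr |z|/2$. The piece $|w|>|z|/2$ is handled by rescaling both variables by $\sqrt{\eps}$ and applying dominated convergence; the piece $|w|<|z|/2$ is further split on $|y|\lessgtr|z|/4$ so that in each subregion one of the two Gaussian factors is bounded by $q_\eps(|z|/4)\to 0$. Your approach is shorter and more transparent because the Gaussian convolution identity removes one integration variable at the outset, reducing everything to the textbook statement that mollification of a locally integrable function converges at points of continuity. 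The paper's approach, while slightly longer, has the minor advantage that it does not rely on any special algebraic property of the mollifier and would adapt more readily to a general approximate identity in place of $q_\eps$.
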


\begin{proof}
By change of variables, we write
\begin{equation}
\begin{aligned}
\int_{\R^{2}} q_\eps(x)q_\eps(y)\frac{1}{|z+x-y|^\alpha}dxdy=&\int_{\R^2} q_\eps(w+y-z)q_\eps(y)\frac{1}{|w|^\alpha}dydw\\
=&\left(\int_{|w|<\frac{|z|}{2}}+\int_{|w|>\frac{|z|}{2}}\right)q_\eps(w+y-z)q_\eps(y)\frac{1}{|w|^\alpha}dydw\\
=&(i)+(ii),
\end{aligned}
\end{equation}
and since
\begin{equation}
(ii)=\int_{|\sqrt{\eps}w+z|>\frac{|z|}{2}}q(w+y)q(y)\frac{1}{|\sqrt{\eps}w+z|^\alpha}dydw,
\end{equation}
by the dominated convergence theorem, we have $(ii)\to \frac{1}{|z|^\alpha}$ as $\eps \to 0$. For $(i)$, we write
\begin{equation}
(i)=\left(\int_{|w|<\frac{|z|}{2},|y|>\frac{|z|}{4}}+\int_{|w|<\frac{|z|}{2},|y|<\frac{|z|}{4}}\right)q_\eps(w+y-z)q_\eps(y)\frac{1}{|w|^\alpha}dydw.
\end{equation}
For the first term, use $q_\eps(|z|/4)$ to bound $q_\eps(y)$, then integrate in $y,w$; for the second term, use $q_\eps(|z|/4)$ to bound $q_\eps(w+y-z)$, then integrate in $y,w$. Since $q_\eps(|z|/4)\to 0$ as $\eps \to 0$, we have $(i)\to 0$. The proof is complete.
\end{proof}

\begin{lemma}
Assume $\alpha\in (0,1)$, then $\int_{\R^2} q_{\eps_1}(x_1+y_1)q_{\eps_2}(x_2+y_2)|y_1-y_2|^{-\alpha}dy_1dy_2\leq C|x_1-x_2|^{-\alpha}$ for some uniform constant $C$.
\label{lem:fromJiansong}
\end{lemma}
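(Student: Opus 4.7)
The plan is to reduce the double integral to a one-dimensional Gaussian convolution with a Riesz-type potential, then to bound that convolution uniformly in the variance parameter by a simple case split.

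First I would change variables $u_i = x_i + y_i$, so that $y_1 - y_2 = u_1 - u_2 - z$ with $z := x_1 - x_2$, and the integrand becomes $q_{\eps_1}(u_1) q_{\eps_2}(u_2) |u_1 - u_2 - z|^{-\alpha}$. Integrating out the ``center of mass'' direction (equivalently, using that if $U_i \sim N(0,\eps_i)$ are independent then $U_1 - U_2 \sim N(0,\eps_1+\eps_2)$, so $q_{\eps_1} * q_{\eps_2} = q_{\eps_1+\eps_2}$), the claim reduces to showing
\begin{equation*}
I(z,\eps) := \int_{\R} q_\eps(w)\, |w-z|^{-\alpha}\, dw \les |z|^{-\alpha}
\end{equation*}
uniformly in $\eps := \eps_1+\eps_2 > 0$ and $z \neq 0$.

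Next I would shift $w \mapsto w + z$ and split at the threshold $|w| = |z|/2$. On the far region $\{|w| \geq |z|/2\}$, use $|w|^{-\alpha} \leq 2^\alpha |z|^{-\alpha}$ together with $\int q_\eps(w+z)dw = 1$ to get a contribution at most $2^\alpha |z|^{-\alpha}$. On the near region $\{|w| < |z|/2\}$, one has $|w+z|\geq |z|/2$, so $q_\eps(w+z) \leq q_\eps(|z|/2) = (2\pi\eps)^{-1/2} e^{-z^2/(8\eps)}$; since $\alpha \in (0,1)$ gives $\int_{|w|<|z|/2} |w|^{-\alpha} dw \les |z|^{1-\alpha}$, this contribution is controlled by a constant times $|z|^{-\alpha} \cdot \tau e^{-\tau^2/8}$ where $\tau := |z|/\sqrt{\eps}$.

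The proof concludes once one observes that $\tau \mapsto \tau e^{-\tau^2/8}$ is bounded on $[0,\infty)$, yielding a constant independent of $\eps$. No real obstacle is anticipated: the argument is essentially a standard uniform estimate for the heat semigroup acting on the Riesz-type potential $|\cdot|^{-\alpha}$, and the hypothesis $\alpha \in (0,1)$ enters only to ensure local integrability of that potential on $\R$.
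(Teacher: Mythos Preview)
Your argument is correct. The reduction via $u_i = x_i + y_i$ and the convolution identity $q_{\eps_1}*q_{\eps_2}=q_{\eps_1+\eps_2}$ cleanly brings the question down to the one-dimensional estimate $\int_{\R} q_\eps(w+z)|w|^{-\alpha}dw \les |z|^{-\alpha}$, and your near/far split at $|w|=|z|/2$ handles this uniformly in $\eps$; the scaling variable $\tau=|z|/\sqrt{\eps}$ makes the uniformity transparent.

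For comparison, the paper does not give its own proof here: it simply cites Lemma~A.2 of \cite{hu2011feynman}. So you have supplied a self-contained elementary argument where the paper defers to an external reference. Your proof is in fact close in spirit to the one in \cite{hu2011feynman}, which also hinges on the scaling behavior of the heat kernel against the Riesz potential, but your presentation via the explicit split and the bounded function $\tau e^{-\tau^2/8}$ is particularly clean.
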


\begin{proof}
See Lemma A.2. in \cite{hu2011feynman}.
\end{proof}

\begin{lemma}
When $d\geq 3$ and $\alpha\in (0,2)$, \begin{equation}
\lim_{\eps\to 0}\frac{1}{\eps^\alpha}\int_0^t\int_0^t \Pb(|B_s-B_u|\leq \eps)dsdu=0.
\end{equation}
\label{lem:BsMinusBu}
\end{lemma}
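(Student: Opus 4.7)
The plan is straightforward: reduce to a one-dimensional integral via the stationarity of Brownian increments, split at the natural scale $r = \eps^2$, and use the Gaussian density bound.

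First I would use the fact that $B_s - B_u$ is distributed as $B_{|s-u|}$ and write
\begin{equation*}
\int_0^t\int_0^t \Pb(|B_s-B_u|\leq \eps)\,dsdu = 2\int_0^t (t-r)\,\Pb(|B_r|\leq \eps)\,dr \leq 2t \int_0^t \Pb(|B_r|\leq \eps)\,dr,
\end{equation*}
so the problem reduces to showing $\int_0^t \Pb(|B_r|\leq \eps)\,dr \lesssim \eps^2$ when $d\geq 3$.

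Next I would split this integral at $r=\eps^2$. On $[0,\eps^2]$ use the trivial bound $\Pb(|B_r|\leq \eps)\leq 1$, giving a contribution of order $\eps^2$. On $[\eps^2,t]$ use the density bound
\begin{equation*}
\Pb(|B_r|\leq \eps) = \int_{|x|\leq \eps} \frac{e^{-|x|^2/(2r)}}{(2\pi r)^{d/2}}\,dx \leq \frac{C_d\,\eps^d}{r^{d/2}},
\end{equation*}
and exploit $d\geq 3$ (so $d/2>1$) to integrate
\begin{equation*}
\eps^d \int_{\eps^2}^t r^{-d/2}\,dr \lesssim \eps^d \cdot \eps^{2-d} = \eps^2,
\end{equation*}
where the bound $\int_{\eps^2}^\infty r^{-d/2}dr \lesssim \eps^{2-d}$ uses exactly that $d\geq 3$. (For $d=3$ the integral is $2\eps^{-1}$, for $d=4$ it is $\eps^{-2}$, and for $d>4$ it is $\tfrac{2}{d-2}\eps^{2-d}$; all give the same final order.)

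Combining the two pieces yields $\int_0^t \Pb(|B_r|\leq \eps)\,dr \lesssim \eps^2$, so dividing by $\eps^\alpha$ gives a bound of order $\eps^{2-\alpha}$, which tends to $0$ as $\eps\to 0$ since $\alpha<2$. There is no real obstacle here; the only point to keep in mind is the sharpness of the condition $d\geq 3$: in $d\leq 2$ the integral $\int_{\eps^2}^t r^{-d/2}dr$ either diverges logarithmically or worsens, so the clean scaling $\eps^2$ either picks up a log or fails.
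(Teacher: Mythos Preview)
Your proof is correct and cleaner than the paper's. The paper proceeds by an explicit change of variables: it writes $\Pb(|B_s-B_u|<\eps)=\int_{|x|<\eps}q_{|s-u|}(x)\,dx$, then substitutes $\lambda=|x|^2/(2u)$ to convert the time integral into a Gamma-type integral $\int \lambda^{d/2-2}e^{-\lambda}\,d\lambda\cdot |x|^{2-d}$, and after further manipulation splits the result into three pieces of orders $\eps^{d-\alpha}$, $\eps^{2-\alpha}$, and $\eps^{4-\alpha}+\eps^{d-\alpha}$. Your route is more direct: you use only the stationarity of increments, the trivial bound on $[0,\eps^2]$, and the uniform density bound $\Pb(|B_r|\le\eps)\le C_d\eps^d r^{-d/2}$ on $[\eps^2,t]$, arriving at the same dominant order $\eps^{2-\alpha}$ with no special-function machinery. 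What the paper's computation buys is slightly finer information (it exhibits the subleading terms $\eps^{d-\alpha}$ and $\eps^{4-\alpha}$ explicitly), but for the lemma as stated your argument suffices and is shorter.
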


\begin{proof}
By explicit calculation, we have
\begin{equation}
\begin{aligned}
&\frac{1}{\eps^\alpha}\int_0^t\int_0^t \Pb(|B_s-B_u|<\eps)dsdu\\
=&\frac{1}{(\pi)^{\frac{d}{2}}\eps^\alpha}\int_0^t \int_{|x|<\eps}\int_{\frac{|x|^2}{2s}}^\infty \lambda^{\frac{d}{2}-2}e^{-\lambda} \frac{1}{|x|^{d-2}}d\lambda dxds\\
=&\frac{1}{(\pi)^{\frac{d}{2}}\eps^\alpha}\int_0^\infty \int_{\R^d}\int_{0}^\infty 1_{|x|<\eps}1_{|x|^2<2\lambda s}1_{s<t}\lambda^{\frac{d}{2}-2}e^{-\lambda} \frac{1}{|x|^{d-2}}d\lambda dxds\\
=&\frac{1}{(\pi)^{\frac{d}{2}}\eps^\alpha}\int_0^\infty d\lambda \int \lambda^{\frac{d}{2}-2}e^{-\lambda}\left(\lambda s1_{\lambda<\frac{\eps^2}{2s}}+\frac{1}{2}\eps^21_{\lambda>\frac{\eps^2}{2s}}\right)1_{s<t}ds\\
=&\frac{1}{(\pi)^{\frac{d}{2}}\eps^\alpha}\int_0^\infty d\lambda  \lambda^{\frac{d}{2}-2}e^{-\lambda}\left(\frac{\lambda t^2}{2}1_{\frac{\eps^2}{2\lambda}>t}+\frac{\eps^2t}{2}1_{\frac{\eps^2}{2\lambda}<t}-\frac{\eps^4}{8\lambda}1_{\frac{\eps^2}{2\lambda}<t}\right)=(i)+(ii)+(iii).
\end{aligned}
\end{equation}
We check that $(i)\sim \eps^{d-\alpha}$, and $(ii)\sim \eps^{2-\alpha}$, $(iii)\sim \eps^{4-\alpha}+\eps^{d-\alpha}$, so the proof is complete.
\end{proof}

%\bibliographystyle{siam}
%\bibliography{ref_kv_pde_corrector}

\end{document}